\theoremstyle{definition}
\newtheorem{remark}{Remark}[section]
\newtheorem{esempio}{Example}[section]
\newtheorem{notazione}{Notation}[section]
\newtheorem{assumption}{Assumption}[section]
\newtheorem{claim}{Claim}[section]
\theoremstyle{plain}
\newtheorem{definizione}{Definition}[section]
\newtheorem{teorema}{Theorem}[section]
\newtheorem{proposizione}{Proposition}[section]
\newtheorem{lemma}{Lemma}[section]
\newtheorem{corollario}{Corollary}[section]
\newcommand{\numberset}{\mathbb}
\newcommand{\C}{\numberset{C}}
\newcommand{\R}{\numberset{R}}
\newcommand{\N}{\numberset{N}}
\newcommand{\Z}{\numberset{Z}}
\DeclarePairedDelimiter{\abs}{\lvert}{\rvert}
\DeclarePairedDelimiter{\norma}{\lVert}{\rVert}
\let\oldabs\abs
\def\abs{\@ifstar{\oldabs}{\oldabs*}}
\let\oldnorma\norma
\def\norma{\@ifstar{\oldnorma}{\oldnorma*}}
\title{Torsion and Linking number for a surface diffeomorphism}
\author{\hspace{-1cm}{ ANNA FLORIO${\,
}^{1}$}
\vspace{0.3cm}
\\
\hspace{-1cm}${\ }^{1}$ Laboratoire de Mathématiques d'Avignon, Avignon Université, \\
\hspace{-1cm} 84018 Avignon, France\\}
\date{ }
\begin{document}

\maketitle

\begin{abstract}
\noindent For a ${\cal{C}}^1$ diffeomorphism $f:\R^2\rightarrow\R^2$ isotopic to the identity, we prove that for any value $l\in\R$ of the linking number at finite time of the orbits of two points there exists at least a point whose torsion at the same finite time equals $l\in\R$. As an outcome, we give a much simplier proof of a theorem by Matsumoto and Nakayama concerning torsion of measures on $\mathbb{T}^2$. In addition, in the framework of twist maps, we generalize a known result concerning the linking number of periodic points: indeed, we estimate such value for any couple of points for which the limit of the linking number exists. 
\end{abstract}

\section{Introduction}
Let $S$ be a parallelizable Riemannian surface and let $(f_t)_{t\in[0,1]}$ be an isotopy in $\text{Diff}^1(S)$ joining the identity $Id_S$ to $f_1=f$. The tangent bundle inherits a dynamics through the differential $Df_t:TS\rightarrow TS$.\\ 
\noindent We are interested in the \emph{rotation number} which was first introduced by D. Ruelle in \cite{ruelle}. Roughly speaking, Ruelle's rotation number (which we will call \emph{torsion} from now on) refers to the asymptotic velocity at which the tangent-to-$S$ vectors turn. 
\noindent If $\xi\in T_x S\setminus\{0\}$, then $Torsion_T((f_t)_t,x,\xi)$ is the variation between $0$ and $T$ of a continuous determination of the angle function associated to $Df_t(x)\xi$, $t\in[0,T]$, divided by $T$.
\noindent The \emph{torsion at the orbit of $x$} is the limit for $T\rightarrow +\infty$ of $Torsion_T((f_t)_t,x,\xi)$, whenever it exists. Hence, when defined, it represents the asymptotic rotational behavior of vectors in the tangent space of $x$. Ruelle proved that for any $f$-invariant probability measure with compact support, for almost every point, the torsion exists for all vectors $\xi\in T_x S\setminus\{0\}$. \\
\indent Besides, in the setting of the plane $\R^2$, for any $x,y\in\R^2$, $x\neq y$, the linking number of the orbits of $x$ and $y$ is the asymptotic angular velocity of the vector $f_t(y)-f_t(x)$. We first introduce $Linking_T((f_t)_t,x,y)$ as the variation between $0$ and $T$ of a continuous determination of the angle associated to $f_t(y)-f_t(x)$, $t\in[0,T]$, divided by the length of the time interval.\\
\noindent Similarly to what done for the torsion, the linking number of a pair of orbits is, whenever it exists, the limit for $T\rightarrow +\infty$ of the previous quantity. For any $f$-invariant probability measure with compact support, for almost every pair of points, the linking number exists (see Remark \ref{remark esistenza linking misura}).\\

\indent Besides Ruelle's article, J. Mather in \cite{matherexistence}, \cite{matheramount} and S. Angenent in \cite{angenent} have particularly worked on null torsion sets through a more variational point of view. Also S. Crovisier in \cite{crovisier} worked in the same setting with a topological approach. F. B\'eguin and Z. Boubaker in \cite{beguin} gave conditions which assure the existence of orbits with non zero torsion and used the concept of linking number presented above.\\
In their article, Beguin and Boubaker raise the following question: for a given isotopy $(f_t)_t:[0,1]\rightarrow \text{Diff}^1(\R^2)$ starting at the identity, assuming that the linking number of two points $x,y$ is non-zero, does there exist at least a point $z$ on the segment connecting $x$ and $y$ such that its torsion is also non-zero? Their answer is positive but we give here a more precise result.\\
\noindent We state the following
\begin{teorema}\label{thm intro 1}
Let $(F_t)_{t\in[0,1]}$ be an isotopy in $\text{Diff}^1(\R^2)$ joining $Id_{\R^2}$ to $F_1=F$. Assume that there exist two points $x,y\in\R^2$, $x\neq y$ such that
$$
Linking_1((F_t)_t,x,y)=l\in\R.
$$
Then there exists a point $z\in[x,y]$ so that
$$
Torsion_1\left((F_t)_t,z,y-x\right)=l.
$$
\end{teorema}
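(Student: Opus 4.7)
The plan is to reduce both quantities to the variation of continuous real-valued lifts of arguments on a square, and then invoke the intermediate value theorem on the torsion as a function of the base point. First, applying the fundamental theorem of calculus along the segment $[x,y]$ gives, for every $t\in[0,1]$,
$$F_t(y)-F_t(x)\;=\;\int_0^1 DF_t\bigl(x+s(y-x)\bigr)(y-x)\,ds.$$
Setting $V(s,t):=DF_t(x+s(y-x))(y-x)$, the map $V\colon[0,1]^2\to\R^2\setminus\{0\}$ is continuous and nowhere zero (since each $DF_t$ is invertible) and satisfies $V(s,0)=y-x$. Because $[0,1]^2$ is simply connected, I lift $\arg V$ to a continuous $\tilde\omega\colon[0,1]^2\to\R$ with $\tilde\omega(s,0)=\arg(y-x)$; by definition $Torsion_1((F_t)_t,x+s(y-x),y-x)=\tilde\omega(s,1)-\arg(y-x)$. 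Likewise, lifting $\arg(F_t(y)-F_t(x))$ to a continuous $\psi$ on $[0,1]$ with $\psi(0)=\arg(y-x)$ yields $Linking_1((F_t)_t,x,y)=\psi(1)-\psi(0)=l$.

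Since $s\mapsto\tilde\omega(s,1)$ is continuous on $[0,1]$, the intermediate value theorem reduces the problem to the inclusion
$$\min_{s\in[0,1]}\tilde\omega(s,1)\;\le\;\psi(1)\;\le\;\max_{s\in[0,1]}\tilde\omega(s,1).$$
I would prove the stronger statement that $\psi(t)\in[m(t),M(t)]$ for every $t\in[0,1]$, where $m(t):=\min_s\tilde\omega(s,t)$ and $M(t):=\max_s\tilde\omega(s,t)$. The analytic engine is obtained by writing $W(t):=F_t(y)-F_t(x)=\int_0^1 V(s,t)\,ds$ and splitting into real and imaginary parts after multiplying by $e^{-i\psi(t)}$:
$$|W(t)|=\int_0^1|V(s,t)|\cos\!\bigl(\tilde\omega(s,t)-\psi(t)\bigr)\,ds,\qquad 0=\int_0^1|V(s,t)|\sin\!\bigl(\tilde\omega(s,t)-\psi(t)\bigr)\,ds.$$

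For the upper inclusion I argue by contradiction: assume $\psi(t)>M(t)$ somewhere and set $T:=\inf\{t\in(0,1]\colon\psi(t)>M(t)\}$. Continuity of $\psi$ and $M$, together with $\psi(0)=M(0)=\arg(y-x)$, forces $\psi(T)=M(T)$, attained at some $s_T\in[0,1]$, and for $t$ in a right-neighborhood of $T$ one has $\tilde\omega(s,t)-\psi(t)<0$ for every $s\in[0,1]$. When the amplitude $M(T)-m(T)$ is less than $\pi$, continuity keeps the values $\tilde\omega(s,t)-\psi(t)$ inside $(-\pi,0)$ on a right-neighborhood of $T$, so $\sin(\tilde\omega-\psi)<0$ strictly, which contradicts the vanishing of the sine integral. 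The lower inclusion is treated symmetrically.

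The main obstacle I anticipate is precisely the regime where $M(t)-m(t)\ge\pi$, since then the pointwise sign analysis of $\sin(\tilde\omega-\psi)$ breaks down: the sine changes sign within the range and the integrals can balance without forcing $\tilde\omega(s,t)=\psi(t)$ at a small scale. To cover this case I would study the closed zero set $Z:=\{(s,t)\in[0,1]^2\colon\tilde\omega(s,t)=\psi(t)\}$, which contains the entire bottom edge $\{t=0\}$, and combine the integral identity along $t\in[0,1]$ with a continuation/connectedness argument to force a connected component of $Z$ to meet the top edge $\{t=1\}$. The heart of this step is that the continuous lifts $\tilde\omega$ and $\psi$ on the simply connected square, together with the constraint $W=\int V$, prevent $Z$ from being confined to a strip $\{t<1\}$ without violating the vanishing of the sine integral at some intermediate slice.
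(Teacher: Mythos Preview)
Your approach via the integral identity $W(t)=\int_0^1 V(s,t)\,ds$ and the resulting vanishing of $\int|V|\sin(\tilde\omega-\psi)\,ds$ is genuinely different from the paper's, and it dispatches the small-amplitude regime $M(t)-m(t)<\pi$ cleanly. But the large-amplitude case is not a technicality, and your connectedness sketch does not close it. The vanishing of the sine integral at a given time slice only forces the existence of some $s$ with $\tilde\omega(s,t)-\psi(t)\in\pi\Z$, not with $\tilde\omega(s,t)-\psi(t)=0$: once $M(t)-m(t)\ge\pi$, the level sets $\{\phi=k\pi\}$ for different $k$ can coexist on the slice, and nothing you have written prevents the $\{\phi=0\}$ component from dying out while a $\{\phi=\pm\pi\}$ component takes over in satisfying the integral constraint. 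Your final assertion that the integral identity ``prevents $Z$ from being confined to $\{t<1\}$'' is essentially a restatement of the theorem, and you have not supplied a mechanism for it.

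The paper's proof confronts this difficulty by a completely different, topological rather than analytic, route. It argues by contradiction (assume all torsions along $[x,y]$ are strictly below $l$), then modifies the isotopy by composing with a time-dependent rotation about $x$ so that $x$ becomes a fixed point, every torsion becomes negative, and the linking becomes positive. It then lifts the image curve $s\mapsto H_1(z(s))$ to polar coordinates centered at $x$ --- that is, to the universal cover of $\R^2\setminus\{x\}$ --- where the lift becomes a simple arc in a half-plane. Closing that arc with two straight segments yields a simple closed piecewise-regular curve to which the Turning Tangent Theorem (Hopf's Umlaufsatz) applies: the total turning of the tangent must equal $2\pi$. Comparing this with the turning predicted by the negative-torsion hypothesis (via a technical proposition bounding by $\pi$ the discrepancy between angle lifts before and after a local orientation-preserving diffeomorphism) produces the contradiction. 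This geometric machinery, which occupies most of Section~3 of the paper, is what stands in for your missing large-amplitude argument.
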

\noindent At first sight, this result could recall a mean value theorem but the arguments and the strategies needed in the proof (which composes most of the article) are much more sophisticated and subtle.\\
\noindent First of all, through continuous modifications of the isotopy, we bring ourselves in a rotation frame of reference, reducing then the discussion to the case $l=0$.\\
\noindent In order to avoid self-intersections of the curve, a passage to the universal covering of the punctured plane is required: the strategy in doing so is using a polar coordinate frame, with respect to which one of the endpoints of the segment coincides with the singularity.\\
\noindent Finally, we carefully study the behavior of points in a neighborhood of the singularity $x$, the point previously blown up which corresponds to the origin of the polar coordinate framework. We then apply the Turning Tangent Theorem (see \cite{docarmo}, Chapter 4, Section 5).\\
\indent Passing to asymptotic quantities, we then prove the existence of $f$-invariant Borel probability measures $\mu$ whose torsion, i.e. $\int_S Torsion((f_t)_t,x)d\mu(x)$, equals $l\in\R$, where now $l$ is the asymptotic linking number of the orbits of two points.\\
\noindent As an outcome, for a ${\cal{C}}^1$ diffeomorphism $f$ over $\mathbb{T}^2$, we prove the existence of a $f$-invariant Borel probability measure of null torsion on the torus.
\noindent This result has already been proved by S. Matsumoto and H. Nakayama in \cite{matsumoto}, but our setting provides a much easier proof and requires only a ${\cal{C}}^1$ regularity of $f$.\\

\indent We then focus our attention on dynamical systems of specific interest and we analyse (positive) twist maps. The framework is the annulus $\mathbb{A}=\mathbb{T}\times\R$ on which we consider an isotopy $(f_t)_{t\in[0,1]}$ in $\text{Diff}^1(\mathbb{A})$ joining $Id_{\mathbb{A}}$ to $f_1=f$ and we assume that $f:\mathbb{A}\rightarrow\mathbb{A}$ is a ${\cal{C}}^1$ positive twist map. That is to say, for any lift $F:\R^2\rightarrow\R^2$ of the map $f$ and for any $x\in\R$ the function
$$
y\mapsto p_1\circ F(x,y)
$$
is an increasing diffeomorphism of $\R$, where $p_1:\R^2\rightarrow\R$ denotes the first coordinate projection.\\
\indent The interest for twist maps has largely spread all along the literature (see \cite{lecalvezproprietes}, \cite{matherexistence}, \cite{mathervariational}, \cite{matherglancing} and \cite{moser}) and several authors have studied their connection with the notion of torsion.\\
\noindent The intuitive geometric idea that the deviation property must oblige the vertical vector of each tangent space turning to the right is translated into a negative torsion at finite time with respect to the vector $(0,1)$ (according to a counterclockwise orientation of the tangent plane). This is already remarked in other works, such as \cite{crovisier} and \cite{lecalvezproprietes}. Anyway, we give a better estimation both for torsion and for linking number.
\begin{teorema}\label{thm intro 2}
Let $f:\mathbb{A}\rightarrow\mathbb{A}$ be a ${\cal{C}}^1$ positive twist map and let $(f_t)_{t\in[0,1]}$ be an isotopy in $\text{Diff}^1(\mathbb{A})$ joining the identity to $f$. Let $(0,1)\in T_{z}\mathbb{A}\setminus\{0\}$ be the positive vertical unitary tangent vector which lies in the tangent space of every point $z\in\mathbb{A}$. Then, for any $z\in\mathbb{A}$ and for any $n\in\N,n\neq 0$, it holds
$$
Torsion_n((f_t)_t,z,(0,1))\in(-\pi,0).
$$
\end{teorema}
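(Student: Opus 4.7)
The plan is to decompose the total angular variation on $[0,n]$ into $n$ per-step contributions and bound each of them by the twist condition. Extending the isotopy via $f_{k+s}:=f_s\circ f^k$ for $k\in\mathbb{N}$, $s\in[0,1]$, the chain rule yields $Df_{k+s}(z)=Df_s(f^k(z))\circ Df^k(z)$, so that
\[
Df_{k+s}(z)(0,1)=Df_s(f^k(z))\,v_k, \qquad v_k:=Df^k(z)(0,1).
\]
If $\Theta_k$ denotes the variation of a continuous determination of the argument of $s\mapsto Df_s(f^k(z))\,v_k$ on $[0,1]$, then
\[
n\cdot Torsion_n((f_t)_t,z,(0,1))=\sum_{k=0}^{n-1}\Theta_k,
\]
and it suffices to establish $\Theta_k\in(-\pi,0)$ for every $k$.

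The geometric core of the proof is this per-step estimate. For $k=0$, the path $\gamma_0(s):=Df_s(z)(0,1)$ starts on the positive $y$-axis (argument $\pi/2$) and ends at $Df(z)(0,1)=(b,d)$ with $b>0$, i.e.\ in the open right half-plane, whose principal argument lies in $(-\pi/2,\pi/2)$. The continuous argument of $\gamma_0$ therefore strictly decreases and lands in $(-\pi/2,\pi/2)$, yielding $\Theta_0\in(-\pi,0)$: the value $0$ is excluded because the endpoint is not a positive multiple of $(0,1)$, and the value $-\pi$ is excluded because the endpoint does not lie on the negative $y$-axis. For $k\geq 1$, the same scheme applies with $v_k$ in place of $(0,1)$ and $f^k(z)$ in place of $z$: the positive twist at $f^k(z)$ ensures that the image $Df(f^k(z))v_k$ sits on the ``correct'' side of $v_k$, ruling out $\Theta_k\geq 0$, while a clockwise winding of $\pi$ or more would force $\gamma_k$ to cross the direction $-v_k$ at some time $s_0\in(0,1)$, contradicting the endpoint constraint coming from the twist together with the invertibility of $Df_{s_0}(f^k(z))$.

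Summing the per-step bounds gives $n\cdot Torsion_n\in(-n\pi,0)$, and dividing by $n$ delivers the conclusion. The main obstacle is the per-step estimate for $k\geq 1$: the twist condition is phrased in terms of the vertical vector $(0,1)$ and not of the arbitrary vector $v_k$. Making the argument rigorous requires translating the twist at $f^k(z)$ into a winding constraint on $\gamma_k$, which rests on the fact that the iterates $(v_k)$ remain in an invariant cone-like region under successive positive-twist matrices, together with the continuity of the isotopy path in $\text{Diff}^1(\mathbb{A})$, so that the clockwise rotation in each unit interval stays strictly less than $\pi$.
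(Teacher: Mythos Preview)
Your decomposition $n\cdot Torsion_n=\sum_{k=0}^{n-1}\Theta_k$ is correct, but the central claim that each $\Theta_k\in(-\pi,0)$ is false for $k\geq 1$, so the strategy cannot be completed as stated. Here is a concrete obstruction. Take a positive twist map whose differential at $z$ satisfies $Df(z)(0,1)=(1,0)$ (e.g.\ $Df(z)=\begin{pmatrix}1&1\\-1&0\end{pmatrix}$), so that $v_1=(1,0)$ sits at angle $-\pi/2$ from $(0,1)$; and whose differential at $f(z)$ is $Df(f(z))=\begin{pmatrix}1&\epsilon\\1&1\end{pmatrix}$ for some small $\epsilon>0$. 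This is a legitimate positive-twist differential (entry $(1,2)$ positive, determinant $1-\epsilon>0$), it sends $(0,1)$ to $(\epsilon,1)$ at angle $\approx -\epsilon$, and it sends $v_1=(1,0)$ to $(1,1)$ at angle $-\pi/4$. Tracking continuously with the Le~Calvez isotopy (for which $Df_t(f(z))(0,1)$ stays in the right half-plane), one finds that the lift of the angle of $Df_t(f(z))v_1$ goes from $-\pi/2$ to $-\pi/4$, so $\Theta_1=\pi/4>0$. The theorem still holds in this example ($\Theta_0+\Theta_1=-\pi/2+\pi/4=-\pi/4$), but not via a per-step bound. In fact the paper's Proposition~4.2 shows that for an arbitrary vector $\xi$ one only has $Torsion_1((f_t)_t,\bar z,\xi)\in(-2\pi,\pi)$, and both endpoints of that interval are essentially sharp.

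There is a second, more easily repaired gap in the $k=0$ step: from the endpoints alone you cannot conclude that ``the continuous argument of $\gamma_0$ strictly decreases'', since for an arbitrary isotopy $\gamma_0$ may wind several times before reaching $Df(z)(0,1)$. The paper handles this by first invoking independence of torsion from the isotopy (Proposition~2.7) and then choosing Le~Calvez's isotopy, in which every $f_t$ is itself a positive twist map; this forces $Df_t(z)(0,1)$ to stay in the open right half-plane for all $t\in(0,1]$, pinning $\Theta_0$ in $(-\pi,0)$. For the inductive step the paper does \emph{not} bound $\Theta_k$ individually; instead it uses that the map $s\mapsto W_{\bar z}(s,t)$ is an increasing homeomorphism commuting with $s\mapsto s+\pi$ (Proposition~2.2), so that the evolution of $Df(\bar z)(0,1)$ over the next $n$ steps is sandwiched between those of $(0,1)$ and $(0,-1)$ at $f(\bar z)$, to which the induction hypothesis applies directly.
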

\noindent As a consequence of this first bound, it follows
\begin{corollario}\label{cor intro 2}
Let $f:\mathbb{A}\rightarrow\mathbb{A}$ be a ${\cal{C}}^1$ positive twist map and let $(f_t)_{t\in[0,1]}$ be an isotopy in $\text{Diff}^1(\mathbb{A})$ joining the identity to $f_1=f$. Then, for any point $z\in\mathbb{A}$ for which the limit of the torsion exists, it holds
$$
Torsion((f_t)_t,z)\in[-\pi,0].
$$
\end{corollario}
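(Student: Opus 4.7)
The plan is to obtain the corollary as a direct asymptotic consequence of Theorem \ref{thm intro 2}. By definition, the torsion $Torsion((f_t)_t,z)$ at a point $z$, when it exists, is the limit for $T\to+\infty$ of $Torsion_T((f_t)_t,z,\xi)$, and its value is independent of the choice of $\xi\in T_z\mathbb{A}\setminus\{0\}$ (this is a standard fact about Ruelle's rotation number, already used in the paper when discussing the definition). Therefore, I would specialize the computation to the vertical tangent vector $\xi=(0,1)$, which is precisely the vector to which Theorem \ref{thm intro 2} applies.

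The first step is to observe that, since the full limit $\lim_{T\to+\infty} Torsion_T((f_t)_t,z,(0,1))$ is assumed to exist, every subsequential limit coincides with it. In particular, restricting to integer times $T=n\in\N$, $n\neq 0$, one has
$$
Torsion((f_t)_t,z)=\lim_{n\to+\infty} Torsion_n\bigl((f_t)_t,z,(0,1)\bigr).
$$

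The second step is to apply Theorem \ref{thm intro 2}: for every $n\in\N$, $n\neq 0$, the finite-time torsion $Torsion_n((f_t)_t,z,(0,1))$ belongs to the open interval $(-\pi,0)$. The limit of a convergent sequence contained in $(-\pi,0)$ must lie in the closure $[-\pi,0]$, which yields exactly the claimed inclusion $Torsion((f_t)_t,z)\in[-\pi,0]$.

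No real obstacle is expected here, as all the geometric content has already been absorbed into Theorem \ref{thm intro 2}; the only point to keep in mind is the independence of the torsion from the chosen tangent vector $\xi$, so that the bound obtained specifically for $(0,1)$ carries over to the intrinsic asymptotic quantity $Torsion((f_t)_t,z)$. Note also that the closed bracket at the endpoints $-\pi$ and $0$ is unavoidable in general, since open strict inequalities do not pass to limits.
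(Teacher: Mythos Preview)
Your proof is correct and is essentially the paper's own argument: the corollary is stated right after Theorem~\ref{thm intro 2} and is obtained simply by passing to the limit in the inclusion $Torsion_n((f_t)_t,z,(0,1))\in(-\pi,0)$, using that the asymptotic torsion does not depend on the tangent vector. One minor remark: in this paper the torsion is already defined as the limit over integer times $n\in\N$ (Definition~\ref{definizione_torsion}), so your intermediate step of restricting from real $T$ to integer $n$ is unnecessary, though harmless.
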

\noindent The proof of this result first requires to show the independence of the torsion from the choice of the isotopy for a ${\cal{C}}^1$ diffeomorphism on $\mathbb{A}$ (not necessarily a twist map), which is actually an outcome of Theorem \ref{thm intro 1}.\\
\noindent According to what done by P. Le Calvez in \cite{lecalvezproprietes}, it is known that a (positive) twist map can be joined to $Id_{\mathbb{A}}$ through an isotopy such that $f_t$ is still a (positive) twist map for any $t\in(0,1]$. This condition assures us that the image of the vertical vector $(0,1)$ through $Df_t$ is strictly contained in the right half-plane and therefore it cannot cross the vertical again. Through a non trivial induction argument, we deduce Theorem \ref{thm intro 2}.\\
\indent Thanks to Theorem \ref{thm intro 1}, the same estimation holds true also for the linking number of the orbits of any two points in the lifted framework. Indeed:
\begin{corollario}\label{thm intro 3}
Let $F:\R^2\rightarrow\R^2$ be a lift of a ${\cal{C}}^1$ positive twist map on $\mathbb{A}$. Let $(F_t)_t$ be the isotopy joining the identity to $F$, obtained as a lift of an isotopy on $\mathbb{A}$ joining $Id_{\mathbb{A}}$ to the twist map. Let $z_1,z_2\in\R^2$, $z_1\neq z_2$ be such that their linking number exists. Then
$$
Linking((F_t),z_1,z_2)\in[-\pi,0].
$$
\end{corollario}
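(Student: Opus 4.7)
The plan is to combine Theorem \ref{thm intro 1} (to convert the finite-time linking number into a torsion) with Theorem \ref{thm intro 2} (to bound that torsion), and then to pass to the asymptotic limit. For each integer $n\ge 1$, I would extend $(F_t)_{t\in[0,1]}$ to $[0,n]$ by the standard rule $F_{t+k}=F^k\circ F_t$, reparametrize by $\tilde F_s:=F_{ns}$, $s\in[0,1]$, and observe that the total angular variation is reparametrization-invariant, so $Linking_1((\tilde F_s)_s, z_1, z_2) = n\,Linking_n((F_t)_t, z_1, z_2)\in\R$. Applying Theorem \ref{thm intro 1} to the isotopy $(\tilde F_s)_s$ then supplies a point $z_n\in[z_1,z_2]$ with $Torsion_1((\tilde F_s)_s, z_n, z_2-z_1) = Linking_1((\tilde F_s)_s, z_1, z_2)$; undoing the reparametrization this reads
$$
Torsion_n((F_t)_t, z_n, z_2-z_1) = Linking_n((F_t)_t, z_1, z_2).
$$

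To bound the left-hand side I would invoke Theorem \ref{thm intro 2}. Since $(F_t)$ is a lift of an isotopy $(f_t)$ on $\mathbb{A}$ joining the identity to a positive twist map, and the covering $\R^2\to\mathbb{A}$ identifies the vertical tangent vector at every lift with the vertical tangent vector at the projected point, the vector $DF_t(z_n)(0,1)$ coincides with $Df_t(p(z_n))(0,1)$; hence the continuous determinations of their angles agree and Theorem \ref{thm intro 2} gives $Torsion_n((F_t)_t, z_n, (0,1))\in(-\pi,0)$. To transfer this estimate to the vector $z_2-z_1$ I would pick continuous determinations $\theta_1(t), \theta_2(t)$ of the angles of $DF_t(z_n)(0,1)$ and $DF_t(z_n)(z_2-z_1)$ and study $\Delta(t):=\theta_2(t)-\theta_1(t)$. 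Either $z_2-z_1$ and $(0,1)$ are collinear, in which case $\Delta$ is constant, or they are linearly independent, in which case $DF_t(z_n)$ (orientation-preserving, since $F_t$ is isotopic to the identity through diffeomorphisms) keeps their images linearly independent for every $t$, so $\Delta(t)\notin\pi\Z$ and $\Delta$ stays in a single open interval of length $\pi$. In either case $\abs{\Delta(n)-\Delta(0)}<\pi$, which translates into
$$
\abs{n\,Torsion_n((F_t)_t, z_n, z_2-z_1) - n\,Torsion_n((F_t)_t, z_n, (0,1))} < \pi.
$$

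Combining the two previous inclusions yields $Linking_n((F_t)_t, z_1, z_2)\in\left(-\pi-\tfrac{\pi}{n},\tfrac{\pi}{n}\right)$ for every $n\ge 1$; since by hypothesis $\lim_{T\to+\infty} Linking_T((F_t),z_1,z_2)$ exists, the sub-sequential limit along the integers exists and must lie in $[-\pi,0]$, giving the claim. I expect the delicate point to be the angle-comparison step: one has to make the right choice of continuous determinations and to exploit orientation preservation precisely enough to keep the discrepancy bounded by a single $\pi$ uniformly in $n$, so that after dividing by $n$ the correction vanishes in the limit and the strict $(-\pi,0)$ bound of Theorem \ref{thm intro 2} relaxes to the closed $[-\pi,0]$ bound for the asymptotic linking.
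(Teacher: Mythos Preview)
Your proof is correct and follows essentially the same route as the paper: it combines Corollary \ref{corollario_nacceso_link} (which you re-derive via reparametrization from Theorem \ref{thm intro 1}) with Theorem \ref{thm intro 2}, and your angle-comparison step is precisely Lemma \ref{lemma vettore n acceso} proved in place. The only cosmetic difference is that the paper invokes those two auxiliary results by name, whereas you spell out their proofs directly.
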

\noindent This result was already known by P. Le Calvez for periodic orbits and then, through the ${\cal{C}}^0$ Closing Lemma, also for $F$-invariant measures, but our theorem generalizes it, holding true for any couples of points for which the (asymptotic) linking number exists.\\

\indent The paper is organized as follows. In Section 2 the notation is fixed and the main definitions concerning torsion and linking number are provided. Moreover, we present some useful properties and characterisations of these notions (see Subsections \ref{subsection proprieta 1} and \ref{subsection proprieta 2}). Section 3 is devoted to prove Theorem \ref{thm intro 1} in all details. Some consequences are also presented. Finally, Section 4 focuses on the main results on torsion and linking number for positive twist maps, i.e. Theorem \ref{thm intro 2} and Corollaries \ref{thm intro 3} and \ref{cor intro 2}, remarking also some other interesting implications of them. In addition, we present some examples in which the torsion and the linking number reach the extremal values $0$ and $-\pi$.\\
~\newline
\noindent\textbf{Acknowledgements.} \noindent The author is extremely grateful to Professor Marie-Claude Arnaud and Andrea Venturelli for their precious advices to improve the text and for many stimulating discussions. The author acknowledges the anonymous referee for his or her useful remarks and observations.

\section{Definitions and first properties}
\subsection{Notation}\label{Notation}
Throughout the work we assume $\R^2$ endowed with the canonical euclidean metric and the standard trivialization. Denote as $\mathbb{T}$ the quotient space $\R/2\pi\Z$ and as
$$
p:\R\rightarrow\mathbb{T}
$$
$$
x\mapsto \bar{x}=x\ mod\  2\pi
$$
the universal covering of the 1-dimensional torus $\mathbb{T}$.\\
\noindent We use the notation $\mathbb{A}$ for the product space $\mathbb{T}\times\R$ and 
$$
P:\R^2\rightarrow\mathbb{A}
$$
$$
(x,y)\mapsto (\bar{x},y)=(x\ mod\  2\pi, y)
$$
for the universal covering of the annulus $\mathbb{A}$. A point of the annulus is denoted by $\bar{z}=(\bar{x},y)\in\mathbb{A}$, while $z=(x,y)\in\R^2$ refers to a lift of $\bar{z}$ over $\R^2$.\\
\noindent The functions
\begin{equation}
\bar{p}_1:\mathbb{A}\rightarrow\mathbb{T},\quad (\bar{x},y)\mapsto \bar{x}
\end{equation}
\begin{equation}
\bar{p}_2:\mathbb{A}\rightarrow\R,\quad (\bar{x},y)\mapsto y
\end{equation}
are the projections over the first and the second coordinates, respectively; the coordinate projections of $\R^2$ are denoted as $p_1,p_2$.\\
\noindent The $2$-dimensional torus is the quotient space
$$
\mathbb{T}^2 := \R^2/(2\pi\Z)^2.
$$

\noindent All along the work, the counterclockwise orientation of the plane is chosen.\\
\noindent Once provided a Riemannian metric\footnote{Recall that in this work we are endowing $\R^2$ with the standard Riemannian metric.} and an orientation, the \emph{oriented angle} between two non-zero vectors $u,v\in\R^2$ is well defined as an element of $\mathbb{T}$. A \emph{measure of the angle} is an element of $\R$ whose image through $p$ coincides with the oriented angle.

\noindent The notation ${\cal{R}}(a,\psi)$ refers to the rotation of the plane $\R^2$ of center $a\in\R^2$ and angle $\psi$, while $\tau_v$ denotes the translation on the plane by the vector $v\in\R^2$.

\noindent A fundamental notion will be that of \emph{isotopy}:

\begin{definizione}\label{isotopia}
Let $M,N$ be differential manifolds and let $f,g:M\rightarrow N$ be in $\text{Diff }^1(M,N)$.

\noindent An \emph{isotopy} $(\psi_t)_{t\in[0,1]}$ joining $f$ to $g$ is an arc in $\text{Diff }^1(M,N)$
such that $\psi_0=f,\psi_1=g$ and which is continuous with respect to the \emph{weak} or \emph{compact-open} ${\cal{C}}^1$ topology on $\text{Diff }^1(M,N)$.
\end{definizione}

\begin{definizione}
Let $I\subseteq\R$ be an interval. A \emph{continuous determination} of an angle function $\theta:I\rightarrow\mathbb{T}$ is a continuous lift of $\theta$, i.e. a continuous function $\tilde{\theta}:I\rightarrow\R$ such that $\tilde{\theta}(s)$ is a measure of the oriented angle $\theta(s)$ for any $s\in I$.
\end{definizione}
\noindent We remark that a necessary and sufficient condition for the existence of a continuous determination is the continuity of its angle function.

\subsection{Definition of Torsion and Linking number}
\noindent Let $S$ be a parallelizable surface. Denote as $TS_*$ the set $\{ (x,\xi) :\ x\in S,\ \xi\in T_xS\setminus\{0\} \}$. The notation $T^1S$ refers to the unitary tangent bundle. We fix an orientation and we endow $S$ with a Riemannian metric: the notion of oriented angle between two non zero vectors of the same tangent space is well-defined.\\
\begin{remark}\label{remark vectorfield}
The choice of an orientation and of a reference continuous vector field $X$ over $S$ that never vanishes is equivalent to that of a trivialization diffeomorphism. Indeed, on every tangent space we define the endomorphism $J$
$$
J:T_xS\rightarrow T_xS
$$
as a rotation of angle $\frac{\pi}{2}$ according to the fixed orientation. It holds that $J^2=-Id$.\\
\noindent For any $x\in S$, $(X(x),JX(x))$ provides a direct basis of the tangent space. A trivialization diffeomorphism is so given by
$$
TS\ni (x;\alpha X(x)+\beta JX(x))\overset{\phi}{\mapsto}(x;\alpha,\beta)\in S\times\R^2
$$
where $\alpha,\beta\in\R$ are the coordinates with respect to the basis $(X(x),JX(x))$.
\end{remark}
\noindent Let $(f_t)_{t\in[0,1]}$ be an isotopy joining the identity to $f_1=f$. We then extend the isotopy for any positive time in the following way: let $t\in\R_+$, then the ${\cal{C}}^1$ diffeomorphism $f_t:S\rightarrow S$ is defined as
$$
f_t := f_{\{t\}}\circ f^{\lfloor t\rfloor}
$$
where $\{\cdot\},\lfloor\cdot\rfloor$ denote the fractionary and integer part of $t$, respectively.
\begin{notazione}
With an abuse of notation, we also denote the extended isotopy as $(f_t)_t$.\\
\noindent In addition, we fix a reference vector field $X$ that never vanishes (see Remark \ref{remark vectorfield}). Suppose that $X(x)$ has unitary norm for any $x\in S$. We will make explicit the choice of $X$ when needed. We recall the notation $\theta(u,v)$ for the oriented angle between two non zero vectors $v$ and $u$.
\end{notazione}

\noindent Our definition of \emph{torsion}, the one given by B\'eguin and Boubaker in \cite{beguin}, actually coincides with Ruelle's notion of \emph{rotation number} (see \cite{ruelle}).\\
\begin{definizione}\label{definizione v tilde aa}
Let $S$ be a parallelizable surface and let $(f_t)_{t\in[0,1]}$ be an isotopy in $\text{Diff}^1(S)$ joining the identity $Id_S$ to $f_1=f$. Then, we define the function $v((f_t)_t)$ as follows:
\begin{equation}\label{definizione funzione v torsion}
\begin{split}
v((f_t)_t):&TS_*\times\R\rightarrow\mathbb{T}\\
&(x,\xi,t)\mapsto \theta\left( X(f_t(x)), Df_t(x)\xi \right).
\end{split}
\end{equation}
\noindent Fix then $(x,\xi)\in TS_*$ and, since the angle function $v((f_t)_t)(x,\xi,\cdot)$ is continuous, consider a continuous determination $\tilde{v}((f_t)_t)(x,\xi,\cdot):\R\rightarrow\R$ of it.\\
\end{definizione}
\begin{definizione}
Let $S$ and $f$ be as above. Let $x\in S$ and $\xi\in T_xS\setminus\{0\}$. Consider $v((f_t)_t)(x,\xi,\cdot)$ and $\tilde{v}((f_t)_t)(x,\xi,\cdot)$ as in Definition \ref{definizione v tilde aa}. Then, for any $n\in\N,n\neq 0$ the \emph{torsion at finite time $n$} is
\begin{equation}\label{intro torsion tempi discreti}
Torsion_n((f_t)_t,x,\xi) := \dfrac{1}{n}\left( \tilde{v}((f_t)_t)(x,\xi,n)-\tilde{v}((f_t)_t))(x,\xi,0) \right).
\end{equation}
\end{definizione}

\begin{definizione}\label{definizione_torsion}
Let $S$ and $f$ be as above. Let $x\in S$. Assume that the quantity $Torsion_n((f_t)_t,x,\xi)$ converges as $n\rightarrow +\infty$ for some $\xi\in T_xS\setminus\{0\}$. The \emph{torsion of the orbit of $x$} is then
\begin{equation}\label{torsione def limite}
Torsion((f_t)_t,x) := \lim_{n\rightarrow+\infty}Torsion_n((f_t)_t,x,\xi).
\end{equation}
\end{definizione}

\noindent Whenever the limit exists, the previous quantity does not depend on the chosen lift of the angle function (see Proposition \ref{proprieta torsione}) or on the non zero vector of the tangent space (see Proposition \ref{torsione cambia vettore stima}).\\
\begin{definizione}\label{torsionemisura}
Let $S$ and $f$ be as above. Let $\mu$ be an $f$-invariant Borel probability measure on $S$. Assume that $\mu$ or $(f_t)_t$ has compact support\footnote{\noindent By asking that $(f_t)_t$ has compact support, we demand that for any $t\in[0,1]$ the support of $f_t$ is in a compact set, independent from $t$.}. Then, the \emph{torsion of the measure $\mu$} is
\begin{equation}
Torsion((f_t)_t,\mu) := \int_S Torsion((f_t)_t,x)d\mu(x).
\end{equation}
\end{definizione}
\begin{remark}
This integral is well defined. Indeed
$$
Torsion((f_t)_t,x) = \lim_{n\rightarrow+\infty} Torsion_n((f_t)_t,x,\xi)
$$
and
$$
Torsion_n((f_t)_t,x,\xi) = \dfrac{1}{n}\sum_{i=0}^{n-1} Torsion_1\left( (f_t)_t,f^i(x),Df^i(x)\xi\right)=
$$
$$
=\dfrac{1}{n}\sum_{i=0}^{n-1} Torsion_1((f_t)_t,\cdot,\cdot)\circ f_*^i(x,\xi)
$$
where we set
\begin{equation}
\begin{split}
f_*:&T^1S\rightarrow T^1S \\
(x,\xi)&\mapsto \left(f(x),\dfrac{Df(x)\xi}{\norma{Df(x)\xi}}\right).
\end{split}
\end{equation}
Lift $\mu$ to $\mu_*$, a $f_*$-invariant Borel probability measure on $T^1S$, and notice that 
\[
Torsion_1((f_t)_t,\cdot,\cdot)\in L^1(\mu_*)
\]
thanks to the assumption on the support of $\mu$ or of $(f_t)_t$. We deduce by Birkhoff's Ergodic Theorem that the function $Torsion((f_t)_t,\cdot)$ is defined $\mu$-a.e. and in $L^1(\mu)$.
\end{remark}


\indent In the setting of $\R^2$ we refer to \cite{beguin} to introduce the notion of \emph{linking number}.
\begin{notazione}\label{notation link r2}
The counterclockwise orientation of $\R^2$ is considered. Moreover, we fix the constant vector field $X=(1,0)$.
\end{notazione}
\begin{definizione}\label{definizionelinking}
Let $(F_t)_t$ be an isotopy in $\text{Diff}^1(\R^2)$ joining the identity to $F_1=F$.\\
\noindent Let us denote $\Delta:=\{(z_1,z_2)\in\R^4 :\ z_1=z_2\}$ and define the  function
\begin{equation}
\begin{split}
u((F_t)_t):&(\R^4\setminus\Delta)\times\R\rightarrow\mathbb{T}\\
&(z_1,z_2,t)\mapsto \theta\left( (1,0), F_t(z_2)-F_t(z_1) \right).
\end{split}
\end{equation}
\noindent Fix $(z_1,z_2)\in\R^4\setminus\Delta$ and consider $\tilde{u}((F_t)_t)(z_1,z_2,\cdot):\R\rightarrow\R$, a continuous determination of the angle function $u((F_t)_t)(z_1,z_2,\cdot)$.\\
\noindent For any $n\in\N, n\neq 0$, the \emph{linking number of $z_1$ and $z_2$ at finite time $n$} is
\begin{equation}
Linking_n((F_t)_t,z_1,z_2) := \dfrac{1}{n}\left( \tilde{u}((F_t)_t)(z_1,z_2,n)-\tilde{u}((F_t)_t)(z_1,z_2,0) \right).
\end{equation}
\noindent The \emph{linking number of the orbits of $z_1$ and $z_2$} is
\begin{equation}\label{linking number def limite}
Linking((F_t)_t,z_1,z_2):=\lim_{n\rightarrow+\infty}Linking_n((F_t)_t,z_1,z_2)
\end{equation}
whenever the limit exists.
\end{definizione}

\begin{remark}\label{remark esistenza linking misura}
Let $F$ be as in Definition \ref{definizionelinking}. Let $x\in\R^2$ and let $\mu$ be a $F$-invariant Borel probability measure on $\R^2$ with compact support. Then, for $\mu$-almost every $x\in\R^2$, the linking number $Linking((F_t)_t,x,y)$ exists for $\mu$-almost every $y\in\R^2\setminus\{x\}$. Indeed
$$
Linking((F_t)_t,x,y) = \lim_{n\rightarrow +\infty}\dfrac{1}{n}\sum_{i=0}^{n-1} Linking_1((F_t)_t, F^i(x), F^i(y))=
$$
$$
= \lim_{n\rightarrow +\infty}\dfrac{1}{n} \sum_{i=0}^{n-1} Linking_1((F_t)_t,\cdot,\cdot)\circ F_*^i(x,y)
$$
where
$$
F_*:\R^4\setminus\Delta\rightarrow \R^4\setminus\Delta
$$
$$
(x,y)\mapsto (F(x),F(y)).
$$
\noindent Considering the product measure $\mu\times\mu$ on $\R^4\setminus\Delta$, which is $F_*$-invariant, observe that
$$
Linking_1((F_t)_t,\cdot,\cdot)\in L^1(\mu\times\mu)
$$
since $\mu$ has compact support. Then, Birkhoff's Ergodic Theorem tells us that the function $Linking((F_t)_t,\cdot,\cdot)$ is defined $\mu\times\mu$-almost everywhere and it is in $L^1(\mu\times\mu)$. By Fubini's theorem, for $\mu$-almost every $x\in\R^2$ the function $Linking((F_t)_t,x,\cdot)$ is defined $\mu$-almost everywhere.
\end{remark}


\subsection{First properties of Torsion and Linking number and independence of Torsion from the choice of vector}\label{subsection proprieta 1}
The following propositions highlight some interesting properties of torsion and linking number concerning the choice of the continuous determination, of the tangent vector (for the torsion) and of the isotopy.\\
\noindent Let $S$ be a parallelizable surface and fix an orientation and a reference continuous vector field $X:S\rightarrow TS$ of unitary norm on it. For any $x\in S$, $(X(x),JX(x))$ provides a direct basis of the tangent space.\\
\noindent Let $(f_t)_t$ be an isotopy in $\text{Diff}^1(S)$ joining the identity to $f_1=f$.
\begin{proposizione}\label{proprieta torsione}
For any $(x,\xi)\in TS_*$ the quantities
$$
Torsion_n((f_t)_t,x,\xi)\qquad\forall n\in\N,n\neq 0
$$
$$
Torsion((f_t)_t,x) = \lim_{n\rightarrow+\infty}Torsion_n((f_t)_t,x,\xi)\qquad\text{when it exists}
$$
do not depend on the choice of the continuous determination of the angle function $v((f_t)_t)(x,\xi,\cdot)$.\\
\noindent Let $(g_t)_t$ be another isotopy joining the identity to $f$ and assume that $S$ is connected. There exists an integer $k\in\Z$ independent from $x\in S$ and $\xi\in T_xS\setminus\{0\}$ so that
$$
Torsion_n((f_t)_t,x,\xi)=Torsion_n((g_t)_t,x,\xi)+2\pi k\qquad\forall n\in\N,n\neq 0
$$
$$
Torsion((f_t)_t,x)=Torsion((g_t)_t,x)+2\pi k.
$$
\end{proposizione}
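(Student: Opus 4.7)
The plan is as follows. The first assertion---independence from the choice of continuous determination---is essentially tautological: any two continuous lifts of the same $\mathbb{T}$-valued function $v((f_t)_t)(x,\xi,\cdot):\R\to\mathbb{T}$ differ by a constant in $2\pi\Z$ (by connectedness of $\R$ and discreteness of the covering group of $p:\R\to\mathbb{T}$), and this constant cancels in the difference $\tilde{v}(n)-\tilde{v}(0)$ defining $Torsion_n$, hence also in the limit.

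For the second assertion, I would introduce the discrepancy
\begin{equation*}
D(x,\xi):=\bigl[\tilde{v}((f_t)_t)(x,\xi,1)-\tilde{v}((f_t)_t)(x,\xi,0)\bigr]-\bigl[\tilde{v}((g_t)_t)(x,\xi,1)-\tilde{v}((g_t)_t)(x,\xi,0)\bigr]
\end{equation*}
on $TS_*$, well defined by Part~1. Since $f_0=g_0=Id_S$ and $f_1=g_1=f$, the angle functions $v((f_t)_t)$ and $v((g_t)_t)$ coincide in $\mathbb{T}$ at $t=0$ and at $t=1$, so each bracket is a measure of the same angle; it follows that $D(x,\xi)\in 2\pi\Z$ for every $(x,\xi)\in TS_*$.

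The core step is to show that $D$ is continuous. Around any $(x_0,\xi_0)\in TS_*$, I would fix a local continuous lift of $v((f_t)_t)(\cdot,\cdot,0)$ (available since $p:\R\to\mathbb{T}$ is a covering and any small enough neighborhood in $TS_*$ is simply connected) and then use unique path-lifting in the $t$-variable: because the isotopies are continuous in the compact-open $\mathcal{C}^1$ topology, the functions $v((f_t)_t)$ and $v((g_t)_t)$ are jointly continuous on $TS_*\times[0,1]$, so the associated continuous determinations can be chosen to vary jointly continuously. Hence $D$ is continuous with values in the discrete set $2\pi\Z$, and is therefore locally constant. The parallelizability of $S$ and the assumption that $S$ is connected yield $TS_*\cong S\times(\R^2\setminus\{0\})$, which is connected, so $D\equiv 2\pi k$ for a single $k\in\Z$.

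To extend the identity from $n=1$ to arbitrary $n\in\N$, I would exploit the iterative extension $f_{m+s}=f_{s}\circ f^m$ (and the analogous one for $g$, valid because $g_1=f$). By the chain rule $Df_{m+s}(x)=Df_s(f^m(x))\circ Df^m(x)$, one verifies that
\begin{equation*}
\tilde{v}((f_t)_t)(x,\xi,m+1)-\tilde{v}((f_t)_t)(x,\xi,m)=Torsion_1\bigl((f_t)_t,f^m(x),Df^m(x)\xi\bigr),
\end{equation*}
and similarly for $(g_t)_t$. Telescoping from $m=0$ to $n-1$ and subtracting the two resulting identities,
\begin{equation*}
n\bigl[Torsion_n((f_t)_t,x,\xi)-Torsion_n((g_t)_t,x,\xi)\bigr]=\sum_{m=0}^{n-1}D\bigl(f^m(x),Df^m(x)\xi\bigr)=2\pi kn,
\end{equation*}
which gives the claimed equality for every $n$; passing to the limit handles the asymptotic statement. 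The main obstacle is the joint-continuity and lifting argument that upgrades the pointwise integrality of $D$ to its being a single integer independent of $(x,\xi)$; once $D$ is known to be constant on $TS_*$, the rest is a routine Birkhoff-type telescoping.
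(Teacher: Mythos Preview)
Your proposal is correct and follows exactly the approach the paper has in mind: the paper's own ``proof'' consists of the single sentence that the result is an immediate consequence of the continuity of the involved functions and the property of $f$ of being isotopic to the identity, and your argument is precisely the standard unpacking of that sentence (discreteness of $2\pi\Z$ plus connectedness of $TS_*$ for the integer $k$, telescoping for general $n$). There is nothing to add.
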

\noindent The proof is an immediate consequence of the continuity of the involved functions and the property of $f$ of being isotopic to the identity.
\begin{proposizione}\label{PROP IMPO vV}
Fix $x\in S$ and define the following functions
$$
\Pi:\R\rightarrow T_xS
$$
$$
s\mapsto \cos(s) X(x) + \sin(s) JX(x)
$$
and
\begin{equation}
\begin{split}
w((f_t)_t,x)&:\R\times\R\rightarrow\mathbb{T}\\
(s,t)&\mapsto\theta\left( X(f_t(x)), Df_t(x)\Pi(s) \right).
\end{split}
\end{equation}
\noindent Then, there exists a unique continuous determination of $w((f_t)_t,x)$, denoted as $W:\R\times\R\rightarrow\R$, such that $W(0,0)=0$. Moreover
\begin{itemize}
\item[$(i)$] $W(\cdot,0)=Id_{\R}(\cdot)$.
\item[$(ii)$] For any $t\in\R$, $W(\cdot,t)$ is an increasing homeomorphism of $\R$.
\item[$(iii)$] For any $s,t\in\R$, $W(s+\pi,t)=W(s,t)+\pi$.
\end{itemize}
\end{proposizione}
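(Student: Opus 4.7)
The plan is to verify the existence and uniqueness of $W$ and then the three listed properties in turn; everything reduces to lifting-theoretic or continuity-and-discreteness arguments except for (ii), which is the only substantive point.

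For existence and uniqueness, I first note that $w((f_t)_t,x):\R^2\to\mathbb{T}$ is continuous: $\Pi$ is smooth and nowhere vanishing, the map $(s,t)\mapsto Df_t(x)\Pi(s)$ is a continuous family of nonzero tangent vectors at $f_t(x)$ by weak ${\cal{C}}^1$-continuity of the isotopy combined with linearity of $Df_t(x)$, and $X(f_t(x))$ is continuous in $t$. Since $\R^2$ is simply connected, the standard lifting criterion for the covering $\R\to\mathbb{T}$ provides a continuous determination, and the normalization $W(0,0)=0$ pins it down uniquely. Property (i) is then a direct evaluation: at $t=0$, $f_0=Id_S$ and $X(f_0(x))=X(x)$, so the angle reduces to that between $X(x)$ and $\cos(s)X(x)+\sin(s)JX(x)$ in the direct basis $(X(x),JX(x))$, which is $s \bmod 2\pi$; the obvious lift $s\mapsto s$ vanishes at $0$, hence equals $W(\cdot,0)$ by uniqueness.

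For (iii), the key remark is that $\Pi(s+\pi)=-\Pi(s)$, so by linearity $Df_t(x)\Pi(s+\pi)=-Df_t(x)\Pi(s)$, which shifts the oriented angle by $\pi$ modulo $2\pi$. Hence $(s,t)\mapsto W(s+\pi,t)-W(s,t)-\pi$ is a continuous $2\pi\Z$-valued function on the connected space $\R^2$, and is therefore constant; by (i) applied at $(0,0)$ the constant equals $W(\pi,0)-W(0,0)-\pi=0$.

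The main step is (ii). For fixed $t$, set $u:=Df_t(x)X(x)$ and $v:=Df_t(x)JX(x)$, so that $\gamma(s):=Df_t(x)\Pi(s)=\cos(s)\,u+\sin(s)\,v$ traces an ellipse around the origin of $T_{f_t(x)}S$. Since $f_t$ is isotopic to the identity, $Df_t(x)$ preserves orientation, so $(u,v)$ is a positively oriented basis and $u\wedge v>0$. A short computation gives $\gamma(s)\wedge\gamma'(s)\equiv u\wedge v>0$, which says exactly that the angular velocity of $\gamma$ around the origin is strictly positive; since $W(\cdot,t)$ is a continuous determination of that angle, it is strictly increasing. Iterating (iii) twice yields $W(s+2\pi,t)=W(s,t)+2\pi$, so $W(\cdot,t)$ is a continuous, strictly increasing surjection of $\R$, i.e. an increasing homeomorphism. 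The only non-routine point of the whole argument is this extraction of strict monotonicity from orientation preservation, and the wedge-product identity settles it cleanly.
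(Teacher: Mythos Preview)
Your proof is correct. The overall structure matches the paper's---continuity of $w$, uniqueness of the lift by simple connectedness, and the identification $W(\cdot,0)=\mathrm{Id}_\R$---but your treatment of (ii) is genuinely different, and you also reorder by proving (iii) first and then using it to get surjectivity in (ii).

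For (ii), the paper observes that $W(\cdot,t)$ is a continuous lift of the circle map $m(\cdot,t):\mathbb{T}\to\mathbb{T}$, $\xi\mapsto\theta(X(f_t(x)),Df_t(x)\bar\Pi(\xi))$; since $Df_t(x)$ is linear and orientation-preserving, $m(\cdot,t)$ is an orientation-preserving circle homeomorphism, and any lift of such a map is an increasing homeomorphism of $\R$. Your argument instead parametrizes the image ellipse $\gamma(s)=\cos(s)\,u+\sin(s)\,v$ and computes $\gamma\wedge\gamma'=u\wedge v>0$ directly, extracting strict monotonicity of the lifted angle from positivity of the angular velocity. The paper's route is more qualitative and transfers immediately to any orientation-preserving linear isomorphism without computation; your route is more explicit and self-contained, and has the side benefit that it actually shows $W(\cdot,t)$ is $C^1$ in $s$ with derivative $\dfrac{u\wedge v}{|\gamma(s)|^2}>0$, which is slightly more information than the paper states. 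Either approach is perfectly adequate here.
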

\begin{proof}
By the continuity of the isotopy with respect to the compact-open ${\cal{C}}^1$ topology, the function $w((f_t)_t,x)$ is continuous. There is a unique continuous determination $W$ such that $W(0,0)=0$, since by fixing the value of $W$ in a point we are selecting the lift.
\[
\xymatrix@C-1pc{
	\R\times\R \ar[rr]^{W} \ar[dr]_{w} && \R \ar[dl]^{p} \\
	&\mathbb{T}
}
\]

\noindent Notice that
\begin{equation*}
\begin{split}
W(\cdot,0)&:\R\rightarrow\R\\
s&\mapsto W(s,0)
\end{split}
\end{equation*}
is a lift of $\R\ni s\mapsto w((f_t)_t,x)(s,0)=\theta(X(x),\Pi(s))=p(s)$. Since $W(0,0)=0$, $W(\cdot,0)$ is the identity of $\R$.\\
\noindent Let us introduce the following function
$$
\bar{\Pi}:\mathbb{T}\rightarrow T_x S
$$
$$
\xi\mapsto \cos(\xi)X(x) +\sin(\xi)JX(x).
$$
\noindent For any fixed $t\in\R$, the function $W(\cdot,t) :\R\rightarrow\R$ is a continuous lift of the angle function
\begin{equation*}
\begin{split}
m(\cdot,t)&:\mathbb{T}\rightarrow\mathbb{T}\\
\xi &\mapsto \theta\left( X(f_t(x)), Df_t(x)\bar{\Pi}(\xi) \right).
\end{split}
\end{equation*}
\noindent As $Df_t(x)$ is linear and preserves the orientation, $m(\cdot,t)$ is an orientation preserving circle homeomorphism such that $m(\xi+\pi,t)=m(\xi,t)+\pi$. Hence, its lift $W(\cdot,t)$ is an increasing homeomorphism of $\R$.\\
\noindent The functions $(s,t)\mapsto W(s,t)+\pi$ and $(s,t)\mapsto W(s+\pi,t)$ are two lifts of $(s,t)\mapsto m(s,t)+\pi$ that coincide for $(s,t)=(0,0)$, hence $W(s+\pi,t)=W(s,t)+\pi$, i.e. $W(\cdot,t)$ commutes with the translation of $\pi$ for any $t\in\R$.\\
\end{proof}


\begin{proposizione}\label{torsione cambia vettore stima}
Let $x\in S$. Assume that for some $\xi\in T_xS\setminus\{0\}$ the quantity $Torsion_n((f_t)_t,x,\xi)$ converges as $n\rightarrow+\infty$. Then, the torsion of the orbit of $x$ does not depend on the choice of the tangent vector. In other words, for any vector $\delta\in T_xS\setminus\{0\}$ it holds
$$
Torsion((f_t)_t,x)=\lim_{n\rightarrow+\infty}Torsion_n((f_t)_t,x,\xi)=\lim_{n\rightarrow+\infty}Torsion_n((f_t)_t,x,\delta).
$$
\end{proposizione}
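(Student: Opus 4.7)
The strategy is to leverage Proposition \ref{PROP IMPO vV}, which provides a global continuous lift $W\colon \R\times\R\to\R$ describing simultaneously how every direction in $T_xS$ evolves under $Df_t(x)$. First, observe that the torsion at finite time is invariant under rescaling the vector: for $\lambda>0$ one has $\theta(X(f_t(x)), Df_t(x)(\lambda\xi))=\theta(X(f_t(x)),Df_t(x)\xi)$, while replacing $\xi$ with $-\xi$ shifts the whole angle function by the constant $\pi$, which cancels in the difference defining $Torsion_n$. Hence we may assume $\xi = \Pi(s_\xi)$ and $\delta = \Pi(s_\delta)$ for some $s_\xi,s_\delta\in\R$.

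Next, I would identify the relevant continuous determinations. By construction, the functions $t\mapsto W(s_\xi,t)$ and $t\mapsto W(s_\delta,t)$ are continuous lifts of $v((f_t)_t)(x,\xi,\cdot)$ and $v((f_t)_t)(x,\delta,\cdot)$ respectively. Using Proposition \ref{proprieta torsione} (independence from the choice of determination) together with property $(i)$ of Proposition \ref{PROP IMPO vV}, we get
\begin{equation*}
Torsion_n((f_t)_t,x,\xi) = \frac{W(s_\xi,n)-s_\xi}{n}, \qquad Torsion_n((f_t)_t,x,\delta) = \frac{W(s_\delta,n)-s_\delta}{n}.
\end{equation*}

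The core of the argument is then a uniform bound on the discrepancy. Write $s_\xi - s_\delta = k\pi + r$ with $k\in\Z$ and $r\in[0,\pi)$. Using property $(iii)$ of Proposition \ref{PROP IMPO vV},
\begin{equation*}
W(s_\xi,t) - W(s_\delta,t) = \bigl[W(s_\delta+r,t) - W(s_\delta,t)\bigr] + k\pi,
\end{equation*}
and by the strict monotonicity of $W(\cdot,t)$ (property $(ii)$) together with $W(s_\delta+\pi,t)=W(s_\delta,t)+\pi$, the bracketed term lies in $[0,\pi]$. Hence
\begin{equation*}
\bigl|\,[W(s_\xi,n)-W(s_\delta,n)]-[s_\xi-s_\delta]\,\bigr| \leq \pi
\end{equation*}
uniformly in $n$. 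Subtracting the two expressions for $Torsion_n$ and dividing by $n$, the difference tends to $0$, so the two limits coincide.

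The proof contains no real obstacle: all the substantive work is contained in Proposition \ref{PROP IMPO vV}, which guarantees the simultaneous continuous lift $W$ together with its monotonicity and $\pi$-equivariance. The only point that requires some care is the reduction to unit vectors via the $\Pi$ parameterization, which ensures that we can compare torsions along $\xi$ and $\delta$ through the \emph{same} global lift $W$ rather than through two a priori unrelated continuous determinations.
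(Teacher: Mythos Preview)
Your proof is correct and follows the same overall strategy as the paper: both establish the uniform bound
\[
\bigl|Torsion_n((f_t)_t,x,\xi)-Torsion_n((f_t)_t,x,\delta)\bigr|\le \frac{\pi}{n}
\]
and conclude by letting $n\to\infty$. The only difference is in how this bound is obtained. The paper proves it directly (as Lemma~\ref{lemma vettore n acceso}) via a four-case analysis on the relative position of $\xi$ and $\delta$ (positively/negatively colinear, direct/indirect basis), using that $Df_t(x)$ preserves orientation so the same case persists for all $t$; you instead invoke the global lift $W$ from Proposition~\ref{PROP IMPO vV}, whose monotonicity and $\pi$-equivariance encode exactly that orientation-preservation. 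Your route is slightly more conceptual and avoids the case split, while the paper's is more elementary and self-contained; both yield the same inequality with the same constant.
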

\begin{proof}
Consider $\xi,\delta\in T_xS\setminus\{0\}$ and assume that $\lim_{n\rightarrow+\infty}Torsion_n((f_t)_t,x,\xi)$ exists. Then, also $\lim_{n\rightarrow+\infty}Torsion_n((f_t)_t,x,\delta)$ exists and it coincides with the previous one.\\
The result easily follows once we prove that
$$
\lim_{n\rightarrow+\infty}\abs{Torsion_n((f_t)_t,x,\xi)-Torsion_n((f_t)_t,x,\delta)}=0.
$$
\begin{lemma}\label{lemma vettore n acceso}
Fix $x\in S$. For $n\in\N,n\neq 0$ and for $\xi,\delta\in T_xS\setminus\{0\}$ it holds
\begin{equation}
\abs{Torsion_n((f_t)_t,x,\xi)-Torsion_n((f_t)_t,x,\delta)}<\dfrac{\pi}{n}.
\end{equation}
\end{lemma}
\begin{proof}
The quantity
$$
\abs{Torsion_n((f_t)_t,x,\xi)-Torsion_n((f_t)_t,x,\delta)}
$$
can be written as
$$
\dfrac{1}{n}\abs{\left( \tilde{v}((f_t)_t)(x,\xi,n)-\tilde{v}((f_t)_t)(x,\delta,n) \right)-\left( \tilde{v}((f_t)_t)(x,\xi,0)-\tilde{v}((f_t)_t)(x,\delta,0) \right)}.
$$
These quantities do not depend on the chosen determination of the angle function $v$. 
Concerning the relative position of the vectors $\xi,\delta$, four cases can occur:
\begin{equation*}
\tilde{v}((f_t)_t)(x,\xi,0) - \tilde{v}((f_t)_t)(x,\delta,0)  \begin{cases}
=2\pi k \qquad\text{if $\xi,\delta$ are positively colinear}\\
=\pi+2\pi k \qquad\text{if $\xi,\delta$ are negatively colinear}\\
\in(0,\pi)+2\pi k \qquad\text{if $(\xi,\delta)$ is a direct basis}\\
\in(\pi,2\pi) + 2\pi k \qquad\text{if $(\xi,\delta)$ is an indirect basis}.
\end{cases}
\end{equation*}
At any time, the same four cases can occur and
\begin{equation*}
\tilde{v}((f_t)_t)(x,\xi,t) - \tilde{v}(f_t)_t)(x,\delta,t)\begin{cases}
=2\pi k \qquad\text{if $\xi,\delta$ are positively colinear}\\
=\pi+2\pi k \qquad\text{if $\xi,\delta$ are negatively colinear}\\
\in(0,\pi)+2\pi k \qquad\text{if $(\xi,\delta)$ is a direct basis}\\
\in(\pi,2\pi) + 2\pi k \qquad\text{if $(\xi,\delta)$ is an indirect basis}.
\end{cases}
\end{equation*}
where the integer $k\in\Z$ is the same for any $t$.\\
\noindent This holds in particular for $t=n$ and, checking all the possible cases, we obtain
$$
\dfrac{1}{n}\abs{\left( \tilde{v}((f_t)_t)(x,\xi,n)-\tilde{v}((f_t)_t)(x,\delta,n) \right) - \left( \tilde{v}((f_t)_t)(x,\xi,0)-\tilde{v}((f_t)_t)(x,\delta,0) \right)}<\dfrac{\pi}{n}.
$$
\end{proof}
\noindent From Lemma \ref{lemma vettore n acceso} we conclude since
$$
0\leq\lim_{n\rightarrow+\infty}\abs{Torsion_n((f_t)_t,x,\xi)-Torsion_n((f_t)_t,x,\delta)}\leq\lim_{n\rightarrow+\infty}\dfrac{\pi}{n}=0.
$$
\end{proof}

\indent Analogous properties hold true for the linking number, whose proof is similar to that of Proposition \ref{torsione cambia vettore stima}.\\
\begin{proposizione}\label{prprieta linking number}
Let $(F_t)_{t\in \R}$ be an isotopy in $\text{Diff}^1(\R^2)$ joining the identity $F_0=Id_{\R^2}$ to $F_1=F$. For any points $z_1,z_2\in\R^2,z_1\neq z_2$ the quantities
$$
Linking_n((F_t)_t,z_1,z_2)\qquad\forall n\in\N,n\neq 0
$$
$$
Linking((F_t)_t,z_1,z_2)\qquad\text{when it exists}
$$
do not depend on the choice of the continuous determination of the angle function $u((F_t)_t)(z_1,z_2,\cdot)$.\\
\noindent Let $(G_t)_t$ be another isotopy joining the identity to $F$. Then, there exists an integer $k\in\Z$ independent from the points $z_1,z_2\in\R^2$ so that
$$
Linking_n((F_t)_t,z_1,z_2)=Linking_n((G_t)_t,z_1,z_2)+2\pi k\qquad\forall n\in\N,n\neq 0
$$
$$
Linking((F_t)_t,z_1,z_2)=Linking((G_t)_t,z_1,z_2)+2\pi k.
$$
\end{proposizione}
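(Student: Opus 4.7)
The first assertion is immediate: any two continuous determinations of the same $\mathbb{T}$-valued function on a connected domain differ by a constant element of $2\pi\Z$, so the telescoping quantity $\tilde{u}((F_t)_t)(z_1,z_2,n)-\tilde{u}((F_t)_t)(z_1,z_2,0)$ is independent of the chosen lift, and so is its limit as $n\to+\infty$ when defined.

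For the second assertion the idea is to encode the comparison between $(F_t)_t$ and $(G_t)_t$ as a winding number. I would concatenate the first isotopy with the time-reversal of the second, setting $H_t=F_t$ on $[0,1]$ and $H_t=G_{2-t}$ on $[1,2]$; since $F_0=G_0=Id_{\R^2}$ and $F_1=G_1=F$, this defines a continuous loop in $\text{Diff}^1(\R^2)$ based at the identity. For each $(z_1,z_2)\in\R^4\setminus\Delta$ the curve $t\mapsto H_t(z_2)-H_t(z_1)$ is then a loop in $\R^2\setminus\{0\}$, and a direct bookkeeping with the continuous determinations shows that its winding number $k(z_1,z_2)\in\Z$ around the origin satisfies
$$
Linking_1((F_t)_t,z_1,z_2)-Linking_1((G_t)_t,z_1,z_2)=2\pi\, k(z_1,z_2).
$$
To check that $k$ does not depend on $(z_1,z_2)$, I would choose lifts of the angle functions that depend continuously on $(z_1,z_2)$ on a small open neighbourhood; the winding number is then continuous in $(z_1,z_2)$ and integer-valued, hence locally constant. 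Since $\R^4\setminus\Delta$ is homeomorphic to $\R^2\times(\R^2\setminus\{0\})$, and in particular connected, $k$ is globally constant.

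To pass from $n=1$ to arbitrary $n\in\N\setminus\{0\}$, I would exploit the iterated structure of the extended isotopies: on each interval $[j,j+1]$ one has $F_t=F_{t-j}\circ F^{j}$ and $G_t=G_{t-j}\circ F^{j}$, so the contribution of $[j,j+1]$ to $Linking_n((F_t)_t,z_1,z_2)-Linking_n((G_t)_t,z_1,z_2)$ coincides with $Linking_1((F_t)_t,F^j(z_1),F^j(z_2))-Linking_1((G_t)_t,F^j(z_1),F^j(z_2))$, which equals $2\pi k$ by the constancy of $k$. Summing over $j=0,\dots,n-1$ and dividing by $n$ gives the claimed identity at time $n$, and the asymptotic statement follows by passing to the limit when it exists. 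In my view the only genuine obstacle is the global constancy of $k$: it really rests on the connectedness of $\R^4\setminus\Delta$ together with the continuity of the winding number under continuous deformation of the loop, while the rest of the argument is structural.
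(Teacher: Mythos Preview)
Your argument is correct and aligns with the paper's approach: the paper does not give a detailed proof of this proposition, merely stating that it follows, like the torsion analogue, from ``the continuity of the involved functions and the property of $f$ of being isotopic to the identity.'' Your winding-number formulation via the concatenated loop $H_t$ is a clean way to make this explicit, and your reduction from general $n$ to $n=1$ by decomposing over unit intervals (using $G_1=F_1=F$ so that the iterates agree) is exactly the structural step one needs; the paper leaves all of this implicit.
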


\subsection{Independence of Torsion from the isotopy for diffeomorphisms on $\mathbb{A}$ and $\mathbb{T}^2$}\label{subsection proprieta 2}

In \cite{beguin}, B\'eguin and Boubaker show that the torsion is independent from the choice of the isotopy both for an isotopy with compact support and for a diffeomorphism on the $2$-dimensional torus $\mathbb{T}^2$. In this Section, we prove the independence of the torsion from the isotopy for a ${\cal{C}}^1$ diffeomorphism over the annulus (with no further hypothesis on its support).
\begin{notazione}
Until the end of the paper (if not specified), we will consider as parallelizable surface the annulus $\mathbb{A}=\mathbb{T}\times\R$. Let $(f_t)_t$ be an isotopy in $\text{Diff}^1(\mathbb{A})$ joining $Id_{\mathbb{A}}$ to $f_1=f$. Let us fix the counterclockwise orientation on $\R^2$ and consider as continuous never-vanishing vector field $X$ the constant one $(1,0)$.\\
\noindent Let $(F_t)_t$ be the isotopy obtained as the lift of $(f_t)_t$ such that $F_0=Id_{\R^2}$. It joins the identity $Id_{\R^2}$ to $F$, where $F:\R^2\rightarrow\R^2$ is a lift of $f$. We then remark that for any time $t$ and for any $z=(x,y)\in\R^2$ it holds
\begin{equation}\label{condizione periodicita lift}
F_t(x+2\pi,y) = F_t(x,y) + (2\pi,0).
\end{equation}
\end{notazione}
\noindent As an intermediate step, we first show that the linking number in the lifted setting does not depend on the choice of the isotopy.
\begin{proposizione}\label{linking non depende chosen isotopy}
Let $(f_t)_t,(g_t)_t$ be two different isotopies in $\text{Diff}^1(\mathbb{A})$ joining $Id_{\mathbb{A}}$ to $f_1=g_1=f$. Let $(F_t)_t,(G_t)_t$ in $\text{Diff}^1(\R^2)$ be lifts of the isotopies $(f_t)_t,(g_t)_t$ such that $F_0=G_0=Id_{\R^2}$.\\
\noindent Then for any $z_1,z_2\in\R^2$, $z_1\neq z_2$ it holds
$$
Linking_1((F_t)_t,z_1,z_2) = Linking_1((G_t)_t,z_1,z_2)
$$
and hence, whenever the limit exists, $Linking((F_t)_t,z_1,z_2)=Linking((G_t)_t,z_1,z_2)$.
\end{proposizione}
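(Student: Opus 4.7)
The plan is to combine the general isotopy-change formula for the linking number (Proposition \ref{prprieta linking number}) with the special $2\pi$-periodicity of lifts coming from the annulus (equation \eqref{condizione periodicita lift}), in order to force the a priori integer ambiguity to vanish.

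First I would apply Proposition \ref{prprieta linking number} to the two isotopies $(F_t)_t$ and $(G_t)_t$ in $\text{Diff}^1(\R^2)$, both joining $Id_{\R^2}$ to some lift of $f$ (note that $F_1$ and $G_1$ need not coincide: they are two lifts of $f$, hence differ by a horizontal deck translation $\tau_{(2\pi k_0,0)}$ for some $k_0\in\Z$). Even when $F_1\neq G_1$, the same reasoning as in Proposition \ref{prprieta linking number}, based on continuity of the angle function and compactness of $[0,1]$, yields an integer $k\in\Z$, independent of $z_1,z_2\in\R^2$ with $z_1\neq z_2$, such that
\[
Linking_1((F_t)_t,z_1,z_2)-Linking_1((G_t)_t,z_1,z_2)=2\pi k.
\]

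The next step is to evaluate both sides at a cleverly chosen pair of points in order to determine $k$. I would take $z_1\in\R^2$ arbitrary and set $z_2:=z_1+(2\pi,0)$. By the periodicity relation \eqref{condizione periodicita lift}, for every $t\in[0,1]$,
\[
F_t(z_2)-F_t(z_1)=F_t(z_1)+(2\pi,0)-F_t(z_1)=(2\pi,0),
\]
and the same holds with $G_t$ in place of $F_t$. Hence the angle functions $u((F_t)_t)(z_1,z_2,\cdot)$ and $u((G_t)_t)(z_1,z_2,\cdot)$ are identically $p(0)\in\mathbb{T}$, so their unique continuous determinations starting at $0$ are identically $0$. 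Therefore
\[
Linking_1((F_t)_t,z_1,z_2)=0=Linking_1((G_t)_t,z_1,z_2),
\]
which, combined with the relation above, gives $2\pi k=0$, i.e. $k=0$. Since $k$ was independent of the chosen pair, the identity $Linking_1((F_t)_t,z_1,z_2)=Linking_1((G_t)_t,z_1,z_2)$ holds for every $z_1\neq z_2$. Passing to the limit $n\to+\infty$ when it exists then yields the corresponding equality for the asymptotic linking number.

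The only delicate point is the first step: making sure that Proposition \ref{prprieta linking number} (or a small adaptation of its proof) applies even though $F_1$ and $G_1$ may differ, so that the constant $k$ really is independent of $(z_1,z_2)$. This uses only the continuity of $(z_1,z_2)\mapsto Linking_1((F_t)_t,z_1,z_2)-Linking_1((G_t)_t,z_1,z_2)$ on the connected set $\R^4\setminus\Delta$ and the fact that this difference is a continuous $2\pi\Z$-valued function, hence locally constant and thus constant. Once this is granted, the periodicity trick $z_2=z_1+(2\pi,0)$ closes the argument immediately.
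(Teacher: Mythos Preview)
Your proposal is correct and follows essentially the same approach as the paper: both argue that the difference $Linking_1((F_t)_t,\cdot,\cdot)-Linking_1((G_t)_t,\cdot,\cdot)$ is a continuous $2\pi\Z$-valued function on the connected set $\R^4\setminus\Delta$, hence constant, and then evaluate at a pair $z,\,z+(2\pi,0)$ to force the constant to vanish via \eqref{condizione periodicita lift}. Your observation that $F_1$ and $G_1$ may differ by a deck translation is a good point the paper leaves implicit; note however that since $F_1=G_1+(2\pi k_0,0)$ for some $k_0\in\Z$, one has $F_1(z_2)-F_1(z_1)=G_1(z_2)-G_1(z_1)$, so the angle functions agree at both endpoints $t=0,1$ and the $2\pi\Z$-valuedness of the difference follows without any real adaptation of Proposition~\ref{prprieta linking number}.
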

\begin{proof}
Recalling the definition of the diagonal in $\R^4$, that is
$$
\Delta := \{ \left( (x,y),(x',y') \right)\in\R^4 :\ (x,y) = (x',y') \},
$$
we define the following functions
\begin{equation*}
\begin{split}
Linking_1((F_t)_t):&(\R^2\times\R^2)\setminus\Delta\rightarrow\R \\
&(z,z')\mapsto Linking_1((F_t)_t,z,z')
\end{split}
\end{equation*}
and
\begin{equation*}
\begin{split}
Linking_1((G_t)_t):&(\R^2\times\R^2)\setminus\Delta\rightarrow\R\\
&(z,z')\mapsto Linking_1((G_t)_t,z,z').
\end{split}
\end{equation*}

\noindent Both these functions are continuous ones. Moreover, for any $(z,z')\in(\R^2\times\R^2)\setminus\Delta$ there exists $k=k_{z,z'}\in\Z$ such that
$$
Linking_1((F_t)_t,z,z')=Linking_1((G_t)_t,z,z')+2\pi k.
$$
\noindent Since $(\R^2\times\R^2)\setminus\Delta$ is connected, the integer $k\in\Z$ does not depend on the points $(z,z')$ of $(\R^2\times\R^2)\setminus\Delta$.\\

\indent Consider then points $z\neq z'$ such that $z'=z+(2\pi,0)$. To fix the ideas, let us choose $z=(0,0),z'=(2\pi,0)$. Because of \eqref{condizione periodicita lift}, it holds that
\begin{equation}
Linking_1((F_t)_t,z,z')=Linking_1((G_t)_t,z,z')=0.
\end{equation}
\noindent By this observation, we conclude that $k=0$, i.e. the linking number does not depend on the chosen isotopy. 
\end{proof}

The next proposition proves that the definition of torsion for a ${\cal{C}}^1$ diffeomorphism $f:\mathbb{A}\rightarrow\mathbb{A}$ isotopic to the identity is independent from the choice of the isotopy.

\begin{proposizione}\label{indipendenza torsione isotopia}
Let $(f_t)_t,(g_t)_t$ be two different isotopies in $\text{Diff}^1(\mathbb{A})$ joining the identity $Id_{\mathbb{A}}$ to $f_1=g_1=f$.\\
\noindent Then for any $\bar{z}\in\mathbb{A}$ and for any $\xi\in T_{\bar{z}}\mathbb{A}\setminus\{0\}$
\begin{equation}
Torsion_1((f_t)_t,\bar{z},\xi) = Torsion_1((g_t)_t,\bar{z},\xi).
\end{equation}
Moreover
\begin{equation}
Torsion((f_t)_t,\bar{z}) = Torsion((g_t)_t,\bar{z})
\end{equation}
whenever the limit exists.
\end{proposizione}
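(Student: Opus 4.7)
The plan is to reduce the statement to a direct application of Theorem \ref{thm intro 1} combined with Proposition \ref{linking non depende chosen isotopy}. First I would invoke Proposition \ref{proprieta torsione}: since $\mathbb{A}$ is connected and parallelizable, there exists an integer $k\in\Z$, independent of $(\bar z,\xi)\in T\mathbb{A}_*$, such that
\begin{equation*}
Torsion_1((f_t)_t,\bar z,\xi) = Torsion_1((g_t)_t,\bar z,\xi) + 2\pi k,
\end{equation*}
and with the same $k$ the asymptotic statement holds whenever the limits exist. Hence the whole task reduces to showing $k=0$. Because the trivialization on $\mathbb{A}$ is induced by the standard one on $\R^2$, one has $Df_t(\bar z)=DF_t(z)$ for any lift $z$ of $\bar z$, so $Torsion_1((f_t)_t,\bar z,\xi)=Torsion_1((F_t)_t,z,\xi)$, and similarly for $(g_t)_t$ and $(G_t)_t$. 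It is thus enough to exhibit a single pair $(z,\xi)\in\R^2\times(\R^2\setminus\{0\})$ where the lifted finite-time torsions agree.

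Fix such a pair. For every $\epsilon>0$ I would apply Theorem \ref{thm intro 1} to the two points $z$ and $z+\epsilon\xi$, once for the isotopy $(F_t)_t$ and once for $(G_t)_t$. This produces points $z^*_\epsilon, z^{**}_\epsilon\in[z,z+\epsilon\xi]$ with
\begin{equation*}
Torsion_1((F_t)_t, z^*_\epsilon, \epsilon\xi) = Linking_1((F_t)_t, z, z+\epsilon\xi),
\end{equation*}
\begin{equation*}
Torsion_1((G_t)_t, z^{**}_\epsilon, \epsilon\xi) = Linking_1((G_t)_t, z, z+\epsilon\xi).
\end{equation*}
Since $Torsion_1$ is unchanged when the tangent vector is multiplied by a positive scalar (the associated angle function is literally the same), the left-hand sides equal $Torsion_1((F_t)_t, z^*_\epsilon, \xi)$ and $Torsion_1((G_t)_t, z^{**}_\epsilon, \xi)$ respectively. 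Proposition \ref{linking non depende chosen isotopy} then identifies the two right-hand sides, giving
\begin{equation*}
Torsion_1((F_t)_t, z^*_\epsilon, \xi) = Torsion_1((G_t)_t, z^{**}_\epsilon, \xi) \qquad \text{for every } \epsilon>0.
\end{equation*}

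To finish I would send $\epsilon\to 0^+$: both $z^*_\epsilon$ and $z^{**}_\epsilon$ lie in $[z,z+\epsilon\xi]$ and so converge to $z$. The map $w\mapsto Torsion_1((F_t)_t, w, \xi)$ should be continuous: the angle function $(w,t)\mapsto \theta((1,0), DF_t(w)\xi)$ is jointly continuous and nowhere zero (by the $\mathcal{C}^1$ compact-open continuity of the isotopy and the invertibility of $DF_t$), so one may extract a jointly continuous determination on a neighborhood of $\{z\}\times[0,1]$, whence the continuity of $Torsion_1((F_t)_t,\cdot,\xi)$ at $z$; the same argument applies to $(G_t)_t$. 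Passing to the limit yields $Torsion_1((F_t)_t, z, \xi) = Torsion_1((G_t)_t, z, \xi)$, so $k=0$ and the proposition follows. The delicate point in this plan will be the clean verification of the continuity of $Torsion_1$ in the base point via a jointly continuous choice of angle determination; the rest is a direct assembly of results already at hand.
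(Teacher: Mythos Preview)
Your argument is correct, but it differs from the paper's in one notable way: the paper does \emph{not} invoke Theorem~\ref{thm intro 1} here. Instead it uses a more elementary limiting relation between linking and torsion. After the same reduction to showing $k=0$, the paper parametrizes the segment $[z,z+\xi]$ by $z(s)=z+s\xi$, extends the angle function $u((F_t)_t)(s,t)=\theta((1,0),F_t(z(s))-F_t(z))$ continuously to $s=0$ by setting $u((F_t)_t)(0,t)=\theta((1,0),DF_t(z)\xi)=v((F_t)_t)(0,t)$, and then simply passes to the limit $s\to 0^+$ in the identity $Linking_1((F_t)_t,z,z(s))=Linking_1((G_t)_t,z,z(s))$ supplied by Proposition~\ref{linking non depende chosen isotopy}. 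In other words, the paper exploits directly that $Linking_1$ at infinitesimal scale \emph{is} $Torsion_1$, rather than locating via Theorem~\ref{thm intro 1} intermediate points where the two happen to coincide.

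Your route buys a clean separation of concerns once Theorem~\ref{thm intro 1} is available, and the ``delicate point'' you flag (continuity of $w\mapsto Torsion_1((F_t)_t,w,\xi)$) is genuinely no harder than the continuity of the extended $u$ used in the paper. The cost is that you forward-reference the main theorem of Section~3 to prove a proposition placed in Section~2; the paper's argument is self-contained at this stage and avoids that dependency. There is no logical circularity, since the proof of Theorem~\ref{thm intro 1} does not use Proposition~\ref{indipendenza torsione isotopia}, but the paper's ordering is cleaner.
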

\begin{proof}
Let $(F_t)_t$ and $(G_t)_t$ be the corresponding lifts of the isotopies $(f_t)_t$ and $(g_t)_t$ to the plane $\R^2$ such that $F_0=G_0=Id_{\R^2}$. Let $z\in\R^2$ and $\xi\in T_{z}\R^2\setminus\{0\}\cong T_{\bar{z}}\mathbb{A}\setminus\{0\}$. Thanks to the choice of the trivialization, denoting as $\bar{z}\in\mathbb{A}$ the projection of $z$ on the annulus, it holds
$$
Torsion_1((F_t)_t,z,\xi)=Torsion_1((f_t)_t,\bar{z},\xi)
$$
and
$$
Torsion_1((G_t)_t,z,\xi) = Torsion_1((g_t)_t,\bar{z},\xi).
$$
\noindent By Proposition \ref{proprieta torsione} it holds
\begin{equation}\label{section 4 costante k}
Torsion_1((F_t)_t,z,\xi) = Torsion_1((G_t)_t,z,\xi) + 2\pi k
\end{equation}
where $k\in\Z$ does not depend on the point or on the vector since $\R^2$ is connected.\\
\noindent Recall the functions $v,u$, used in Definitions \ref{definizione_torsion} and \ref{definizionelinking}:
\begin{equation*}
\begin{split}
v((F_t)_t)(z,\xi,\cdot):&[0,1]\rightarrow\mathbb{T}\\
&t\mapsto\theta\left( (1,0), DF_t(z)\xi \right)
\end{split}
\end{equation*}
and
\begin{equation*}
\begin{split}
u((F_t)_t)(z,z',\cdot):&[0,1]\rightarrow\mathbb{T}\\
&t\mapsto\theta\left( (1,0), F_t(z')-F_t(z) \right).
\end{split}
\end{equation*}
\noindent Let us look at $z'=z+\xi$. 
\noindent Parametrize the segment $[z,z+\xi]$ by setting for any $s\in[0,1]$
$$
z(s):=z+s\xi.
$$
\noindent Modify now the definitions of functions $u,v$ in the following way:
\begin{equation}\label{linking_notation_modify1}
\begin{split}
u((F_t)_t):&[0,1]\times[0,1]\rightarrow\mathbb{T}\\
&(s,t)\mapsto \theta\left((1,0), F_t(z(s))-F_t(z)\right)\qquad s\neq 0 \\
&(0,t)\mapsto\theta\left((1,0), DF_t(z)\xi\right)
\end{split}
\end{equation}
and
\begin{equation}\label{torsion_notation_modify}
\begin{split}
v((F_t)_t):&[0,1]\times[0,1]\rightarrow\mathbb{T}\\
&(s,t)\mapsto\theta\left((1,0), DF_t(z(s))\xi\right).
\end{split}
\end{equation}
\noindent Observe that both $v(s,t)$ and $u(s,t)$ are continuous functions, by the continuity of the isotopy with respect to the weak ${\cal{C}}^1$ topology in $\text{Diff}^1(\R^2)$.\\
\noindent Since the definition of $u((F_t)_t)$ coincides with that of $v((F_t)_t)$ for $s=0$ and since $u((F_t)_t)$ is continuous, for any time $t$ we have that $v((F_t)_t)(0,t)=u((F_t)_t)(0,t)=\lim_{s\rightarrow 0^+}u((F_t)_t)(s,t)$.\\
The definitions of torsion and linking number do not depend on the chosen lift. So we select continuous determinations $\tilde{v}((F_t)_t)$ and $\tilde{u}((F_t)_t)$ such that $\tilde{v}((F_t)_t)(0,t)=\tilde{u}((F_t)_t)(0,t)$ for any time $t$ and similarly $\tilde{v}((G_t)_t)(0,t)=\tilde{u}((G_t)_t)(0,t)$.\\

\noindent By Proposition \ref{linking non depende chosen isotopy} for any $s\in[0,1]$ it holds
$$
\tilde{u}((F_t)_t)(s,1)-\tilde{u}((F_t)_t)(s,0)=\tilde{u}((G_t)_t)(s,1)-\tilde{u}((G_t)_t)(s,0).
$$
Passing to the limit for $s$ going to $0^+$, we obtain
$$
\tilde{v}((F_t)_t)(0,1)-\tilde{v}((F_t)_t)(0,0)=\tilde{v}((G_t)_t)(0,1)-\tilde{v}((G_t)_t)(0,0),
$$
that is
$$
Torsion_1((F_t)_t,z,\xi) = Torsion_1((G_t)_t,z,\xi).
$$
We conclude that the integer $k$ in \eqref{section 4 costante k} is null.
\end{proof}

\indent With the same techniques, it can be shown that also for a ${\cal{C}}^1$ diffeomorphism over the torus $\mathbb{T}^2$ isotopic to the identity the torsion is independent from the choice of the isotopy. Actually, this independence has been already remarked by B\'eguin and Boubaker in Section 2 in \cite{beguin}.\\

\section{Relation between Torsion and Linking number}\label{sezione relazione link tors}
In \cite{beguin}, the authors provide conditions for which the existence of two points with non-zero linking number implies the existence of a point with non-zero torsion. However, the value and even the sign of the linking number and of the torsion can be different.\\
\noindent Let $x,y\in\R^2$ be points with linking value $l$. We prove the existence of a point with torsion value exactly $l$. In addition we locate such a point on the segment joining $x$ and $y$. We remark that this result can be applied also to the zero value case, since it does not depend on the value of the linking number.

\begin{notazione}
Consider an isotopy $(F_t)_t:[0,1]\rightarrow \text{Diff}^1(\R^2)$ joining the identity to $F_1=F$. With the notation $(F_t)_t$ we refer also to the extended isotopy. We refer to the setting presented in Notation \ref{notation link r2}: we fix the counterclockwise orientation and we use the vector field $X=(1,0)$.\\
\noindent Given two points $x,y\in\R^2$, $x\neq y$, the notation $[x,y]$ refers to the segment joining the points.\\
\noindent Denote a point of the segment as $z(s) := sy + (1-s)x$ for $s\in[0,1]$.
\end{notazione}
\indent The main result concerning linking number and torsion at finite time $t=1$ is then Theorem \ref{thm intro 1}.

\noindent\textit{Sketch of the proof of Theorem \ref{thm intro 1}.}
By contradiction we assume that there is no point $z\in[x,y]$ such that $Torsion_1((F_t)_t,z,y-x)=l$. Then, by the continuity of the function $z\mapsto Torsion_1((F_t)_t,z,y-x)$ and by the connectedness of the segment, one of the following cases occur:
\begin{itemize}
\item[$(i)$] for any $z\in[x,y]$ it holds $Torsion_1((F_t)_t,z,y-x)<l$;
\item[$(ii)$] for any $z\in[x,y]$ it holds $Torsion_1((F_t)_t,z,y-x)>l$.
\end{itemize}
In Section \ref{sezione prova linking-torsion} we show that case $(i)$ leads to a contradiction. Similarly, case $(ii)$ cannot even occur.
\hfill\qed
\\

\indent A modification of the involved isotopy and the use of Theorem \ref{thm intro 1} easily adapt this result for any finite time $n\in\N$. We keep the same notation of Theorem \ref{thm intro 1}.

\begin{corollario}\label{corollario_nacceso_link}
Assume that there exist $n\in\N, n\neq 0$ and $x,y\in\R^2$, $x\neq y$, such that $Linking_n((F_t)_t,x,y)=l\in\R$.\\
\noindent Then there exists a point $z\in[x,y]$ such that
\begin{equation}
Torsion_n\left((F_t)_t,z,y-x\right)=l.
\end{equation}
\end{corollario}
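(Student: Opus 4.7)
The plan is to reduce the finite-time-$n$ statement to the already-established $n=1$ case (Theorem \ref{thm intro 1}) by an affine rescaling of the time parameter. Concretely, I would define a new isotopy $(G_t)_{t\in[0,1]}$ in $\text{Diff}^1(\R^2)$ by $G_t := F_{nt}$, where $F_{nt}$ is obtained from the extension of $(F_t)_t$ to all positive times via the convention $F_t = F_{\{t\}}\circ F^{\lfloor t\rfloor}$ recalled in Section 2. Then $G_0 = Id_{\R^2}$ and $G_1 = F^n$, and $(G_t)$ is continuous in the weak $\mathcal{C}^1$ topology (the only subtle point is at rescaled integer times $t = k/n$ with $1\le k < n$, where continuity follows from $F_0 = Id_{\R^2}$, so that both one-sided limits coincide with $F^k$).

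Next, I would compare the angle functions of $(G_t)$ and $(F_t)$. For the linking angle function,
$$u((G_t))(x,y,s) = \theta\bigl((1,0),\, G_s(y)-G_s(x)\bigr) = u((F_t))(x,y,ns),$$
so choosing continuous determinations that agree at $s=0$ yields $\tilde u((G_t))(x,y,s) = \tilde u((F_t))(x,y,ns)$. Evaluating at $s=1$ and dividing by the length of the time interval gives
$$Linking_1((G_t), x, y) = \tilde u((F_t))(x,y,n)-\tilde u((F_t))(x,y,0) = n\cdot Linking_n((F_t), x, y) = n\, l.$$

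Now apply Theorem \ref{thm intro 1} to the isotopy $(G_t)_{t\in[0,1]}$ and the two points $x,y$ with linking value $nl$: this furnishes a point $z\in[x,y]$ with $Torsion_1((G_t), z, y-x) = n\,l$. The same time-rescaling argument applied to the torsion angle function $v$ gives $v((G_t))(z,y-x,s) = v((F_t))(z,y-x,ns)$, hence $\tilde v((G_t))(z,y-x,s) = \tilde v((F_t))(z,y-x,ns)$ for compatible lifts, and therefore
$$Torsion_n((F_t),z,y-x) = \tfrac{1}{n}\bigl(\tilde v((F_t))(z,y-x,n)-\tilde v((F_t))(z,y-x,0)\bigr) = \tfrac{1}{n}\,Torsion_1((G_t),z,y-x) = l.$$

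I do not expect a genuine obstacle here: the corollary really just records that the conclusion of Theorem \ref{thm intro 1} is invariant under affine reparametrization of time, and all the non-trivial work (the blow-up at the endpoint, the Turning Tangent argument, etc.) has already been done in the proof of the main theorem. The only point that warrants a line of verification is the continuity of the rescaled isotopy $(G_t)$ across the glueing instants $t = k/n$, which is immediate from $F_0 = Id_{\R^2}$.
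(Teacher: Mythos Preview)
Your proposal is correct and follows essentially the same approach as the paper: both define the time-rescaled isotopy $G_t := F_{nt}$, observe that $Linking_1((G_t)_t,x,y) = nl$, apply Theorem \ref{thm intro 1} to obtain $z\in[x,y]$ with $Torsion_1((G_t)_t,z,y-x)=nl$, and convert back to $Torsion_n((F_t)_t,z,y-x)=l$. Your extra remark on continuity at the glueing instants $t=k/n$ is a harmless addition that the paper leaves implicit.
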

\begin{proof}
We are interested in the time interval $[0,n]$. Define the isotopy $(G_t)_{t\in[0,1]}:=(F_{nt})_{t\in[0,1]}$.\\
\noindent Hence we are time-reparametrizing the initial isotopy. It holds $u((G_t)_t,x,y)(t)=u((F_t)_t,x,y)(nt)$. Then, $\tilde{u}((G_t)_t,x,y)(t)$ and $\tilde{u}((F_t)_t,x,y)(nt)$ denote continuous determinations of the same angle function. Since the (finite time) linking number is independent from the choice of the lift (see Proposition \ref{prprieta linking number}), we refer to $\tilde{u}((G_t)_t,x,y)(t)$.\\
\noindent The hypothesis $Linking_n((F_t)_t,x,y) = l$ is then equivalent to ask that $Linking_1((G_t)_t,x,y)=nl$.\\
\noindent By Theorem \eqref{thm intro 1}, there exists $z\in[x,y]$ such that $Torsion_1((G_t)_t,z,y-x)=nl$. For such a $z$ it also holds
\begin{equation}
Torsion_n\left((F_t)_t,z,y-x\right)=l
\end{equation}
and this concludes the proof.
\end{proof}

\noindent We wonder if any such relation is satisfied between asymptotic torsion and asymptotic linking number: can any results as above hold true even when considering \eqref{torsione def limite} in Definition \ref{definizione_torsion} and \eqref{linking number def limite} in Definition \eqref{definizionelinking}?\\
\noindent The answer is positive looking at torsion of $F$-invariant measures, instead of orbits.

\begin{corollario}\label{prop_relazione}
Assume that there exist two points $x,y\in\R^2,x\neq y$ such that $Linking((F_t)_t,x,y)=l\in\R$. Suppose that $\bigcup_{n\in\N}F^n([x,y])$ is relatively compact.\\
\noindent Then there exists a $F$-invariant probability measure $\mu$ such that
\begin{equation*}
Torsion((F_t)_t,\mu)=l.
\end{equation*}
\noindent Moreover, there exist points with torsion greater or equal $l$ and also points with torsion smaller or equal $l$.
\end{corollario}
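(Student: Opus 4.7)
The plan is to construct the $F$-invariant measure $\mu$ by the Krylov--Bogolyubov empirical-measure technique, applied to a sequence of witness points on $[x,y]$ supplied by Corollary~\ref{corollario_nacceso_link}.

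First, set $l_n := Linking_n((F_t)_t, x, y)$, so that $l_n \to l$ by hypothesis, and, for each $n\neq 0$, use Corollary~\ref{corollario_nacceso_link} to pick $z_n \in [x,y]$ with
$$
Torsion_n\bigl((F_t)_t, z_n, y-x\bigr) = l_n.
$$
Let $K := \overline{\bigcup_{n\in\N} F^n([x,y])}$, which is compact by hypothesis. I would then lift everything to the unit tangent bundle: since $Torsion_1((F_t)_t,\cdot,\xi)$ is invariant under positive rescaling of $\xi$, setting $w_n := \bigl(z_n,\,(y-x)/\norma{y-x}\bigr)\in T^1 K$ and using the cocycle identity recalled in the Remark after Definition~\ref{torsionemisura}, the empirical measures
$$
\nu_n := \dfrac{1}{n}\sum_{i=0}^{n-1}\delta_{F_*^i(w_n)}
$$
on the compact set $T^1 K$ satisfy $\int_{T^1 K}Torsion_1((F_t)_t,\cdot,\cdot)\,d\nu_n = l_n$.

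Next, by Prokhorov's theorem I would extract a weakly convergent subsequence $\nu_{n_k}\rightharpoonup\nu$ and check that $\nu$ is $F_*$-invariant via the standard telescoping estimate $\abs{\int\varphi\circ F_*\,d\nu_n-\int\varphi\,d\nu_n}\le \tfrac{2\norma{\varphi}_\infty}{n}$. Since $Torsion_1((F_t)_t,\cdot,\cdot)$ is continuous and bounded on $T^1 K$ (by the compact-open ${\cal C}^1$ continuity of the isotopy), passing to the limit gives $\int Torsion_1\,d\nu = l$. Setting $\mu := \pi_*\nu$, with $\pi:T^1\R^2\to\R^2$ the base projection, yields an $F$-invariant Borel probability measure with compact support. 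Birkhoff's theorem applied to $\nu$ and $Torsion_1$ shows that for $\nu$-a.e.\ $(p,\xi)$ the averages $Torsion_n((F_t)_t,p,\xi)$ converge to an $F_*$-invariant function of integral $l$. Proposition~\ref{torsione cambia vettore stima} then guarantees that if the limit exists for one $\xi$ it exists for every non-zero tangent vector at $p$ and is independent of it; hence $Torsion((F_t)_t,\cdot)$ is defined $\mu$-a.e.\ and
$$
Torsion((F_t)_t,\mu) \,=\, \int Torsion((F_t)_t,p)\,d\mu(p) \,=\, \int Torsion_1\,d\nu \,=\, l.
$$

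The final assertion is immediate from the mean-value interpretation of the integral: since $\mu$ is a probability measure whose integral of $Torsion((F_t)_t,\cdot)$ equals $l$, the function cannot be strictly greater than $l$ (respectively, strictly less than $l$) on a $\mu$-full set, so points of torsion $\ge l$ and of torsion $\le l$ must exist in the support of $\mu$. The most delicate step is the interchange $\lim_k\int Torsion_1\,d\nu_{n_k} = \int Torsion_1\,d\nu$: this relies both on the relative compactness hypothesis (ensuring tightness of $(\nu_n)$ and boundedness of $Torsion_1$ on $T^1 K$) and on the ${\cal C}^1$ continuity of the isotopy (which yields continuity of $Torsion_1$ on the unit tangent bundle); everything else is the standard Krylov--Bogolyubov and Birkhoff package.
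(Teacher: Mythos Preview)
Your argument is correct and follows essentially the same route as the paper: both use Corollary~\ref{corollario_nacceso_link} to produce witness points $z_n\in[x,y]$ with $Torsion_n((F_t)_t,z_n,y-x)=l_n$, form the corresponding empirical measures on the unit tangent bundle over the compact set $K$, pass to an $F_*$-invariant weak limit, and then invoke Birkhoff together with the vector-independence of torsion to identify the torsion of the projected measure as $l$. Your write-up is in places a bit more explicit (Prokhorov, the telescoping estimate for invariance, continuity of $Torsion_1$ on $T^1K$), but the structure and key ideas coincide with the paper's proof.
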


\begin{remark}
If $F$ has compact support, then $\bigcup_{n\in\N}F^n([x,y])$ is always relatively compact.
\end{remark}

\begin{proof}
From our hypothesis
\begin{equation*}
l=Linking((F_t)_t,x,y)=\lim_{n\rightarrow +\infty}Linking_n((F_t)_t,x,y)
\end{equation*}
For any fixed $n\in\N$, denote $l_n:=Linking_n((F_t)_t,x,y)$. By Corollary \eqref{corollario_nacceso_link} there exists $z_n\in[x,y]$ such that
\begin{equation*}
Torsion_n\left((F_t)_t,z_n,y-x\right)=l_n.
\end{equation*}
The notation $\xi$ refers to the vector $y-x$. Consider the following probability measures on the unitary tangent bundle $T^1\R^2$:
\begin{equation}
\tilde{\mu}_n:=\dfrac{1}{n}\sum_{i=0}^{n-1}\delta_{\left(F^i(z_n),\frac{DF^i(z_n)\xi}{\norma{DF^i(z_n)\xi}}\right)}
\end{equation}
where $\delta_{(x,v)}$ denotes the Dirac measure centered on $(x,v)$ in $T^1\R^2$.
\noindent All the supports of these measures $\tilde{\mu}_n$ are contained in the same set
$$
T_{\mathcal{K}}^1\R^2
$$
where
$$
\mathcal{K}:= \overline{\bigcup_{i\in\N}F^i([x,y])}.
$$
From the hypothesis, $\mathcal{K}$ is compact and so is $T_{\mathcal{K}}^1\R^2$.\\
\noindent Up to subsequences, the sequence $(\tilde{\mu}_n)_n$ converges to a probability measure $\tilde{\mu}$ on $T^1\R^2$ which is invariant with respect to the dynamics on the unitary tangent bundle inherited from $F$. The projection $\mu$ of $\tilde{\mu}$ on $\R^2$ is $F$-invariant as well.\\
\noindent Finally, refering to Definition \eqref{torsionemisura} with respect to $\mu$, we have
\begin{equation*}
\begin{split}
Torsion((F_t)_t,\mu) &= \int_{\R^2}Torsion((F_t)_t,x)d\mu(x) = \\
\int_{T^1\R^2}Torsion((F_t)_t,x)d\tilde{\mu}(x,v) &\overset{*}{=}\int_{T^1\R^2}Torsion_1((F_t)_t,x,v)d\tilde{\mu}(x,v)=\\
=\lim_{n\rightarrow+\infty}\int_{T^1\R^2}&Torsion_1((F_t)_t,x,v)d\tilde{\mu}_n(x,v) = \\
=\lim_{n\rightarrow+\infty}\dfrac{1}{n}\sum_{i=0}^{n-1}&Torsion_1\left((F_t)_t,F^i(z_n),\dfrac{DF^i(z_n)\xi}{\norma{DF^i(z_n)\xi}}\right)= \\
=\lim_{n\rightarrow+\infty}Torsion&_n((F_t)_t,z_n,\xi)=\lim_{n\rightarrow+\infty}l_n=l.
\end{split}
\end{equation*}
Equality $*$ is a consequence of Birkhoff's Ergodic Theorem applied to the framework where
$$
F^*:(T^1\R^2,\tilde{\mu})\rightarrow(T^1\R^2,\tilde{\mu})
$$
$$
(x,\xi)\mapsto F^*(x,\xi)=\left(F(x),\dfrac{DF(x)\xi}{\norma{DF(x)\xi}}\right)
$$
is a measure-preserving transformation and $Torsion_1((F_t)_t,\cdot,\cdot)\in L^1(T^1\R^2,\tilde{\mu})$. The time average $Torsion((F_t)_t,\cdot)$ does not depend on the choice of the tangent vector (see Proposition \ref{torsione cambia vettore stima}) and, by Birkhoff's Ergodic Theorem (see Theorem 4.1.2 in \cite{katok}), it exists $\tilde{\mu}$-a.e., is measurable, $F^*$-invariant and such that
$$
\int_{T^1\R^2}Torsion((F_t)_t,x)d\tilde{\mu}(x,v) = \int_{T^1\R^2}Torsion_1((F_t)_t,x,v)d\tilde{\mu}(x,v).
$$

\noindent As an outcome, there exist points with torsion greater or equal $l$ and also points with torsion smaller or equal $l$.\\
\noindent Arguing by contradiction, suppose that every $x\in\R^2$ has $Torsion((F_t)_t,x)$ strictly greater than $l$. Then
\begin{equation*}
l=Torsion((F_t)_t,\mu)=\int_{\R^2}Torsion((F_t)_t,x)d\mu(x)>\int_{\R^2}l\ d\mu(x)=l.
\end{equation*}
This provides the required contradiction. Analogous argument holds assuming that every point has torsion strictly less than $l$.
\end{proof}

\subsection{Some consequences over the torus $\mathbb{T}^2$}
Any diffeomorphism of the torus has an invariant measure with zero torsion: this result was already known by Matsumoto and Nakayama for $\mathcal{C}^{\infty}$ diffeomorphisms. We present here a simpler proof which works also with $\mathcal{C}^1$ diffeomorphisms. Therefore, we weaken the hypothesis required in \cite{matsumoto}.

\begin{notazione}
\noindent Let 
$$
\mathscr{P}:\R^2\rightarrow\mathbb{T}^2
$$
$$
(x,y)\mapsto \mathscr{P}(x,y) = (x\ mod\,2\pi,y\ mod\,2\pi)
$$
be the universal covering of $\mathbb{T}^2$. Denote as $\mathcal{P}(\mathbb{T}^2)$ the set of Borel probability measure over the torus $\mathbb{T}^2$. Fix the counterclockwise orientation and consider as reference vector field $X$ the constant one $(1,0)$.
\end{notazione}
\noindent Let us start by observing that in the case of torus diffeomorphisms the hypothesis of Corollary \ref{prop_relazione} are too strong. Therefore, we state the following
\begin{corollario}\label{prop_relazione_torus}
Let $(f_t)_t$ be an isotopy in $\text{Diff }^1(\mathbb{T}^2)$ joining $Id_{\mathbb{T}^2}$ to $f_1=f$. Let $(F_t)_t$ in $\text{Diff }^1(\R^2)$ be the lift of the isotopy $(f_t)_t$ such that $F_0=Id_{\R^2}$. Assume that there exist two points $x,y\in\R^2,x\neq y$ such that $Linking((F_t)_t,x,y)=l\in\R$. Then there exists a $f$-invariant probability measure $\mu\in \mathcal{P}(\mathbb{T}^2)$ such that $Torsion((f_t)_t,\mu)=l$. Moreover, there exist points in $\mathbb{T}^2$ with torsion greater or equal $l$ and also points with torsion smaller or equal $l$.
\end{corollario}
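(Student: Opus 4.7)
The plan is to mimic the proof of Corollary \ref{prop_relazione}, replacing the (now unavailable) relative compactness hypothesis on $\bigcup_{n\in\N}F^n([x,y])\subset\R^2$ by the automatic compactness of $\mathbb{T}^2$ (and of the unit tangent bundle $T^1\mathbb{T}^2\cong\mathbb{T}^2\times S^1$). All the work in the $\R^2$ case ended up boiling down to extracting a weak-$*$ limit of empirical measures on the unit tangent bundle; on the torus, this extraction is free.

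First I would set $\xi:=y-x$ and $l_n:=Linking_n((F_t)_t,x,y)$, so that $l_n\to l$ as $n\to+\infty$. By Corollary \ref{corollario_nacceso_link}, for every $n\in\N$, $n\neq 0$, there exists $z_n\in[x,y]\subset\R^2$ with
\[
Torsion_n((F_t)_t,z_n,\xi)=l_n.
\]
Let $\bar z_n:=\mathscr{P}(z_n)\in\mathbb{T}^2$. Since $(F_t)_t$ is the lift of $(f_t)_t$ and the canonical trivializations of $T\R^2$ and $T\mathbb{T}^2$ are compatible with $D\mathscr{P}$, the linear maps $DF_t(z_n)$ and $Df_t(\bar z_n)$ agree on $\R^2$, so
\[
Torsion_n((F_t)_t,z_n,\xi)=Torsion_n((f_t)_t,\bar z_n,\xi)=l_n.
\]
On the compact space $T^1\mathbb{T}^2$ I would now consider the empirical measures
\[
\tilde\mu_n:=\frac{1}{n}\sum_{i=0}^{n-1}\delta_{\bigl(f^i(\bar z_n),\,\frac{Df^i(\bar z_n)\xi}{\|Df^i(\bar z_n)\xi\|}\bigr)}.
\]
By Banach--Alaoglu (or Prokhorov, the sequence being automatically tight), a subsequence of $(\tilde\mu_n)_n$ converges weakly-$*$ to a Borel probability measure $\tilde\mu$ on $T^1\mathbb{T}^2$; the standard Krylov--Bogolyubov argument shows that $\tilde\mu$ is invariant under the natural lift $f_*:T^1\mathbb{T}^2\to T^1\mathbb{T}^2$ of $f$, and hence its projection $\mu\in\mathcal{P}(\mathbb{T}^2)$ is $f$-invariant.

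The remaining step is the computation $Torsion((f_t)_t,\mu)=l$. The function $(x,v)\mapsto Torsion_1((f_t)_t,x,v)$ is continuous on $T^1\mathbb{T}^2$ (joint continuous determinations of the angle function exist in simply connected neighbourhoods of $T^1\mathbb{T}^2\times[0,1]$) and therefore bounded. Hence, passing to the weak-$*$ limit and then invoking Birkhoff's Ergodic Theorem for the measure-preserving system $(T^1\mathbb{T}^2,f_*,\tilde\mu)$ exactly as in Corollary \ref{prop_relazione}, one gets
\[
Torsion((f_t)_t,\mu)=\int_{T^1\mathbb{T}^2}\!\!Torsion_1((f_t)_t,x,v)\,d\tilde\mu=\lim_{n\to+\infty}Torsion_n((f_t)_t,\bar z_n,\xi)=\lim_{n\to+\infty}l_n=l.
\]

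The two ``moreover'' assertions follow by the same contradiction argument used at the end of Corollary \ref{prop_relazione}: if $Torsion((f_t)_t,\bar z)>l$ (respectively $<l$) $\mu$-almost everywhere, integrating against $\mu$ gives $l>l$ (respectively $l<l$). The only real obstacle relative to the $\R^2$ setting is the bookkeeping identifying the lifted torsion on $\R^2$ with the torsion on $\mathbb{T}^2$; the analytical heart of the argument, namely the extraction of an invariant measure, is actually easier here because compactness of $\mathbb{T}^2$ is given for free.
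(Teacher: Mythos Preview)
Your proof is correct and follows essentially the same approach as the paper's own proof: it retraces the argument of Corollary~\ref{prop_relazione}, using the compactness of $T^1\mathbb{T}^2$ in place of the relative compactness hypothesis to extract a weak-$*$ limit of the empirical measures $\tilde\mu_n$, and then applies Birkhoff's theorem exactly as before. Your additional remarks (continuity of $Torsion_1$, the Krylov--Bogolyubov justification for invariance) make explicit some points the paper leaves implicit, but the structure is the same.
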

\noindent The proof of Corollary \ref{prop_relazione_torus} retraces the ideas of the proof of Corollary \ref{prop_relazione}.
\begin{proof}
As in the proof of Corollary \ref{prop_relazione}, denote $l_n=Linking_n((F_t)_t,x,y)$ and by hypothesis it holds $\lim_{n\rightarrow+\infty}l_n=l=Linking((F_t)_t,x,y)$. By Corollary \ref{corollario_nacceso_link} for any $n\in\N, n\neq 0$ there exists $z_n\in[x,y]$ such that $Torsion_n((F_t)_t,z_n,y-x)=l_n$.\\
\noindent Thanks to the choice of the trivialization we have that
$$
Torsion_n((f_t)_t,\mathscr{P}(z_n),y-x)=Torsion_n((F_t)_t,z_n,y-x)=l_n.
$$
For simplicity denote $\mathscr{P}(z_n)$ as $\bar{z}_n$. Consider now the probability measures on the unitary tangent bundle $T^1\mathbb{T}^2$:
$$
\tilde{\mu}_n:=\dfrac{1}{n}\sum_{i=0}^{n-1}\delta_{\left( f^i(\bar{z}_n),\frac{Df^i(\bar{z}_n)(y-x)}{\norma{Df^i(\bar{z}_n)(y-x)}} \right)}.
$$
Being $T^1\mathbb{T}^2$ compact, up to subsequences, $(\tilde{\mu}_n)_n$ converges to $\tilde{\mu}$ which is a probability measure on $T^1\mathbb{T}^2$. The measure $\tilde{\mu}$ is invariant with respect to the dynamics on $T^1\mathbb{T}^2$ and its projection on $\mathbb{T}^2$ $\mu\in\mathcal{P}(\mathbb{T}^2)$ is $f$-invariant.\\
\noindent Repeating the ideas in the proof of Corollary \ref{prop_relazione}, we have
$$
Torsion((f_t)_t,\mu) = \int_{\mathbb{T}^2} Torsion((f_t)_t,x)\ d\mu(x)=\int_{T^1\mathbb{T}^2}Torsion_1((f_t)_t,x,v)\ d\tilde{\mu}(x,v)=
$$
$$
=\lim_{n\rightarrow+\infty}\dfrac{1}{n}\sum_{i=0}^{n-1}Torsion_1\left( (f_t)_t,f^i(\bar{z}_n),\dfrac{Df^i(\bar{z}_n)(y-x)}{\norma{Df^i(\bar{z}_n)(y-x)}} \right)= \lim_{n\rightarrow+\infty}Torsion_n((f_t)_t,\bar{z}_n,y-x)=l.
$$
\noindent We easily deduce the existence of points in $\mathbb{T}^2$ with torsion greater or equal $l$ (respectively smaller or equal $l$).
\end{proof}

\noindent We then deduce as a corollary the result by Matsumoto and Nakayama discussed above.

\begin{corollario}
Let $(f_t)_t$ be an isotopy in $\text{Diff }^1(\mathbb{T}^2)$ joining the identity $Id_{\mathbb{T}^2}$ to $f_1=f$.\\
\noindent Then, there exists a $f$-invariant Borel probability measure $\mu\in\mathcal{P}(\mathbb{T}^2)$ of null torsion.
\end{corollario}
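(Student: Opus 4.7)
The plan is to exhibit two distinct points in $\R^2$ whose asymptotic linking number (with respect to a canonical lift of the isotopy) equals $0$, and then invoke Corollary \ref{prop_relazione_torus} with $l=0$ to produce the desired invariant measure.

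First I would lift $(f_t)_t$ to the isotopy $(F_t)_t$ in $\text{Diff}^1(\R^2)$ with $F_0=Id_{\R^2}$. Since each $f_t$ is isotopic to the identity on $\mathbb{T}^2$ through $(f_s)_{s\in[0,t]}$, the lift $F_t$ commutes with every deck transformation of the covering $\mathscr{P}:\R^2\rightarrow \mathbb{T}^2$; in particular, for every $t\in[0,1]$ and every $z\in\R^2$,
\[
F_t\bigl(z+(2\pi,0)\bigr) = F_t(z) + (2\pi,0).
\]
This identity extends to all $t\in\R_+$ by the way the isotopy is extended (via composition with iterates of $f$, which itself commutes with the deck transformations).

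Next I would pick any $x\in\R^2$ and set $y:=x+(2\pi,0)$, so that $x\neq y$. By the previous identity,
\[
F_t(y)-F_t(x) = (2\pi,0) \qquad \forall\, t\geq 0.
\]
Hence the angle function $u((F_t)_t)(x,y,\cdot)$ is identically equal to $\theta((1,0),(2\pi,0))=0\in\mathbb{T}$, and the continuous determination $\tilde{u}((F_t)_t)(x,y,\cdot)$ normalized to vanish at $0$ is identically zero. Consequently,
\[
Linking_n((F_t)_t,x,y) = 0 \qquad \forall\, n\in\N,\ n\neq 0,
\]
and the asymptotic linking number exists and equals $0$:
\[
Linking((F_t)_t,x,y) = 0.
\]

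Finally, Corollary \ref{prop_relazione_torus} applied with $l=0$ yields an $f$-invariant Borel probability measure $\mu\in\mathcal{P}(\mathbb{T}^2)$ with $Torsion((f_t)_t,\mu)=0$, as required. There is no real obstacle here: once the existence of a canonical lift commuting with deck transformations is noted, the pair $(x,x+(2\pi,0))$ gives an almost tautological null-linking pair, and Corollary \ref{prop_relazione_torus} does the rest of the work.
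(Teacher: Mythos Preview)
Your proof is correct and follows essentially the same approach as the paper: lift the isotopy, use the deck-transformation commutation to see that $F_t(y)-F_t(x)\equiv(2\pi,0)$ for $y=x+(2\pi,0)$, deduce that the linking number is identically zero, and apply Corollary~\ref{prop_relazione_torus} with $l=0$. The only cosmetic difference is that the paper fixes the specific pair $z_1=(0,0),\,z_2=(2\pi,0)$ rather than an arbitrary $x$.
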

\begin{proof}
Let $(F_t)_t$ be the isotopy obtained as the lift of the isotopy $(f_t)_t$ such that $F_0=Id_{\R^2}$. For any point $(x,y)\in\R^2$
\begin{equation}\label{isotopia lift toro r2}
F_t(x+2\pi k_1,y+2\pi k_2)=F_t(x,y)+(2\pi k_1,2\pi k_2)\qquad\forall (k_1,k_2)\in\Z^2,\forall t\in\R_+.
\end{equation}
Consider now the points $z_1=(0,0),z_2=(2\pi,0)\in\R^2$. For a fixed $n\in\N,n\neq 0$ look at
\begin{equation*}
Linking_n((F_t)_t,z_1,z_2)=\dfrac{1}{n}\left( \tilde{u}((F_t)_t)(z_1,z_2,n) - \tilde{u}((F_t)_t)(z_1,z_2,0) ) \right).
\end{equation*}
\noindent Since \eqref{isotopia lift toro r2} holds for every $t\geq 0$, the vector $F_t((2\pi,0))-F_t((0,0))$ (in whose direction we are interested) remains horizontal and so $Linking_n((F_t)_t,z_1,z_2)\equiv\frac{0}{n}$. By the arbitrariness of $n\in\N$ we deduce that $Linking((F_t)_t,z_1,z_2)=0$. Applying Corollary \ref{prop_relazione_torus} to the points $z_1,z_2$, we conclude that there exists $\mu\in\mathcal{P}(\mathbb{T}^2)$ which is $f$-invariant and such that $Torsion((f_t)_t,\mu)=0$.
\end{proof}

\subsection{Proof of case $(i)$ of Theorem \ref{thm intro 1}}\label{sezione prova linking-torsion}

In this section we assume that case $(i)$ of the sketch of the proof of Theorem \ref{thm intro 1} (presented in Section \ref{sezione relazione link tors}) holds, that is for any $z\in[x,y]$ we have $Torsion_1((F_t)_t,z,y-x)<l=Linking_1((F_t)_t,x,y)$. We are going to find a contradiction, deducing that this case cannot occur.\\
\noindent By continuity of the function and by compactness of the segment, we assume that there exists $\varepsilon>0$ such that for any point in $[x,y]$
\begin{equation*}
Torsion_1\left((F_t)_t,z,y-x\right)<l-\varepsilon.
\end{equation*}


\begin{notazione}
Denote
\begin{equation*}
\xi := y-x
\end{equation*}
and parametrize the segment $[x,y]$ as follows:
\begin{equation*}
[0,1]\ni s\mapsto z(s):= sy+(1-s)x\in[x,y]\subset\R^2.
\end{equation*}
\end{notazione}

\begin{notazione}\label{notazione_di_modifica_definizione_u_v}





We use the notation introduced in \eqref{linking_notation_modify1} and \eqref{torsion_notation_modify} in order to modify the angle functions $u,v$. From these, we define linking number and torsion just along points of the segment $[x,y]$.\\
\noindent Since $u,v$ are continuous and for any $t$ it holds $u(0,t)=v(0,t)$, there exist continuous lifts $\tilde{u},\tilde{v}$ of the functions $u,v$, respectively, such that $\tilde{u}(0,t)=\tilde{v}(0,t)$.

\end{notazione}

By hypothesis for any $s\in[0,1]$
\begin{equation}\label{eq numeronumero}
Torsion_1((F_t)_t,z(s),\xi)<l-\varepsilon=Linking_1((F_t)_t,x,y)-\varepsilon.
\end{equation}
Refering to definitions \eqref{linking_notation_modify1} and \eqref{torsion_notation_modify}, inequality \eqref{eq numeronumero} becomes
\begin{equation}\label{estimate_du_torsionneg_et_linkpos}
\tilde{v}(s,1)-\tilde{v}(s,0)<\tilde{u}(1,1)-\tilde{u}(1,0)-\varepsilon
\end{equation}
for any $s\in[0,1]$.

\subsubsection{Modification of the isotopy $(F_t)_t$}

First, we modify the given isotopy $(F_t)_t$ to obtain an isotopy $(H_t)_t$ such that:
\begin{itemize}
\item the point $x$ is fixed for $(H_t)_t$, that is $H_t(x)=x$ for any $t$;
\item the linking number of $x,y$ with respect to $(H_t)_t$ is positive, while the torsion of any point of $[x,y]$ with respect to $(H_t)_t$ is negative.
\end{itemize}
In other words, we want to pass in a rotated and translated frame.

\begin{lemma}
Let $(F_t)_{t\in[0,1]}$ be an isotopy in $\text{Diff}^1(\R^2)$ joining $Id_{\R^2}$ to $F_1=F$. Consider $x,y\in\R^2$, $x\neq y$ such that, for a fixed $\varepsilon>0$, for any $s\in[0,1]$
\begin{equation}\label{equazione37inequality}
Torsion_1((F_t)_t,z(s),\xi)<Linking_1((F_t)_t,x,y)-\varepsilon.
\end{equation}

\noindent Then, there exists an isotopy $(H_t)_{t\in[0,1]}$ in $\text{Diff}^1(\R^2)$, such that:
\begin{itemize}
\item $H_0=Id_{\R^2}$ and $H:=H_1$;
\item for any $s\in[0,1]$
\begin{equation*}
Torsion_1((H_t)_t,z(s),\xi)\leq-\dfrac{\varepsilon}{2}<0<\dfrac{\varepsilon}{2}\leq Linking_1((H_t)_t,x,y);
\end{equation*}
\item $H_t(x)=x$ for any $t\in[0,1]$.
\end{itemize}
\end{lemma}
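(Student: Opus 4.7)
The plan is a two-step post-composition of $(F_t)_t$: first with a time-dependent translation that freezes the orbit of $x$, then with a time-dependent rotation centered at $x$ that shifts both the torsion along $[x,y]$ and the linking number of $x,y$ by the same additive constant. The key observation is that translations leave both torsion and linking number invariant, while post-composing with a rotation of angle $\beta$ produces the same $+\beta$ shift in the oriented angle of both $DF_t\xi$ and $F_t(y)-F_t(x)$; since by \eqref{equazione37inequality} the torsion is uniformly smaller than the linking on the whole segment, a single suitable rotation speed produces negative torsion and positive linking simultaneously.

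Concretely, I would first set $H_t^{(1)} := \tau_{x - F_t(x)} \circ F_t$. Since $F_0(x)=x$, one has $H_0^{(1)} = Id_{\R^2}$, and the compact-open ${\cal C}^1$-continuity of $t\mapsto F_t$ makes $(H_t^{(1)})_t$ an isotopy in $\text{Diff}^1(\R^2)$ with $H_t^{(1)}(x)=x$ for every $t$. Translations have identity differential and cancel in differences, hence $DH_t^{(1)}(z) = DF_t(z)$ and $H_t^{(1)}(z)-H_t^{(1)}(z') = F_t(z)-F_t(z')$ for all $z,z'$, so both sides of \eqref{equazione37inequality} are unchanged when $(F_t)_t$ is replaced by $(H_t^{(1)})_t$. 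Then I would set $\alpha := l - \varepsilon/2$, with $l := Linking_1((F_t)_t,x,y)$, and define $H_t := {\cal R}(x,-\alpha t)\circ H_t^{(1)}$. By construction $H_0 = Id_{\R^2}$, $H_t(x)=x$ for every $t$, and $(H_t)_t$ is still an isotopy.

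It remains to verify the torsion and linking estimates. Writing ${\cal R}(x,-\alpha t)(a) = x + R_{-\alpha t}(a-x)$ with $R_\beta$ the linear rotation of angle $\beta$, one has
$$DH_t(z)\,\xi = R_{-\alpha t}\,DF_t(z)\,\xi, \qquad H_t(y)-H_t(x) = R_{-\alpha t}\bigl(F_t(y)-F_t(x)\bigr).$$
Since multiplication by $R_\beta$ rotates the oriented angle of any nonzero vector by $\beta$, choosing continuous determinations that agree at $t=0$ with those of $(F_t)_t$ yields $\tilde{v}((H_t)_t)(z,\xi,t) = \tilde{v}((F_t)_t)(z,\xi,t) - \alpha t$ and $\tilde{u}((H_t)_t)(x,y,t) = \tilde{u}((F_t)_t)(x,y,t) - \alpha t$. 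Evaluating at $t=1$ and invoking \eqref{equazione37inequality}, for every $s\in[0,1]$ we obtain $Torsion_1((H_t)_t, z(s), \xi) < (l-\varepsilon) - \alpha = -\varepsilon/2$ and $Linking_1((H_t)_t, x, y) = l - \alpha = \varepsilon/2$, giving the three required properties. The only point requiring attention — and not a genuine obstacle — is matching the continuous determinations at $t=0$ so that the shift $-\alpha t$ propagates correctly across $[0,1]$.
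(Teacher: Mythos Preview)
Your proof is correct and follows the same overall strategy as the paper: first post-compose with the translation $\tau_{x-F_t(x)}$ to freeze $x$, then post-compose with a time-dependent rotation centered at $x$ to shift both the torsion and the linking by the same additive amount.

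The only difference is in the choice of rotation angle. You rotate by the \emph{linear} angle $-\alpha t$ with $\alpha=l-\varepsilon/2$, the midpoint between the linking value $l$ and the uniform torsion upper bound $l-\varepsilon$; this immediately gives $Linking_1((H_t)_t,x,y)=\varepsilon/2$ and $Torsion_1((H_t)_t,z(s),\xi)<-\varepsilon/2$, which is slightly stronger than required. The paper instead rotates by $-\Theta(t)-t\varepsilon/2$, where $\Theta(t)=\max_{s}\bigl(\tilde v(s,t)-\tilde v(s,0)\bigr)$ is the running supremum of the torsion variation along the segment; this more elaborate choice forces $\tilde V(s,t)-\tilde V(s,0)\le -t\varepsilon/2$ for \emph{every} $t\in(0,1]$, not just $t=1$, and then recovers the linking bound by evaluating the hypothesis at the maximizer $S$ of $\Theta(1)$. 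Your argument is cleaner for the lemma as stated; the paper's version buys additional negativity of the torsion at all intermediate times, a property that is not actually invoked in the remainder of the proof of Theorem~\ref{thm intro 1}.
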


\begin{proof}
Define the following continuous function
\begin{equation}\label{def_Theta}
\begin{split}
\Theta : [0,1]&\rightarrow\R \\
\Theta(t) :=\sup_{s\in[0,1]} \left( \tilde{v}(s,t)-\tilde{v}(s,0)\right)&  = \max_{s\in[0,1]}\left( \tilde{v}(s,t)-\tilde{v}(s,0)\right)
\end{split}
\end{equation}

\noindent We remark that $\Theta(0)=0$. The new isotopy is then obtained as follows:
\begin{equation*}
(H_t)_{t\in[0,1]} := {\cal{R}}\left(x,-\Theta(t)-t\frac{\varepsilon}{2}\right)\circ\tau_{x-F_t(x)}\circ (F_t)_t
\end{equation*}
where ${\cal{R}}\left(x,\psi\right)$ denotes the rotation of angle $\psi$ centered at $x$ and $\tau_v$ denotes the translation of vector $v$.

\noindent The point $x$ is fixed for the isotopy $(H_t)_t$. Denote as $U,V$ the functions defined in \eqref{linking_notation_modify1} and \eqref{torsion_notation_modify} with respect to $(H_t)_t$, that is
\begin{equation*}
U:[0,1]\times\R\rightarrow\mathbb{T}
\end{equation*}
\begin{eqnarray}\label{linking_modifiediso1}
(s,t)&&\mapsto\theta\left((1,0),H_t(z(s))-H_t(x)\right)=\theta\left((1,0),H_t(z(s))-x\right) \quad\text{ for $s\neq 0$},\\\label{linking_modifiediso2}
(0,t) &&\mapsto \theta\left((1,0),DH_t(x)\xi\right)
\end{eqnarray}
and
\begin{equation*}
V:[0,1]\times\R\rightarrow\mathbb{T}
\end{equation*}
\begin{equation}\label{torsion_modifiediso}
\qquad (s,t)\mapsto\theta\left((1,0),DH_t(z(s))\xi\right)
\end{equation}
where $\theta$ denotes the oriented angle between the two vectors.

\noindent Observe that $U,V$ are continuous and that, for any $t$, $U(0,t)=V(0,t)$.


\noindent Define then the quantities $\tilde{U},\tilde{V}$ from $\tilde{u},\tilde{v}$ as:
\begin{equation}\label{Utildenuovalift}
\tilde{U}(s,t) = \tilde{u}(s,t)-\Theta(t)-t\dfrac{\varepsilon}{2}
\end{equation}
\begin{equation}\label{Vtildenuovalift}
\tilde{V}(s,t) = \tilde{v}(s,t) -\Theta(t)-t\dfrac{\varepsilon}{2}.
\end{equation}

\noindent These functions are continuous determinations of the angle functions $U$ and $V$, respectively. 

From the definition of $\Theta$ in \eqref{def_Theta}, for every $s\in[0,1]$ and for every $t\in(0,1]$, it follows
\begin{equation*}
\tilde{V}(s,t)-\tilde{V}(s,0)\leq -t\dfrac{\varepsilon}{2}<0.
\end{equation*}
On the other hand, by hypothesis \eqref{estimate_du_torsionneg_et_linkpos}, for any $s\in[0,1]$ it holds
\begin{equation}\label{attenzione label}
\tilde{V}(s,1)-\tilde{V}(s,0)\leq \tilde{U}(1,1)-\tilde{U}(1,0)-\varepsilon.
\end{equation}
Let $S\in[0,1]$ be a point at which the \emph{maximum} $\Theta(1)$ is achieved (see \eqref{def_Theta}), i.e.
$$
\Theta(1)=\tilde{v}(S,1)-\tilde{v}(S,0).
$$
For such $S$ we have $\tilde{V}(S,1)-\tilde{V}(S,0)=-\frac{\varepsilon}{2}$ and \eqref{attenzione label} still holds true. Therefore
\begin{equation*}
-\dfrac{\varepsilon}{2}\leq \tilde{U}(1,1)-\tilde{U}(1,0)-\varepsilon \quad\Rightarrow\quad \tilde{U}(1,1)-\tilde{U}(1,0)\geq \dfrac{\varepsilon}{2}>0.
\end{equation*}

\noindent Hence, for any $s\in[0,1]$
\begin{equation}\label{inequality_key_newiso}
\tilde{V}(s,1)-\tilde{V}(s,0)\leq -\dfrac{\varepsilon}{2}<0<\dfrac{\varepsilon}{2}\leq \tilde{U}(1,1)-\tilde{U}(1,0).
\end{equation}
\end{proof}

\begin{notazione}
We will conserve this notation of $U,V,\tilde{U},\tilde{V}$ throughout the whole section, until the conclusion of the proof.
\end{notazione}

\subsubsection{Sign concordance of Linking and Torsion for small $s$}


\begin{lemma}
Let $\tilde{U}$ and $\tilde{V}$ be the functions introduced in \eqref{Utildenuovalift} and \eqref{Vtildenuovalift}. There exists $s_0\in(0,1)$ such that for all $s\in[0,s_0]$ it holds
\begin{equation}
\tilde{U}(s,1)-\tilde{U}(s,0)\leq -\dfrac{\varepsilon}{4}<0<\tilde{U}(1,1)-\tilde{U}(1,0).
\end{equation}
\end{lemma}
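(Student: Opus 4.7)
The plan is to exploit the fact that at $s=0$ the functions $\tilde{U}$ and $\tilde{V}$ coincide (by construction in Notation \ref{notazione_di_modifica_definizione_u_v} and the analogous choice of lifts for the modified isotopy $(H_t)_t$), so that the negative bound obtained for $\tilde{V}$ in \eqref{inequality_key_newiso} transfers immediately to $\tilde{U}(0,\cdot)$; continuity in $s$ then propagates this bound to a whole half-open neighborhood of $0$.

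More precisely, first I would observe that, since $H_t(x)=x$ for every $t$, the angle function $U(s,t)$ given by \eqref{linking_modifiediso1}--\eqref{linking_modifiediso2} extends continuously at $s=0$ by the value $\theta((1,0),DH_t(x)\xi)=V(0,t)$. Choosing the continuous determinations $\tilde{U},\tilde{V}$ so that $\tilde{U}(0,t)=\tilde{V}(0,t)$ for all $t$, I get in particular
\begin{equation*}
\tilde{U}(0,1)-\tilde{U}(0,0)=\tilde{V}(0,1)-\tilde{V}(0,0)\leq -\dfrac{\varepsilon}{2},
\end{equation*}
the inequality being exactly the left-hand side of \eqref{inequality_key_newiso} evaluated at $s=0$.

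Next, I would use the continuity of the map
\begin{equation*}
[0,1]\ni s\mapsto \tilde{U}(s,1)-\tilde{U}(s,0)\in\R,
\end{equation*}
which is a direct consequence of the joint continuity of $\tilde{U}$ on $[0,1]\times[0,1]$ (itself inherited from the continuity of $U$ via the standard lift argument). Since this function takes the value $\tilde{U}(0,1)-\tilde{U}(0,0)\leq -\varepsilon/2$ at $s=0$, there exists $s_0\in(0,1)$ such that for every $s\in[0,s_0]$
\begin{equation*}
\bigl|\bigl(\tilde{U}(s,1)-\tilde{U}(s,0)\bigr)-\bigl(\tilde{U}(0,1)-\tilde{U}(0,0)\bigr)\bigr|<\dfrac{\varepsilon}{4},
\end{equation*}
and therefore
\begin{equation*}
\tilde{U}(s,1)-\tilde{U}(s,0)\leq -\dfrac{\varepsilon}{2}+\dfrac{\varepsilon}{4}=-\dfrac{\varepsilon}{4}.
\end{equation*}

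Finally, the strict positivity $\tilde{U}(1,1)-\tilde{U}(1,0)\geq\varepsilon/2>0$ is exactly the right-hand inequality in \eqref{inequality_key_newiso}, already established in the construction of $(H_t)_t$. Combining the two estimates yields the desired chain. I expect no real obstacle here: the only subtle point is the justification that the lifts $\tilde{U},\tilde{V}$ can be chosen to agree along $\{0\}\times[0,1]$, but this was built into the notation set-up via the equality $U(0,t)=V(0,t)$ and the uniqueness of continuous lifts once a single value is fixed.
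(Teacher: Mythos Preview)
Your proof is correct and follows essentially the same approach as the paper: both use $\tilde{U}(0,t)=\tilde{V}(0,t)$ together with \eqref{inequality_key_newiso} at $s=0$ to obtain $\tilde{U}(0,1)-\tilde{U}(0,0)\leq -\varepsilon/2$, then invoke continuity of $s\mapsto\tilde{U}(s,1)-\tilde{U}(s,0)$ to produce the neighborhood $[0,s_0]$, and finally cite the right-hand side of \eqref{inequality_key_newiso} for the positivity at $s=1$. Your version is slightly more explicit about the $\varepsilon/4$ step, but there is no substantive difference.
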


\begin{proof}
By definition of $\tilde{U},\tilde{V}$ (see \eqref{Utildenuovalift} and \eqref{Vtildenuovalift}) it holds
$$
\tilde{U}(0,1)-\tilde{U}(0,0)=\tilde{V}(0,1)-\tilde{V}(0,0).
$$
Recalling the first inequality of \eqref{inequality_key_newiso}, we have
$$
\tilde{V}(0,1)-\tilde{V}(0,0)\leq-\dfrac{\varepsilon}{2}.
$$
By the continuity of the function $s\mapsto\tilde{U}(s,1)-\tilde{U}(s,0)$, we conclude that there exists $s_0\in(0,1)$ small enough such that
$$
\tilde{U}(s,1)-\tilde{U}(s,0)\leq -\dfrac{\varepsilon}{4}
$$
for any $s\in[0,s_0]$.
\end{proof}

\subsubsection{Contradiction by using the Turning Tangent Theorem}\label{parte_centrale_dimostrazione}

To sum up, we are considering an isotopy $(H_t)_{t\in[0,1]}$ in $\text{Diff}^1(\R^2)$ such that:
\begin{itemize}
\item $H_0=Id_{\R^2}$ and $H_1=H$;
\item the point $x\in\R^2$ is fixed with respect to $(H_t)_{t\in[0,1]}$;
\item for any $s\in[0,1]$,
\begin{equation}\label{cidicequalcheipotesi}
\tilde{V}(s,1)-\tilde{V}(s,0)<0<\tilde{U}(1,1)-\tilde{U}(1,0);
\end{equation}
\item for any $s<s_0$,
\begin{equation}\label{assicura_esistenza_sbar}
\tilde{U}(s,1)-\tilde{U}(s,0)<-\dfrac{\varepsilon}{4}<0<\tilde{U}(1,1)-\tilde{U}(1,0).
\end{equation}
\end{itemize}

\noindent By eventually changing the reference system on the plane, assume that $x$ is the origin and that the first vector of the canonical basis coincides with $\xi=y-x$.\\
\noindent Denote
\begin{equation}\label{definizione_sbar}
\bar{s}:= \min_{s\in(0,1)}\{s :\ \tilde{U}(s,1)-\tilde{U}(s,0)=0\}.
\end{equation}
The corresponding $z(\bar{s})\in[x,y]$ is the first point of the segment for which the lift of the angle associated to $H_1(z(s))$ is zero, i.e. $\tilde{U}(\bar{s},1)-\tilde{U}(\bar{s},0)=0$.\\
\noindent Such $\bar{s}$ exists by inequality \eqref{assicura_esistenza_sbar} and by continuity of $\tilde{U}$.\\
\noindent Recall that $\tilde{U}(s,1)-\tilde{U}(s,0)$ does not depend on the chosen lift. It is important considering $\bar{s}$ as the \emph{first} point of intersection of the image of the segment at time $t=1$ with the first coordinate axis (which is the segment at time $t=0$). Otherwise we could have no control on the image of the tangent vector through the isotopy.

The proof is divided into 3 cases: starting with the simpler one, we then move on to the most general case.

\begin{itemize}

\begin{figure}[h]
\centering
\includegraphics[scale=.25]{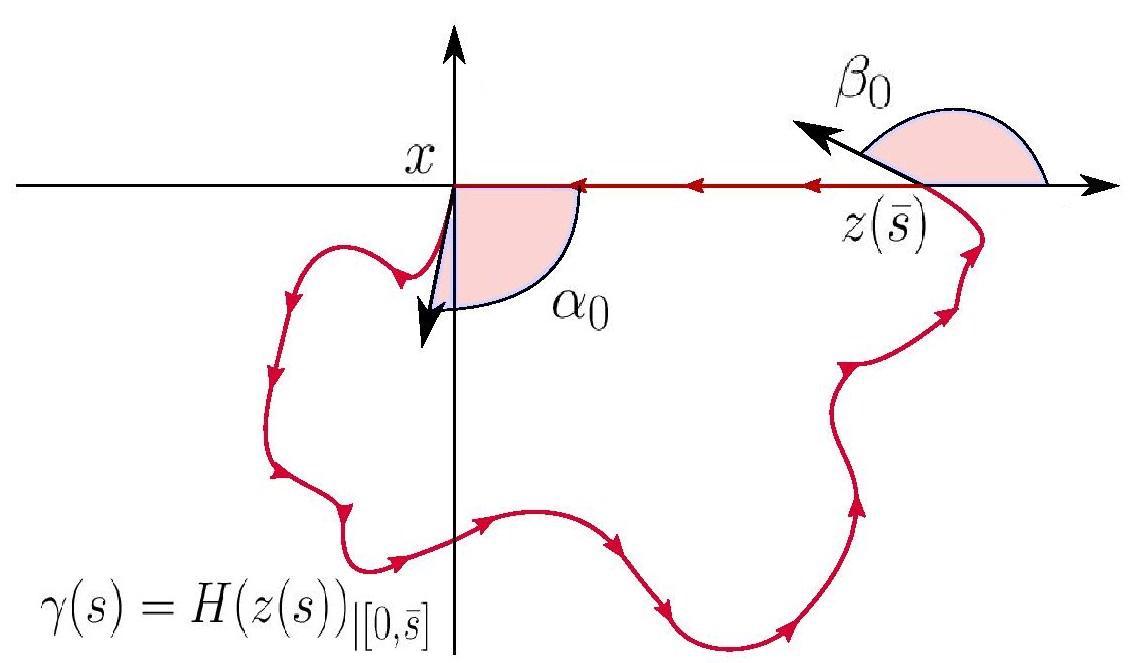}
\caption{The first case.}
\label{caso1}
\end{figure}

\item[\emph{First case:}] As a first simpler case, consider the situation presented in Figure \eqref{caso1}. That is to say, suppose that
\begin{itemize}
\item $\alpha_0=\tilde{U}(0,1)-\tilde{U}(0,0)=\tilde{V}(0,1)-\tilde{V}(0,0)\in(-2\pi,0)$;
\item the curve made up of $H(z(s))_{\lvert [0,\bar{s}]}$ and the segment $[z(\bar{s}),x]$ is a simple, closed, piecewise regular, parametrized curve.
\end{itemize}

Denote
$$
\gamma(s)=H(z(s))_{s\in[0,\bar{s}]}.
$$
\noindent According to this notation, the quantity $\tilde{V}(s,1)-\tilde{V}(s,0)$ is a measure of the angle between the first coordinate axis direction vector and $\dot{\gamma}(s)$.\\
\noindent By hypothesis -the first inequality of \eqref{cidicequalcheipotesi}- for any $s\in[0,1]$ we have
\begin{equation}\label{inequality 64 nella versione vecchia}
\tilde{V}(s,1)-\tilde{V}(s,0)<0.
\end{equation}

\indent The angle $V(\bar{s},1)-V(\bar{s},0)$ admits a measure $\beta_0\in[0,\pi]$. Indeed, in a neighborhood of $z(\bar{s})$, for $s<\bar{s}$, the curve $\gamma(s)$ crosses the first coordinate axis from the bottom up. So, the tangent vector $\dot{\gamma}(\bar{s})$ has a non negative second coordinate and lies in the upper half-plane.\\
\noindent Look at the continuous determination $\tilde{V}(\bar{s},1)-\tilde{V}(\bar{s},0)$: we have
\begin{equation}
\tilde{V}(\bar{s},1)-\tilde{V}(\bar{s},0)=\beta_0 +2\pi k\qquad k\in\Z.
\end{equation}

\noindent By inequality \eqref{inequality 64 nella versione vecchia}, necessarily
\begin{equation}\label{condition_k_negativo}
k\leq -1.
\end{equation}

\indent Since the curve made up of $\gamma(s)$ and $[z(\bar{s}),x]$ is simple, closed and piecewise regular, we can apply the Turning Tangent Theorem on it (see Chapter 4, Section 5 in \cite{docarmo}). We obtain
\begin{equation*}
\begin{split}
\left( (\tilde{V}(\bar{s},1)-\tilde{V}(\bar{s},0)) - ( \tilde{V}(0,1)-\tilde{V}(0,0) ) \right) +  \\
+ \left( \pi - \beta_0 \right) + \left( \alpha_0+\pi \right) = 2\pi
\end{split}
\end{equation*}
that is
\begin{equation*}
\beta_0+2\pi k -\alpha_0 +\pi-\beta_0 +\alpha_0 +\pi = 2\pi(1+k)=2\pi.
\end{equation*}
This last equality implies $k=0$ and contradicts \eqref{condition_k_negativo}.

\begin{figure}[h]
\centering
\includegraphics[scale=.25]{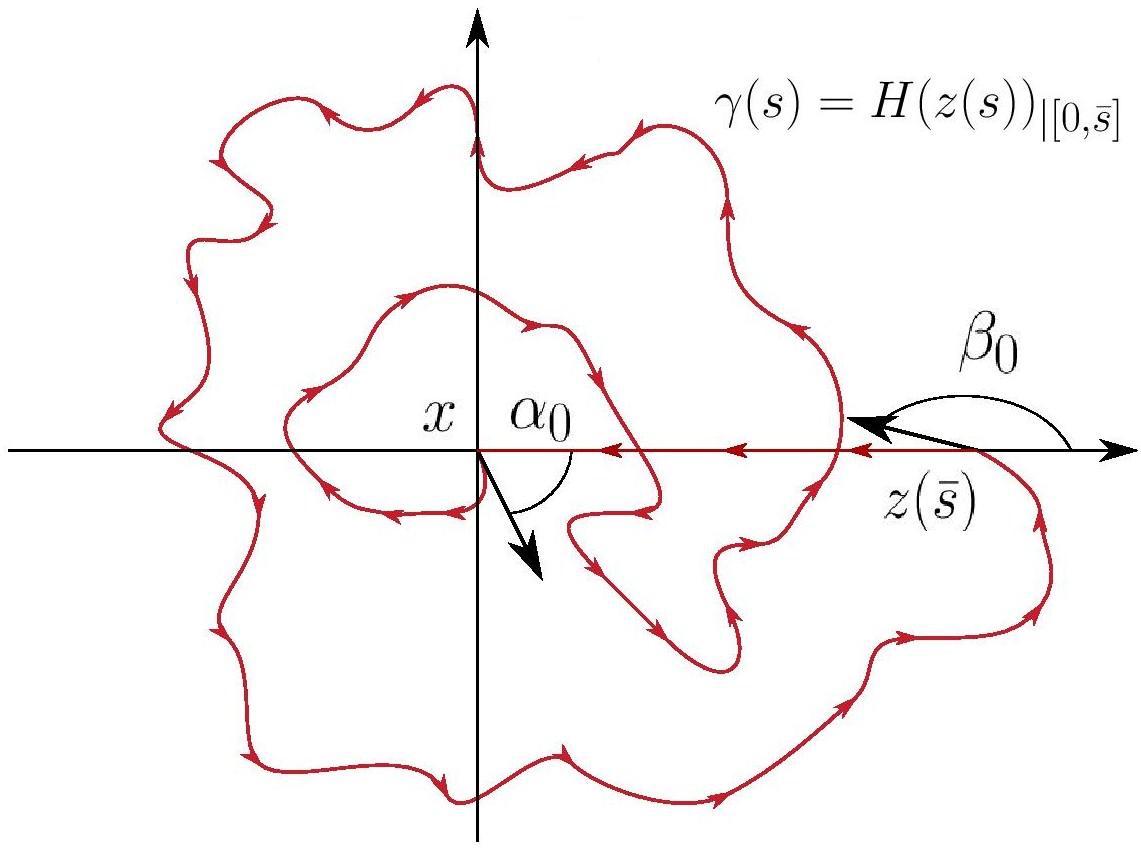}
\caption{The second case.}
\label{caso2}
\end{figure}

\item[\emph{Second case:}] Consider the case presented in Figure \eqref{caso2}. We allow the curve made up of $\gamma(s):=H(z(s))_{\lvert[0,\bar{s}]}$ and the segment $[z(\bar{s}),x]$ to have self-intersections, but we require some regularity conditions at the origin.\footnote{These conditions will be precised later.}

Define the function $\Gamma:[0,\bar{s}]\rightarrow\R^+\times\R$ as
\begin{equation}
\Gamma(s)=\left(\Gamma_1(s),\Gamma_2(s)\right) = \left( r(s), \tilde{U}(s,1)-\tilde{U}(s,0)\right)
\end{equation}
where $r(s) = \norma{H(z(s))-x}\in\R^+$.\\
\noindent Denote
\begin{equation*}
\begin{split}
P: &\ \R^+\times\R\rightarrow\R^2 \\
& (r,\theta)\mapsto (r\,\cos\theta, r\,\sin\theta).
\end{split}
\end{equation*}
Notice that $P_{\vert\R^+_*\times\R}$ is the universal covering of $\R^2\setminus\{(0,0)\}$. Since $P\circ\Gamma=\gamma$, then $\Gamma$ is a lift of $\gamma$ through $P$.
\noindent Identifying the plane $\R^2$ with the complex one $\C$, we have
$$
\gamma(s)=\Gamma_1(s) e^{i\Gamma_2(s)}.
$$
In other words, $(\Gamma_1(s),\Gamma_2(s))$ provide some polar ``coordinates''.

\noindent By hypothesis \eqref{assicura_esistenza_sbar} and by definition of $\bar{s}$ in \eqref{definizione_sbar}, it holds
\begin{equation}
\tilde{U}(s,1)-\tilde{U}(s,0)=\Gamma_2(s)\leq 0\qquad\forall s\in[0,\bar{s}].
\end{equation}

\noindent Therefore, the curve $\Gamma$ lies on the low quarter of the half-plane $\R^+\times\R$.
\noindent Precisely
\begin{align*}
\Gamma_1(s)>0,\qquad\Gamma_2(s)<0\qquad\forall s\in(0,\bar{s}), \\
\Gamma_1(0)=0,\qquad \Gamma_2(0)< 0, \\
\Gamma_1(\bar{s})=\norma{z(\bar{s})-x},\qquad \Gamma_2(\bar{s})=0.
\end{align*}

\begin{assumption}\label{ipotesi in piu secondo caso}
Throughout this second case, assume that $\Gamma$ is sufficiently regular at the origin, that is there exists
\begin{equation*}
\dot{\Gamma}(0) := \lim_{s\rightarrow 0^+} \dot{\Gamma}(s)\neq 0.
\end{equation*}
\end{assumption}

\begin{figure}[h]
\centering
\includegraphics[scale=.23]{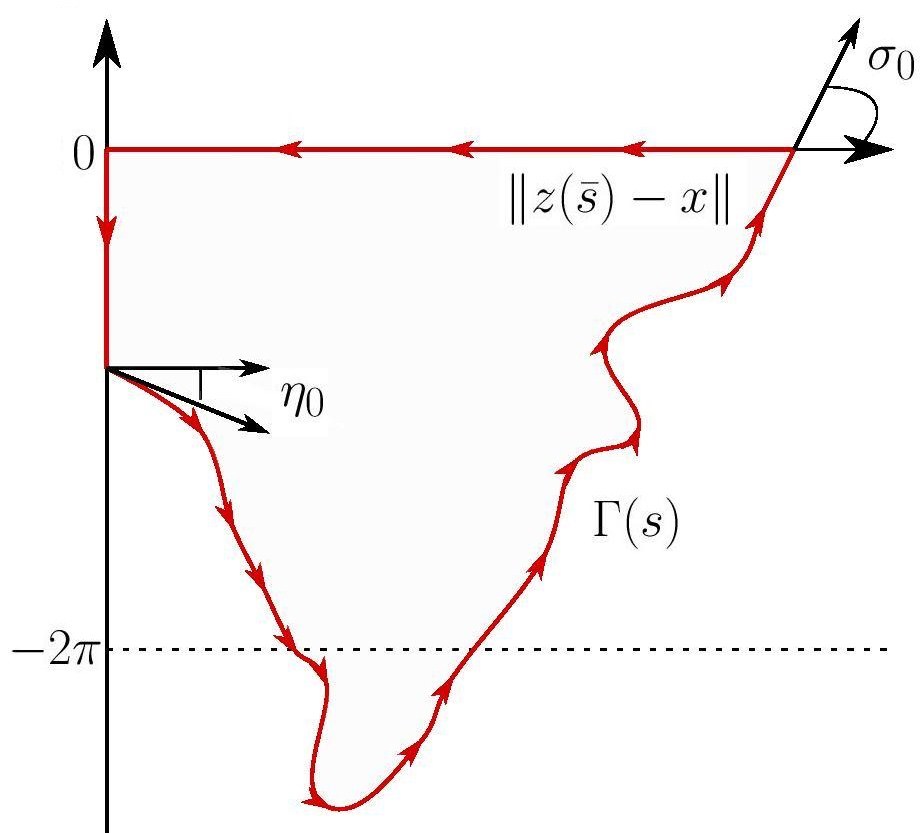}
\caption{The function $\Gamma(s)$ in the second case.}
\label{aggiungicitfig1}
\end{figure}

\begin{notazione}\label{notazione intro curva caso 2}
Consider the curve in $\R^+\times\R$, made up of
\begin{itemize}
\item[$(i)$] $\Gamma(s)$ for $s\in[0,\bar{s}]$;
\item[$(ii)$] the horizontal segment $\{0\}\times[0,r(\bar{s})]$, followed with decreasing radius;
\item[$(iii)$] the vertical segment $[\tilde{U}(0,1)-\tilde{U}(0,0),0]\times\{0\}$, followed downward.
\end{itemize}
This curve, thanks to Assumption \ref{ipotesi in piu secondo caso} and thanks to the definition of $\bar{s}$ in \eqref{definizione_sbar}, is a simple, closed, piecewise regular curve (see Figure \ref{aggiungicitfig1}).
\end{notazione}


The vector $\dot{\Gamma}(0)$ is oriented to the right in the plane $\R^+\times\R$. Hence, the angle between the first coordinate axis direction vector and $\dot{\Gamma}(0)$ admits a measure $\eta_0\in[-\frac{\pi}{2},\frac{\pi}{2}]$.\\
\noindent Denote as $\sigma_0$ the measure contained in the interval $[0,\pi]$ of the angle between the first coordinate axis direction vector and $\dot{\Gamma}(\bar{s})$. Such a measure exists since in a neighborhood of $\Gamma(\bar{s})$ the curve $\Gamma$ crosses the first coordinate axis from the bottom up and so the tangent vector $\dot{\Gamma}(\bar{s})$ lies then in the upper half-plane. 

\begin{notazione}\label{denotiamo gli angoli e i lift di Gamma}
Denote as
\begin{equation*}
\prec(\dot{\Gamma}):[0,\bar{s}]\rightarrow\mathbb{T}
\end{equation*}
the oriented angle function between the first coordinate axis direction vector and the vector $\dot{\Gamma}(s)$.\\
The notation $\tilde{\prec}(\dot{\Gamma}):[0,\bar{s}]\rightarrow\R$ refers to the continuous determination of the angle function $\prec(\dot{\Gamma})$ such that $\tilde{\prec}(\dot{\Gamma}(0))=\eta_0\in [-\frac{\pi}{2},\frac{\pi}{2}]$.
\end{notazione}

\noindent Since $\sigma_0$ and $\tilde{\prec}(\dot{\Gamma}(\bar{s}))$ are lifts of the same oriented angle, we have
$$
\tilde{\prec}(\dot{\Gamma}(\bar{s}))=\sigma_0+2\pi j\qquad j\in\Z.
$$
Apply now the Turning Tangent Theorem to the closed curve highlighted in Notation \ref{notazione intro curva caso 2}. We obtain
\begin{equation*}
(\sigma_0+2\pi j-\eta_0)+(\pi-\sigma_0)+(\dfrac{\pi}{2})+(\eta_0+\dfrac{\pi}{2})=2\pi
\end{equation*}
and so
\begin{equation}\label{lajdeveesserenulla}
2\pi(1+j)=2\pi\qquad\Leftrightarrow\qquad j=0.
\end{equation}
Hence
\begin{equation}\label{ultima etichetta nuova}
\tilde{\prec}(\dot{\Gamma}(\bar{s}))=\sigma_0.
\end{equation}

Let us look now at the relation between the tangent vectors of $\Gamma(\cdot)$ and the tangent ones of $\gamma(\cdot)=H(z(\cdot))$.\\
\noindent By hypothesis \eqref{cidicequalcheipotesi}, it holds
\begin{equation}\label{hypoth_torsion}
\tilde{V}(s,1)-\tilde{V}(s,0)<0\qquad\forall s\in[0,\bar{s}]
\end{equation}
where $\tilde{V}(s,1)-\tilde{V}(s,0)$ is a continuous determination of the angle function between the first coordinate axis direction vector and $\dot{\gamma}(s)$.\\
\noindent Denote
\begin{equation}\label{quelque info sur Vtilde}
\tilde{V}(\bar{s},1)-\tilde{V}(\bar{s},0)=\beta_0+2\pi k,k\in\Z
\end{equation}
where $\beta_0$ is the measure of the angle $V(\bar{s},1)-V(\bar{s},0)$ in $[0,\pi]$. Such a measure exists since in a neighborhood of $z(\bar{s})$ the curve $\gamma$ crosses the first coordinate axis from the bottom up. Hence, the vector $\dot{\gamma}(\bar{s})$ has non negative second coordinate.\\
\noindent From \eqref{hypoth_torsion}, it holds then
\begin{equation}\label{condizionesullak}
k\leq -1.
\end{equation}

We need now the following:
\begin{proposizione}\label{stimaangoli}
Let $I\subset \R$ be an interval and let $M,N$ be two $2$-dimensional oriented Riemannian manifolds. Denote the tangent projections as $\pi_M:TM\rightarrow M$, $\pi_N:TN\rightarrow N$. 
\noindent Let $f:M\rightarrow N$ be a local diffeomorphism which preserves the orientation and let $J_1:I\rightarrow TM$, $J_2:I\rightarrow TM$ be continuous functions such that
\begin{equation}\label{stanno nello stesso spazio tangente}
\pi_M\circ J_1=\pi_M\circ J_2.
\end{equation}
Suppose that, for any $t\in I$, $J_i(t)\neq 0$, $i=1,2$ and let $\theta:I\rightarrow\R$ be a continuous determination of the angle function between the image vectors $J_1,J_2$.

\noindent Then, there exists a continuous determination $\Theta:I\rightarrow\R$ of the angle function between the image vectors $Df\circ J_1, Df\circ J_2$ such that
\begin{equation}
\abs{\theta(s)-\Theta(s)}<\pi\qquad\forall s\in I
\end{equation}
\end{proposizione}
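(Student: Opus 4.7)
The plan rests on the observation that $Df_p$, being an orientation-preserving linear isomorphism between two oriented $2$-dimensional Euclidean spaces, acts on the unit circle of directions in $T_pM$ as an orientation-preserving circle homeomorphism which commutes with the antipodal map (because $Df_p(-v)=-Df_p(v)$). Fixing direct frames locally along the relevant paths in $TM$ and $TN$, this means that any lift $\tilde A_p\colon\R\to\R$ of the induced map on angles is an increasing homeomorphism with the $\pi$-equivariance property
\[
  \tilde A_p(x+\pi) = \tilde A_p(x) + \pi,
\]
exactly as in item $(iii)$ of Proposition \ref{PROP IMPO vV}.

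The analytic core of the proof is the following lemma: any increasing homeomorphism $\tilde A\colon\R\to\R$ satisfying $\tilde A(x+\pi)=\tilde A(x)+\pi$ obeys
\[
  |\tilde A(b)-\tilde A(a)-(b-a)| < \pi \qquad\text{for all } a,b\in\R.
\]
Indeed, setting $g(x):=\tilde A(x)-x$ yields a $\pi$-periodic function, so $g(b)-g(a)$ is unchanged upon replacing $b$ by $b-k\pi$ for any integer $k$; we may therefore reduce to the case $|b-a|<\pi$. In that case monotonicity of $\tilde A$ together with $\tilde A(a+\pi)-\tilde A(a)=\pi$ forces both $\tilde A(b)-\tilde A(a)$ and $b-a$ to lie in the same half-open interval of length $\pi$, whence their difference lies strictly in $(-\pi,\pi)$.

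To build $\Theta$, I use the fact that $I$ is an interval. Since $J_1$ is a continuous non-vanishing section of $TM$ along $p(s):=\pi_M\circ J_1(s)=\pi_M\circ J_2(s)$, the angle function of $J_1$ with respect to any continuously chosen direct frame along $p(\cdot)$ admits a continuous lift $\alpha\colon I\to\R$; then $\beta(s):=\alpha(s)+\theta(s)$ is automatically a continuous lift of the angle function of $J_2$, compatible with the given $\theta$. I define
\[
  \Theta(s) := \tilde A_{p(s)}(\beta(s))-\tilde A_{p(s)}(\alpha(s)).
\]
This quantity is independent of the choice of $2\pi$-translate of $\tilde A_{p(s)}$; it depends continuously on $s$ because $p\mapsto Df_p$ is continuous and hence $p\mapsto A_p\in\mathrm{Homeo}^+(S^1)$ admits continuous lifts on small neighbourhoods; and by construction $\Theta(s)$ is a continuous determination of the oriented angle between $Df(J_1(s))$ and $Df(J_2(s))$. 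Applying the lemma with $a=\alpha(s)$, $b=\beta(s)$ gives the desired inequality $|\Theta(s)-\theta(s)|<\pi$.

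The main obstacle is the analytic lemma. The bound $\pi$, rather than the trivial bound $2\pi$ one obtains from choosing any continuous determination, depends crucially on the $\pi$-equivariance $\tilde A(x+\pi)=\tilde A(x)+\pi$, which in turn relies on the \emph{linearity} of $Df_p$, not merely on its being a local diffeomorphism. This is precisely the extra rigidity that makes the proposition useful in the later application to Theorem \ref{thm intro 1}.
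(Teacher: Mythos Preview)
Your proof is correct, but it follows a genuinely different route from the paper's.

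The paper's argument is purely qualitative and topological. It observes that the pair $(J_1(s),J_2(s))$ falls into one of four mutually exclusive cases---positively colinear, negatively colinear, direct basis, indirect basis---and that an orientation-preserving linear isomorphism preserves the case. From this it extracts a single key lemma: $\bar\theta(s)-\bar\Theta(s)\neq\pi\ \mathrm{mod}\ 2\pi$ for every $s$. One then fixes any $s_0\in I$, chooses the unique value $\Theta(s_0)\in(\theta(s_0)-\pi,\theta(s_0)+\pi)$ projecting to $\bar\Theta(s_0)$, and extends $\Theta$ as the continuous determination with that initial value; since $\theta-\Theta$ is continuous and never equals $\pm\pi$, it stays in $(-\pi,\pi)$.

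Your approach is more constructive and analytic: you realise $Df_{p(s)}$ as a continuous family of $\pi$-equivariant increasing homeomorphisms of $\R$, and prove the quantitative lemma $|\tilde A(b)-\tilde A(a)-(b-a)|<\pi$ directly from monotonicity and $\pi$-equivariance. This yields $\Theta$ by an explicit formula rather than by an initial-value-plus-continuity argument. The trade-off is that you must set up continuous frames along $p$ and $f\circ p$ and argue that the family $s\mapsto\tilde A_{p(s)}$ can be chosen continuously (both of which are fine since $I$ is an interval), whereas the paper sidesteps all of this machinery. On the other hand, your version makes the link with Proposition~\ref{PROP IMPO vV} explicit and isolates the analytic core as a clean stand-alone lemma about lifts of linear circle maps, which is conceptually appealing.
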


\noindent We postpone the proof of this Proposition to Appendix \ref{appendice prova prop}.

\noindent We apply Proposition \ref{stimaangoli} to $I=(0,\bar{s}]\subset\R$, $M=(\R^+\setminus\{0\})\times\R$, $N=\R^2\setminus\{0\}$ and
$$
f:(\R^+\setminus\{0\})\times\R\rightarrow\R^2\setminus\{0\}
$$
$$
(r,\theta)\mapsto(r\cos(\theta),r\sin(\theta)).
$$
Observe that the determinant of $Df(r,\theta)$ is equal to $r$ and so always positive: this assures us that $f$ is a local diffeomorphism which preserves the orientation.
\noindent Consider
\begin{equation*}
\begin{split}
J_1:(0,\bar{s}]\rightarrow TM&=(\R^+\setminus\{0\})\times\R\times\R^2\\
s\mapsto&(\Gamma(s),(1,0))
\end{split}
\end{equation*}
and
\begin{equation*}
\begin{split}
J_2:(0,\bar{s}]\rightarrow TM&=(\R^+\setminus\{0\})\times\R\times\R^2\\
s\mapsto&(\Gamma(s),\dot{\Gamma}(s)).
\end{split}
\end{equation*}
Then
\begin{equation*}
\begin{split}
Df\circ J_1&:(0,\bar{s}]\rightarrow TN=(\R^2\setminus\{0\})\times\R^2\\
s&\mapsto\left(\gamma(s),\dfrac{\gamma(s)}{\norma{\gamma(s)}}\right)
\end{split}
\end{equation*}
and
\begin{equation*}
\begin{split}
Df\circ J_2&:(0,\bar{s}]\rightarrow TN=(\R^2\setminus\{0\})\times\R^2\\
s&\mapsto(\gamma(s),\dot{\gamma}(s)).
\end{split}
\end{equation*}
The function $\tilde{\prec}(\dot{\Gamma})$ introduced in Notation \ref{denotiamo gli angoli e i lift di Gamma} is a continuous determination of the angle function between $J_1(s)$ and $J_2(s)$. By Assumption \ref{ipotesi in piu secondo caso}, the function $\tilde{\prec}(\dot{\Gamma})$ is continuous at $s=0$.\\
\noindent Remind that, by our choice
\begin{equation}\label{AAA}
\tilde{\prec}(\dot{\Gamma}(0))=\eta_0\in\left[-\dfrac{\pi}{2},\dfrac{\pi}{2}\right].
\end{equation}
\noindent Observe that $s\mapsto\left(\tilde{V}(s,1)-\tilde{V}(s,0)\right) - \left(\tilde{U}(s,1)-\tilde{U}(s,0)\right)$ is a continuous determination of the angle function between $Df\circ J_1(s)$ and $Df\circ J_2(s)$. By our choice of $\tilde{V},\tilde{U}$, for any $t$ we have $\tilde{V}(0,t)=\tilde{U}(0,t)$ and in particular
\begin{equation}\label{BBB}
\left(\tilde{V}(0,1)-\tilde{V}(0,0)\right) - \left( \tilde{U}(0,1)-\tilde{U}(0,0)\right) =0.
\end{equation}
\noindent From \eqref{AAA}, \eqref{BBB} and the continuity of the involved functions, there exists $S>0$ small enough such that
$$
\abs{ \tilde{\prec}(\dot{\Gamma}(S)) - \left( \left( \tilde{V}(S,1)-\tilde{V}(S,0) \right) - \left( \tilde{U}(S,1)-\tilde{U}(S,0) \right) \right) }<\pi.
$$
By Proposition \ref{stimaangoli}, we deduce that for any $s\in(0,\bar{s}]$
$$
\abs{ \tilde{\prec}(\dot{\Gamma}(s)) - \left( \left( \tilde{V}(s,1)-\tilde{V}(s,0) \right) - \left( \tilde{U}(s,1)-\tilde{U}(s,0) \right) \right) }<\pi.
$$
\noindent In particular, at $s=\bar{s}$ by \eqref{ultima etichetta nuova}, \eqref{quelque info sur Vtilde} and \eqref{definizione_sbar}
\begin{equation}\label{stellina}
\abs{\tilde{\prec}(\dot{\Gamma}(\bar{s}))-\left( (\tilde{V}(\bar{s},1)-\tilde{V}(\bar{s},0))-(\tilde{U}(\bar{s},1)-\tilde{U}(\bar{s},0)) \right)}=\abs{ \sigma_0 -\beta_0 - 2\pi k }<\pi.
\end{equation}

\begin{claim}\label{quantobellstoclaim}
The quantity $\sigma_0-\beta_0$ is in the open interval $(-\pi,\pi)$.
\end{claim}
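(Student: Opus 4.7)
The plan is to exploit the explicit relation $\gamma=P\circ\Gamma$ between the curve in Cartesian coordinates and its polar lift, and to compare the two tangent vectors $\dot{\gamma}(\bar{s})$ and $\dot{\Gamma}(\bar{s})$ at the crossing time $\bar{s}$. Since by construction both $\sigma_0$ and $\beta_0$ belong to $[0,\pi]$, the difference $\sigma_0-\beta_0$ already lies in $[-\pi,\pi]$; the only task is to exclude the two extremal possibilities $(\sigma_0,\beta_0)=(\pi,0)$ and $(\sigma_0,\beta_0)=(0,\pi)$, which are precisely the cases giving $|\sigma_0-\beta_0|=\pi$.

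The central computation is at $s=\bar{s}$. There one has $\Gamma_2(\bar{s})=0$ by definition of $\bar{s}$, while $\Gamma_1(\bar{s})=\norma{\gamma(\bar{s})-x}>0$. The differential of $P(r,\theta)=(r\cos\theta,r\sin\theta)$ at $(\Gamma_1(\bar{s}),0)$ is the diagonal matrix with entries $1$ and $\Gamma_1(\bar{s})$, so differentiating the identity $\gamma=P\circ\Gamma$ at $\bar{s}$ yields
\[
\dot{\gamma}(\bar{s}) \;=\; \bigl(\dot{\Gamma}_1(\bar{s}),\ \Gamma_1(\bar{s})\,\dot{\Gamma}_2(\bar{s})\bigr).
\]
Since $\Gamma_1(\bar{s})>0$, each component of $\dot{\gamma}(\bar{s})$ has the same sign as the corresponding component of $\dot{\Gamma}(\bar{s})$. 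In other words, $\dot{\gamma}(\bar{s})$ and $\dot{\Gamma}(\bar{s})$ lie in the same open quadrant of $\R^2$ or, in the degenerate cases, on the same coordinate half-axis.

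From here the conclusion is immediate. If $\sigma_0=0$, then $\dot{\Gamma}(\bar{s})$ is a positive multiple of $(1,0)$; by the relation above $\dot{\gamma}(\bar{s})$ is then also a positive multiple of $(1,0)$, which forces $\beta_0=0$. Symmetrically, $\sigma_0=\pi$ forces $\beta_0=\pi$. Neither $(\pi,0)$ nor $(0,\pi)$ can therefore occur, so $\sigma_0-\beta_0\in(-\pi,\pi)$. The only point worth checking is that $\dot{\Gamma}(\bar{s})\neq 0$, so that $\sigma_0$ is genuinely well defined; this follows because $\dot{\gamma}(\bar{s})=DH(z(\bar{s}))\cdot(y-x)\neq 0$ and $DP$ is invertible at $(\Gamma_1(\bar{s}),0)$. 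This is essentially the only subtle step, since Assumption~\ref{ipotesi in piu secondo caso} only explicitly provides the corresponding property at $s=0$; everything else reduces to the sign bookkeeping above.
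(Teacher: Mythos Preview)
Your proof is correct and follows essentially the same line as the paper's. The paper also reduces to excluding $(\sigma_0,\beta_0)=(\pi,0)$ and $(0,\pi)$, observing that $\beta_0$ is the angle between $Df(\Gamma(\bar s))(1,0)$ and $Df(\Gamma(\bar s))\dot\Gamma(\bar s)$ (with $f=P$), and then invokes the linearity of $Df(\Gamma(\bar s))$ to rule out the situation where $(1,0)$ and $\dot\Gamma(\bar s)$ are negatively colinear while their images are positively colinear (and vice versa). Your argument makes this concrete by computing $DP$ at $(\Gamma_1(\bar s),0)$ explicitly as $\mathrm{diag}(1,\Gamma_1(\bar s))$ and reading off the sign preservation directly; this is the same mechanism, just spelled out.
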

\textit{Proof of the claim.} Because $\sigma_0,\beta_0\in[0,\pi]$, the difference $\sigma_0-\beta_0$ is in $[-\pi,\pi]$. Arguing by contradiction, suppose that $\sigma_0-\beta_0=\pi$, that is $\sigma_0=\pi,\beta_0=0$. The measure $\sigma_0$ is a lift of the angle between $(1,0)$ and $\dot{\Gamma}(\bar{s})$, while $\beta_0$ is a lift of the angle between $\gamma(\bar{s})/\norma{\gamma(\bar{s})}$ and $\dot{\gamma}(\bar{s})$, which are the vectors $Df(\Gamma(\bar{s}))(1,0)$ and $Df(\Gamma(\bar{s}))\dot{\Gamma}(\bar{s})$. Since $Df(\Gamma(\bar{s}))$ is a linear function and by inequality \eqref{stellina}, this case cannot occur. Similarly the case $\sigma_0-\beta_0=-\pi$ is excluded.
\hfill\qed

Since $\sigma_0-\beta_0\in(-\pi,\pi)$ and by \eqref{stellina}, we deduce that $k=0$. This inequality contradicts condition \eqref{condizionesullak} and we conclude.


\item[\emph{Third case:}] Finally, consider the most general case, presented in Figure \eqref{caso3}. We allow now the vector $\dot{\Gamma}(0)$ not to exist or to be null.

\begin{figure}[h]
\centering
\includegraphics[scale=.3]{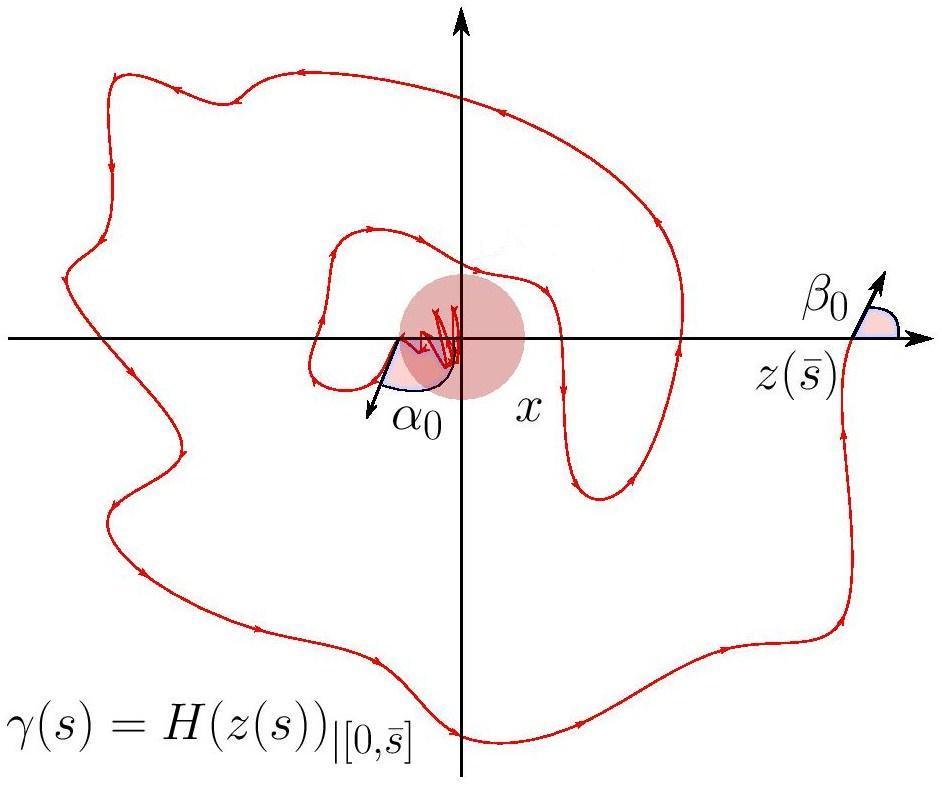}
\caption{The most general case.}
\label{caso3}
\end{figure}

The Turning Tangent Theorem can no more be applied on the curve used in the second case.\\
\noindent Fix $\rho\in (0,\Gamma_1(\bar{s}))$ and consider the vertical line $r\equiv\rho$ in $\R^+\times\R$.

\noindent The notations $\Gamma_1(\cdot),\Gamma_2(\cdot)$ refer to the first and second coordinates, respectively, of the curve $\Gamma$ in $\R^+\times\R$. Define then
\begin{equation}
s_{\rho}:=\max_{s\in[0,\bar{s}]} \{ s :\ \Gamma_1(s)=\rho \}.
\end{equation}
This is a \emph{maximum} since $\Gamma_1$ is a continuous function considered on a compact interval $[0,\bar{s}]$ where $\Gamma_1(0)=0$ and $\Gamma_1(\bar{s})>\rho$.

\noindent Observe that
\begin{equation*}
\lim_{\rho\rightarrow 0}s_{\rho}=0
\end{equation*}
by the continuity of the function $\Gamma_1(\cdot)$, the compactness of the interval involved and the fact that $s=0$ is the only point for which the first coordinate projection of the curve vanishes.



Denote as $\eta_0$ the measure of the angle between the first coordinate axis direction vector and $\dot{\Gamma}(s_{\rho})$ in the interval $[-\frac{\pi}{2},\frac{\pi}{2}]$. This choice is possible since by definition of $s_{\rho}$ the vector $\dot{\Gamma}(s_{\rho})$ is oriented to the right.\\
\noindent Let
$$
\prec(\dot{\Gamma}):[s_{\rho},\bar{s}]\rightarrow\mathbb{T}
$$
denote the oriented angle between the first coordinate axis direction vector and the vector $\dot{\Gamma}(s)$ and denote
$$
\tilde{\prec}(\dot{\Gamma}):[s_{\rho},\bar{s}]\rightarrow\R
$$
the continuous determination of the angle function such that $\tilde{\prec}(\dot{\Gamma}(s_{\rho}))=\eta_0\in[-\frac{\pi}{2},\frac{\pi}{2}]$.\\

\begin{figure}[h]
\centering
\includegraphics[scale=.25]{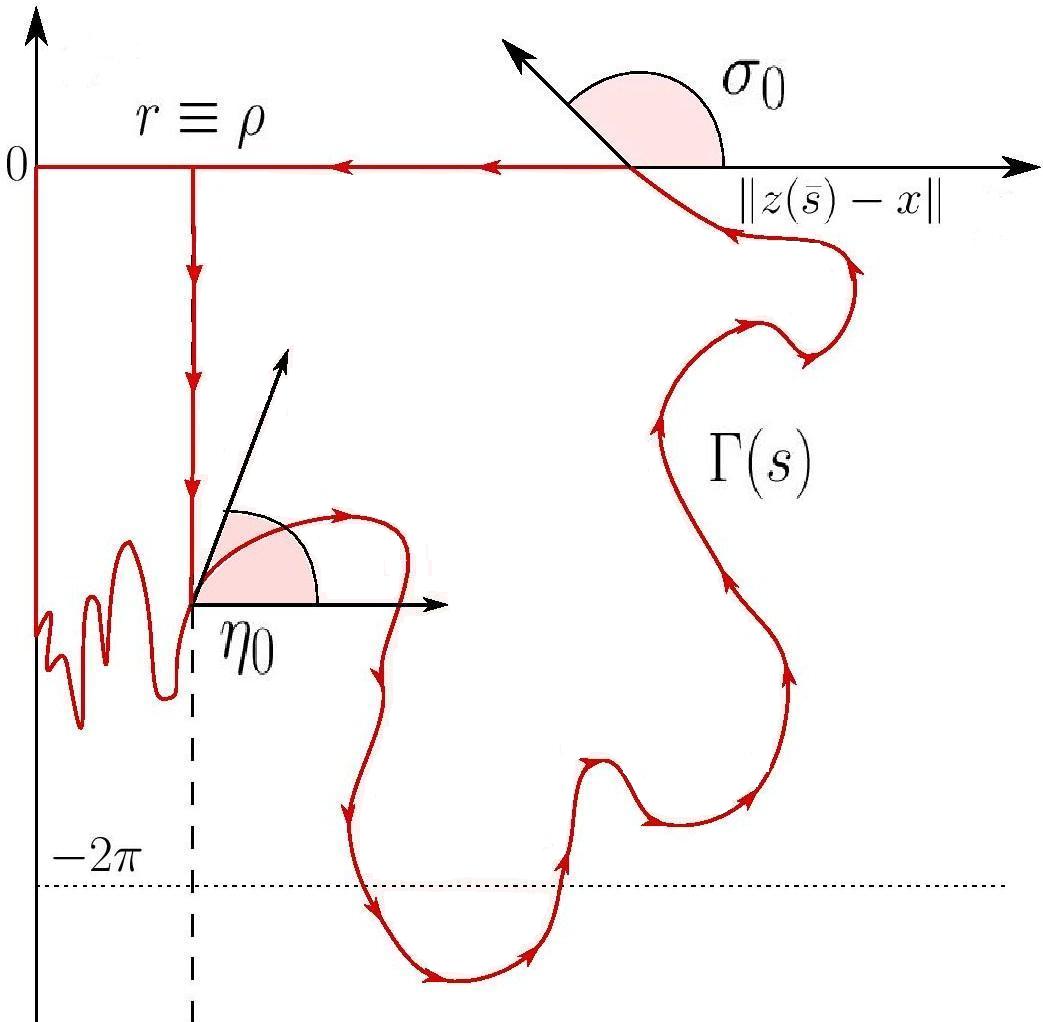}
\caption{The function $\Gamma(s)$ in the general case.}
\label{aggiungicitfig2}
\end{figure}

\begin{claim}
If $\rho$ is small enough, for any $s\in[s_{\rho},\bar{s}]$
\begin{equation}\label{stimaangleesserho}
\abs{\tilde{\prec}(\dot{\Gamma}(s)) - \left( \left( \tilde{V}(s,1)-\tilde{V}(s,0) \right) - \left( \tilde{U}(s,1)-\tilde{U}(s,0) \right) \right)}<\pi.
\end{equation}
\end{claim}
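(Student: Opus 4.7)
The plan is to replicate the argument from the second case on the reduced parameter interval $[s_\rho, \bar{s}]$, where $\Gamma$ is $\mathcal{C}^1$ and $\dot{\Gamma}$ does not vanish. The latter holds because $\gamma = H_1 \circ z$ is a regular curve (being the image of the regular segment $z(s)$ under the diffeomorphism $H_1$) and $P$ restricted to $(\R^+\setminus\{0\})\times\R$ is a local diffeomorphism, so $\dot{\Gamma}(s) = DP(\Gamma(s))^{-1}\dot{\gamma}(s) \neq 0$ on $(0, \bar{s}]$. The assumption $\dot{\Gamma}(0) \neq 0$ used in the second case is replaced here by a one-point check at $s = s_\rho$.

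First I apply Proposition \ref{stimaangoli} with the same choice of $M$, $N$, $f$, $J_1$, $J_2$ as in the second case, but on $I = [s_\rho, \bar{s}]$. The continuous function $\tilde{\prec}(\dot{\Gamma})$ is a continuous determination of the angle between $J_1$ and $J_2$, and Proposition \ref{stimaangoli} yields a continuous determination $\Theta$ of the angle between $Df \circ J_1$ and $Df \circ J_2$ satisfying $\abs{\tilde{\prec}(\dot{\Gamma}(s)) - \Theta(s)} < \pi$ for all $s \in I$. On the other hand, the expression
\[
\Phi(s) := \left(\tilde{V}(s,1)-\tilde{V}(s,0)\right) - \left(\tilde{U}(s,1)-\tilde{U}(s,0)\right)
\]
is also a continuous determination of the same angle function on $I$, so $\Theta - \Phi$ is a constant $2\pi k$ with $k \in \Z$. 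The claim thus reduces to showing $k = 0$.

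For that, I perform a base-point check at $s = s_\rho$. By the choice of lifts recalled in Notation \ref{notazione_di_modifica_definizione_u_v} we have $\tilde{U}(0,t) = \tilde{V}(0,t)$ for every $t$, so $\Phi$ extends continuously to $[0, \bar{s}]$ with $\Phi(0) = 0$. Since $s_\rho \to 0$ as $\rho \to 0$, I can choose $\rho$ small enough so that $\abs{\Phi(s_\rho)} < \pi/2$. Combined with $\abs{\tilde{\prec}(\dot{\Gamma}(s_\rho))} = \abs{\eta_0} \leq \pi/2$, the triangle inequality gives $\abs{\tilde{\prec}(\dot{\Gamma}(s_\rho)) - \Phi(s_\rho)} < \pi$. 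Putting this together with the bound $\abs{\tilde{\prec}(\dot{\Gamma}(s_\rho)) - \Theta(s_\rho)} < \pi$ from Proposition \ref{stimaangoli}, I deduce $\abs{\Theta(s_\rho) - \Phi(s_\rho)} < 2\pi$, forcing $k = 0$. Thus $\Theta \equiv \Phi$ on $[s_\rho, \bar{s}]$, and inequality \eqref{stimaangleesserho} follows from Proposition \ref{stimaangoli}.

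The main obstacle is precisely the base-point check at $s_\rho$: unlike in the second case, one cannot evaluate at $s = 0$ because $\dot{\Gamma}(0)$ may fail to exist or vanish. The argument exploits both the normalization $\eta_0 \in [-\pi/2, \pi/2]$ (a consequence of $s_\rho$ being the \emph{last} preimage of $\rho$ under $\Gamma_1$, which forces $\dot{\Gamma}(s_\rho)$ to be oriented to the right) and the continuity $\Phi(s_\rho) \to 0$ (which relies on the careful matching of the lifts $\tilde{U}$ and $\tilde{V}$ at $s = 0$).
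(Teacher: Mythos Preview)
Your proof is correct and follows essentially the same approach as the paper: both establish the inequality at the single point $s_\rho$ using $|\eta_0|\leq \pi/2$ together with $\Phi(s_\rho)\to 0$ as $\rho\to 0$, and then invoke Proposition~\ref{stimaangoli} to propagate it over $[s_\rho,\bar{s}]$. Your presentation is slightly more explicit about the mechanism (first producing $\Theta$ from Proposition~\ref{stimaangoli} and then identifying $\Theta=\Phi$ via the base-point check), but the content is the same.
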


\textit{Proof of the claim.} Recall that $\lim_{\rho\rightarrow 0^+}s_{\rho}=0$ and functions $\tilde{V},\tilde{U}$ are continuous. Moreover, since $\tilde{V}(0,t)=\tilde{U}(0,t)$ for any $t$, $\left( \tilde{V}(0,1)-\tilde{V}(0,0) \right) - \left( \tilde{U}(0,1)-\tilde{U}(0,0) \right)=0$.\\
\noindent Then, for any $\varepsilon>0$, there exists $\rho>0$ small enough such that
\begin{equation*}
\abs{\left(\tilde{V}(s_{\rho},1)-\tilde{V}(s_{\rho},0) \right) - \left( \tilde{U}(s_{\rho},1)-\tilde{U}(s_{\rho},0) \right)}<\varepsilon.
\end{equation*}

\noindent So, it holds
\begin{equation*}
\abs{\tilde{\prec}(\dot{\Gamma}(s_{\rho})) - \left( \left( \tilde{V}(s_{\rho},1)-\tilde{V}(s_{\rho},0) \right) - \left( \tilde{U}(s_{\rho},1)-\tilde{U}(s_{\rho},0) \right) \right)}<\dfrac{\pi}{2}+\varepsilon.
\end{equation*}
By selecting $\varepsilon>0$ small enough such that $\frac{\pi}{2}+\varepsilon<\pi$, we have
$$
\abs{\tilde{\prec}(\dot{\Gamma}(s_{\rho})) - \left( \left( \tilde{V}(s_{\rho},1)-\tilde{V}(s_{\rho},0) \right) - \left( \tilde{U}(s_{\rho},1)-\tilde{U}(s_{\rho},0) \right) \right)}<\pi.
$$
By applying Proposition \ref{stimaangoli}, inequality \eqref{stimaangleesserho} holds for any $s\in[s_{\rho},\bar{s}]$.\hfill\qed

Denote as $\sigma_0$ the measure contained in $[0,\pi]$ of the angle $\prec(\dot{\Gamma}(\bar{s}))$: again, this is possible because in a neighborhood of $\Gamma(\bar{s})$, the curve $\Gamma$ crosses the first coordinate axis from the bottom up.\\
\noindent Let $\beta_0$ be the measure of the angle $V(\bar{s},1)-V(\bar{s},0)$ contained in $[0,\pi]$.\\
\noindent Since $\sigma_0$ and $\tilde{\prec}(\dot{\Gamma}(\bar{s}))$ are continuous lifts of the angle $\prec(\dot{\Gamma}(\bar{s}))$, we have
$$
\tilde{\prec}(\dot{\Gamma}(\bar{s}))=\sigma_0+2\pi l\qquad l\in\Z,
$$
$$
\tilde{V}(\bar{s},1)-\tilde{V}(\bar{s},0) =\footnote{By definition of $\bar{s}$, $\tilde{U}(\bar{s},1)-\tilde{U}(\bar{s},0)$ is null.}\left( \tilde{V}(\bar{s},1)-\tilde{V}(\bar{s},0) \right)-\left( \tilde{U}(\bar{s},1)-\tilde{U}(\bar{s},0) \right)=
$$
$$
=\beta_0+2\pi j\qquad j\in\Z.
$$
By inequality \eqref{stimaangleesserho} it holds
\begin{equation}
\abs{\tilde{\prec}(\dot{\Gamma}(\bar{s})) - \left( (\tilde{V}(\bar{s},1)-\tilde{V}(\bar{s},0)) - (\tilde{U}(\bar{s},1)-\tilde{U}(\bar{s},0)) \right)}=\abs{\sigma_0+2\pi l -\beta_0 -2\pi j}<\pi.
\end{equation}
By hypothesis \eqref{cidicequalcheipotesi}, $j\leq -1$.\\
\begin{claim}
The quantity $\sigma_0-\beta_0$ is in the open interval $(-\pi,\pi)$.
\end{claim}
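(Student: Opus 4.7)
The plan is to mirror the argument given for the analogous claim in the second case. Since both $\sigma_0$ and $\beta_0$ have been chosen in the closed interval $[0,\pi]$, their difference automatically lies in $[-\pi,\pi]$, so what I need to rule out are only the two extreme values $\pm\pi$. I will argue by contradiction in each case, exploiting the linear structure of the differential of the polar-to-cartesian chart at the axis point $\Gamma(\bar{s})$.

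Suppose first that $\sigma_0 - \beta_0 = \pi$; then necessarily $\sigma_0 = \pi$ and $\beta_0 = 0$. At the point $\Gamma(\bar{s}) = (r(\bar{s}),0)$ the differential $Df(\Gamma(\bar{s}))$ of the map $f(r,\theta)=(r\cos\theta,r\sin\theta)$ is the diagonal matrix with positive diagonal entries $1$ and $r(\bar{s})$, so it sends $(1,0)\in T_{\Gamma(\bar{s})}M$ to $(1,0)\in T_{\gamma(\bar{s})}N$ and carries any negative multiple of $(1,0)$ to a negative multiple of $(1,0)$. The hypothesis $\sigma_0=\pi$ forces $\dot{\Gamma}(\bar{s})$ to be a negative multiple of $(1,0)$; by linearity $\dot{\gamma}(\bar{s}) = Df(\Gamma(\bar{s}))\dot{\Gamma}(\bar{s})$ is likewise a negative multiple of $(1,0)$, so the measure in $[0,\pi]$ of the angle between $(1,0)$ and $\dot{\gamma}(\bar{s})$ equals $\pi$, contradicting $\beta_0=0$.

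The opposite extreme $\sigma_0-\beta_0=-\pi$ would force $\sigma_0=0$ and $\beta_0=\pi$ and is handled by the symmetric argument: $\dot{\Gamma}(\bar{s})$ is now a positive multiple of $(1,0)$, linearity forces $\dot{\gamma}(\bar{s})$ to be a positive multiple of $(1,0)$, hence $\beta_0=0\neq\pi$, again a contradiction. Combining the two impossibilities with the automatic inclusion in $[-\pi,\pi]$ yields $\sigma_0 - \beta_0 \in (-\pi,\pi)$, as claimed. I do not foresee a genuine obstacle: since $\bar{s}>s_\rho>0$ lies in the regime where the curve $\Gamma$ is smooth and the polar chart $f$ is a local diffeomorphism away from the origin, $\dot{\Gamma}(\bar{s})$ and $\dot{\gamma}(\bar{s})$ are related by a bona fide linear isomorphism. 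The only subtle point worth double-checking is that the conventions fixing $\sigma_0, \beta_0\in[0,\pi]$ (which are justified by the upward crossing of the first coordinate axis at $\bar{s}$) interact correctly with the diagonal form of $Df(\Gamma(\bar{s}))$, so that the excluded boundary cases are indeed the ones computed above.
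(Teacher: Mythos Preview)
Your proof is correct and follows essentially the same approach as the paper, which explicitly says ``The argument is the same as Claim~\ref{quantobellstoclaim} in the second case.'' The only difference is cosmetic: where the paper invokes the linearity of $Df(\Gamma(\bar{s}))$ abstractly (collinearity is preserved), you compute the differential concretely as $\mathrm{diag}(1,r(\bar{s}))$ at the axis point $\Gamma(\bar{s})=(r(\bar{s}),0)$, which makes the contradiction in the boundary cases $\sigma_0-\beta_0=\pm\pi$ transparent.
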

The argument is the same as Claim \ref{quantobellstoclaim} in the second case.

\noindent Therefore $l=j$ and so
\begin{equation}\label{sottoilclaim}
l\leq -1.
\end{equation}

\noindent Let us now consider the curve made up of
\begin{itemize}
\item[$(i)$] $\Gamma_{\lvert[s_{\rho},\bar{s}]}$, positively oriented;
\item[$(ii)$] the horizontal segment $\{0\}\times\{r :\ \rho\leq r\leq\norma{z(\bar{s})-x}\}$, followed with decreasing radius;
\item[$(iii)$] the vertical segment $[\Gamma_2(s_{\rho}),0]\times \{ r\equiv\rho \}$, followed downward.
\end{itemize}

\noindent This curve is a simple, closed, piecewise regular, parametrized one thanks to the regularity of the polar coordinates away from the origin and to the absence of self-intersections by the definition of $s_{\rho}$ (see Figure \ref{aggiungicitfig2}).\\
\noindent Apply the Turning Tangent Theorem to this curve. We obtain then
\begin{equation*}
\left( \sigma_0+2\pi l - \eta_0 \right) + (\pi-\sigma_0) + \dfrac{\pi}{2} + (\eta_0+\dfrac{\pi}{2})=2\pi
\end{equation*}

\noindent that is
\begin{equation*}
2\pi(1+l)=2\pi.
\end{equation*}
This implies $l=0$, contradicting inequality \eqref{sottoilclaim}.
\end{itemize}
\hfill\qed

\section{Results over the Torsion and the Linking number for a twist map}\label{torsion twist map}
For the following definition we refer to \cite{lecalvezproprietes} and \cite{crovisier}. In addition, other interesting references are \cite{matherexistence}, \cite{mathervariational} and \cite{matherglancing}.
\begin{definizione}\label{twist map alla crovisier}
A \emph{positive twist map} (resp. \emph{negative}) $f:\mathbb{A}\rightarrow\mathbb{A}$ is a $\mathcal{C}^1$ diffeomorphism isotopic to the identity such that for any lift $F:\R^2\rightarrow\R^2$ and for any $x\in\R$ the function
\begin{equation}\label{crovisier 1 crois}
\R\ni y\mapsto p_1\circ F(x,y)\in\R
\end{equation}
is a strictly increasing (resp. decreasing) diffeomorphism.
\end{definizione}

\begin{remark}
All over the literature (see \cite{lecalvezproprietes} and \cite{crovisier}), the definition of \emph{positive twist map} asks also the further condition that for any lift $F:\R^2\rightarrow\R^2$ and for any $x\in\R$ the function
\begin{equation}\label{crovisier 2 decrois}
\R\ni y\mapsto p_1\circ F^{-1}(x,y)\in\R
\end{equation}
is a decreasing diffeomorphism of $\R$.\\
\noindent Actually, Definition \ref{twist map alla crovisier} implies this condition and we omit it.
\end{remark}
\begin{remark}
In the sequel we work with positive twist maps. The results we are going to present hold, properly adapted, also for negative twist maps.
\end{remark}

\begin{remark}\label{remark isotopia le calvez}
The torsion at any point for a positive twist map $f$ is independent from the choice of the isotopy $(f_t)_t$, thanks to Proposition \ref{indipendenza torsione isotopia}.\\
\noindent In Section 2 of \cite{lecalvezproprietes}, Patrice Le Calvez proved that any positive twist map $f:\mathbb{A}\rightarrow\mathbb{A}$ can be joined to the identity $Id_{\mathbb{A}}$ through an isotopy $(f_t)_{t\in[0,1]}$ in $\text{Diff}^1(\mathbb{A})$ such that $f_0=Id_{\mathbb{A}}$, $f_1=f$ and for any $t\in(0,1]$ each $f_t$ is a positive twist map.
\end{remark}
\begin{notazione}
\noindent In the following, the annulus $\mathbb{A}$ is endowed with the standard Riemannian metric and trivialization. We fix the counterclockwise orientation and consider as reference vector field the constant one $(0,1)$. Caution! We emphasize that we are changing the reference vector field with respect to the previous sections.
\end{notazione}

\subsection{Properties of Torsion for twist maps}
\noindent We now prove Theorem \ref{thm intro 2} presented in the introduction. The proof requires some preliminary steps.
\begin{proposizione}\label{main thm torsion}
Let $f:\mathbb{A}\rightarrow\mathbb{A}$ be a positive twist map. For any $\bar{z}\in\mathbb{A}$ it holds
\begin{equation}
Torsion_1((f_t)_t,\bar{z},(0,1))\in(-\pi,0).
\end{equation}
\end{proposizione}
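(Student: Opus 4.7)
The plan is to reduce to a computation in the universal cover using a particularly well-chosen isotopy, then exploit the strict positivity coming from the twist condition. My first step would be to invoke Proposition \ref{indipendenza torsione isotopia}, which tells us that $Torsion_1((f_t)_t, \bar z, (0,1))$ does not depend on the chosen isotopy joining $Id_{\mathbb{A}}$ to $f$. By Remark \ref{remark isotopia le calvez} I would then replace $(f_t)_t$ by the Le Calvez isotopy, for which each $f_t$, $t \in (0,1]$, is itself a positive twist map. After lifting $\bar z$ to $z \in \R^2$ and $(f_t)_t$ to $(F_t)_t$ with $F_0 = Id_{\R^2}$, the standard trivialization identifies the torsion with the analogous quantity in the plane, so it would suffice to prove that a continuous determination $\tilde v(t)$ of $t \mapsto \theta((0,1), DF_t(z)(0,1))$ normalized by $\tilde v(0) = 0$ satisfies $\tilde v(1) \in (-\pi, 0)$.

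Next I would exploit the key geometric consequence of the twist hypothesis: for every $t \in (0,1]$, the map $y \mapsto p_1 \circ F_t(x,y)$ is a strictly increasing diffeomorphism, so $\partial_y F_t^{1}(z) > 0$, and hence
\[
DF_t(z)(0,1) = \bigl(\partial_y F_t^{1}(z),\, \partial_y F_t^{2}(z)\bigr)
\]
lies in the open right half-plane $\{a > 0\}$. With respect to the counterclockwise orientation and the vertical reference field $(0,1)$, such a vector makes an oriented angle in $\mathbb{T}$ belonging to the arc $p((-\pi, 0))$. In particular, it can never be vertical, i.e.\ its angle in $\mathbb{T}$ is never $0$ nor $\pi$ for $t \in (0,1]$.

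The conclusion would then follow from a continuity argument. Since $\tilde v(0) = 0$ and $\tilde v$ is continuous on $[0,1]$, and since by the previous step $\tilde v(t) \notin \pi\Z$ for all $t \in (0,1]$, the restriction of $\tilde v$ to $(0,1]$ stays in a single connected component of $\R \setminus \pi\Z$. A short explicit sign check at small $t > 0$ (the vector $DF_t(z)(0,1)$ is then close to but slightly to the right of $(0,1)$, giving $\tilde v(t)$ slightly below $0$) identifies this component as $(-\pi, 0)$. Evaluating at $t = 1$ would then yield $Torsion_1((f_t)_t, \bar z, (0,1)) = \tilde v(1) \in (-\pi, 0)$.

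I expect the main subtlety to lie in the continuity step. One must use crucially both the \emph{strict} inequality $\partial_y F_t^{1}(z) > 0$ throughout the \emph{open} interval $(0,1]$ (so that $\tilde v$ cannot cross $\pi\Z$ at positive times) and the boundary value $\tilde v(0) = 0$ (which pins down which of the two components $(-\pi, 0)$ or $(0,\pi)$ adjacent to $0$ the function $\tilde v$ enters). The openness of the twist condition and the starting value at $0$ together prevent $\tilde v$ from drifting outside the strip $(-\pi, 0)$, and this is what produces the strict two-sided bound in the statement.
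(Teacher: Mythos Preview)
Your proposal is correct and follows essentially the same route as the paper: invoke Proposition~\ref{indipendenza torsione isotopia} to pass to the Le Calvez isotopy, lift to $\R^2$, use the twist condition to get $D(p_1\circ F_t)(z)(0,1)>0$ so that $DF_t(z)(0,1)$ stays in the open right half-plane for $t\in(0,1]$, and conclude by continuity that the lifted angle cannot leave $(-\pi,0)$. Your explicit sign-check and connected-component phrasing make the final continuity step slightly more detailed than the paper's version, but the argument is the same.
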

\begin{proof}
From Proposition \ref{indipendenza torsione isotopia}, the torsion does not depend from the choice of the isotopy. Therefore, we use the isotopy given by P.~Le Calvez (see Remark \ref{remark isotopia le calvez}): for any $t\in(0,1]$ the ${\cal{C}}^1$ diffeomorphism $f_t$ is a positive twist map.\\
\noindent Let $(F_t)_t$ be the lifted isotopy of $(f_t)_t$ such that $F_0=Id_{\R^2}$. It joins the identity to $F_1=F$, a lift of $f$. The point $z=(x,y)\in\R^2$ denotes a lift of the point $\bar{z}\in\mathbb{A}$.\\
\noindent Look then at
$$
Torsion_1((F_t)_t,z,(0,1)).
$$
It is the variation of a continuous determination $\tilde{v}((F_t)_t)(z,(0,1),\cdot)$ of the oriented angle function between $(0,1)$ and $DF_t(z)(0,1)$. Recall that it is independent from the choice of the continuous determination of the angle function (see Proposition \ref{proprieta torsione}).\\
\noindent By the choice of the isotopy, for any $t\in(0,1]$, $f_t$ is a positive twist map. Then, since $F_t$ is a lift of $f_t$, for any $x\in\R$ the function
$$
\R\ni y\mapsto p_1\circ F_t(x,y)\in\R
$$
is an increasing diffeomorphism of $\R$. In particular, its derivative is always positive, that is
\begin{equation}
D(p_1\circ F_t)(z)\begin{pmatrix}
0 \\ 1
\end{pmatrix}>0.
\end{equation}
\noindent For any $t\in(0,1]$ the first component of the image vector $DF_t(z)(0,1)$ is positive. The vector remains in the right half-plane and it cannot cross the vertical anymore. Thus, the variation
$$
\tilde{v}((F_t)_t)(z,(0,1),t)-\tilde{v}((F_t)_t)(z,(0,1),0)
$$
has to stay in the interval $(-\pi,0)$ for any $t\in(0,1]$, thanks also to the continuity of the lift. We then conclude that
\begin{equation}
\tilde{v}((F_t)_t)(z,(0,1),1)-\tilde{v}((F_t)_t)(z,(0,1),0) = Torsion_1((F_t)_t,z,(0,1))\in(-\pi,0).
\end{equation}
\end{proof}

\begin{proposizione}\label{prop torsion twist nimportequevecteur}
Let $f:\mathbb{A}\rightarrow\mathbb{A}$ be a positive twist map. Let $\bar{z}\in\mathbb{A}$ and let $\xi\in T_{\bar{z}}\mathbb{A}\setminus\{0\}$. Then it holds
\begin{equation}
Torsion_1((f_t)_t,\bar{z},\xi)\in(-2\pi,\pi).
\end{equation}
\end{proposizione}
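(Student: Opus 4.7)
The plan is to deduce this from Proposition \ref{main thm torsion} combined with the vector-comparison estimate of Lemma \ref{lemma vettore n acceso}. The idea is simply that once we know the torsion at finite time $1$ with respect to the distinguished vertical vector $(0,1)$ lies in $(-\pi,0)$, and we have a universal bound of $\pi$ on how much the torsion at time $1$ can shift when we change the tangent vector, we immediately obtain the claimed enclosure $(-2\pi,\pi)$ by the triangle inequality.

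More precisely, I would first invoke Proposition \ref{main thm torsion} applied at the point $\bar{z}$ with the vertical reference vector, giving
\begin{equation*}
Torsion_1((f_t)_t,\bar{z},(0,1))\in(-\pi,0).
\end{equation*}
Next I would apply Lemma \ref{lemma vettore n acceso} with $n=1$, $x=\bar{z}$, the two tangent vectors being $\xi$ and $(0,1)$, obtaining
\begin{equation*}
\abs{Torsion_1((f_t)_t,\bar{z},\xi)-Torsion_1((f_t)_t,\bar{z},(0,1))}<\pi.
\end{equation*}
Combining these two by the triangle inequality yields
\begin{equation*}
Torsion_1((f_t)_t,\bar{z},\xi)\in\bigl(-\pi-\pi,\,0+\pi\bigr)=(-2\pi,\pi),
\end{equation*}
which is exactly the statement.

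There is no real obstacle here: the result is a direct corollary of Proposition \ref{main thm torsion} and the strict estimate in Lemma \ref{lemma vettore n acceso}. The only thing to watch is that Lemma \ref{lemma vettore n acceso} was stated on a general parallelizable surface $S$ with respect to the fixed reference vector field $X$; in the present section we have switched the reference vector field on $\mathbb{A}$ from $(1,0)$ to $(0,1)$, but the lemma is purely about the difference of two lifts of the angle function at the same point, and so it is insensitive to this choice of $X$. Hence it applies verbatim, and the bound $(-2\pi,\pi)$ is sharp in view of the endpoints $-\pi$ and $0$ that $Torsion_1((f_t)_t,\bar{z},(0,1))$ can approach.
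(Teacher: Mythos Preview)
Your proof is correct. You combine Proposition \ref{main thm torsion} with the strict bound $\abs{Torsion_1((f_t)_t,\bar{z},\xi)-Torsion_1((f_t)_t,\bar{z},(0,1))}<\pi$ from Lemma \ref{lemma vettore n acceso} at $n=1$, and the triangle inequality immediately gives $(-2\pi,\pi)$.

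The paper takes a somewhat different route: rather than invoking Lemma \ref{lemma vettore n acceso}, it uses the lifted angle function $W$ of Proposition \ref{PROP IMPO vV} directly. Assuming (without loss of generality, by passing to $-\xi$) that $\xi$ lies in the right half-plane, there is $s\in[-\pi,0]$ with $\tilde{v}((f_t)_t)(\bar z,\xi,\cdot)=W(s,\cdot)$; monotonicity of $W(\cdot,t)$ sandwiches $W(s,1)$ between $W(-\pi,1)=W(0,1)-\pi$ and $W(0,1)$, and then Proposition \ref{main thm torsion} gives $W(0,1)-W(0,0)\in(-\pi,0)$, whence the same interval $(-2\pi,\pi)$. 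In substance the two arguments encode the same fact---changing the tangent vector can shift the time-$1$ torsion by at most $\pi$---but the paper re-derives that bound via the $W$-machinery, which it reuses in the inductive proof of Theorem \ref{thm intro 2}. Your argument is shorter and perfectly legitimate, since Lemma \ref{lemma vettore n acceso} is already available; the paper's approach has the mild advantage of exercising the tool it needs again a few lines later.
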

\begin{proof}
We use the notations of Proposition \ref{PROP IMPO vV}. Then $W(0,\cdot)$ and $W(-\pi,\cdot)$ are continuous determinations of $v((f_t)_t)(\bar{z},(0,1),\cdot)$ and $v((f_t)_t)(\bar{z},(0,-1),\cdot)$ respectively, such that $W(0,0)=0$ and $W(-\pi,0)=-\pi$.\\
\noindent We assume that $\xi$ is in the right half-plane. Then $v((f_t)_t)(\bar{z},\xi,0)$ admits a measure $\alpha\in[-\pi,0]=[W(-\pi,0),W(0,0)]$. Let us denote such a measure $\tilde{v}((f_t)_t)(\bar{z},\xi,0)$. There exists $s\in[-\pi,0]$ such that $\tilde{v}((f_t)_t)(\bar{z},\xi,\cdot)=W(s,\cdot)$ and so by point $(ii)$ of Proposition \ref{PROP IMPO vV}
$$
W(-\pi,1)\leq\tilde{v}((f_t)_t)(\bar{z},\xi,1)=W(s,1)\leq W(0,1).
$$
This implies
$$
W(-\pi,1)-W(0,0)\leq\tilde{v}((f_t)_t)(\bar{z},\xi,1)-\tilde{v}((f_t)_t)(\bar{z},\xi,0)\leq W(0,1)-W(-\pi,0).
$$
Because of $(iii)$ of Proposition \eqref{PROP IMPO vV}, we have that $W(-\pi,1)=W(0,1)-\pi$ and $W(-\pi,0)=W(0,0)-\pi$. From these equalitites and since $Torsion_1((f_t)_t,\bar{z},\xi)=\tilde{v}((f_t)_t)(\bar{z},\xi,1)-\bar{v}((ft)_t)(\bar{z},\xi,0)$, we obtain
$$
W(0,1)-W(0,0)-\pi\leq Torsion_1((f_t)_t,\bar{z},\xi)\leq W(0,1)-W(0,0)+\pi.
$$
By Proposition \eqref{main thm torsion}, $W(0,1)-W(0,0)$ is in $(-\pi,0)$, hence
$$
-2\pi<Torsion_1((f_t)_t,\bar{z},\xi)<\pi.
$$
If $\xi$ is in the left half-plane, then $-\xi$ is in the right half-plane and we know that
$$
Torsion_1((f_t)_t,\bar{z},\xi)=Torsion_1((f_t)_t,\bar{z},-\xi)\in(-2\pi,\pi).
$$
\end{proof}


\noindent\textit{Proof of Theorem \ref{thm intro 2}.} The proof of the Theorem is made by induction. The base case, that is the case with $n=1$, is Proposition \ref{main thm torsion}. Concerning the inductive step, assume that the statement holds true for $n\in\N$.\\
\noindent We use the notation of Proposition \ref{PROP IMPO vV}, but we add the dependence on the point $W_{\bar{z}}(s,t)$.\\
\noindent Then $W_{\bar{z}}(0,\cdot)$ is a continuous determination of the angle function $v((f_t)_t)(\bar{z},(0,1),\cdot)$ that satisfies $W_{\bar{z}}(0,1)=\beta\in(-\pi,0)$, that is
$$
W_{f(\bar{z})}(-\pi,0)=-\pi<W_{\bar{z}}(0,1)=W_{f(\bar{z})}(\beta,0)<W_{f(\bar{z})}(0,0)=0.
$$
By Proposition \ref{PROP IMPO vV} we have for any $t$
$$
W_{f(\bar{z})}(-\pi,t)<W_{f(\bar{z})}(\beta,t)=W_{\bar{z}}(0,1+t)<W_{f(\bar{z})}(0,t).
$$
Using $(iii)$ of Proposition \ref{PROP IMPO vV} it holds
$$
W_{f(\bar{z})}(0,t)-\pi<W_{\bar{z}}(0,1+t)<W_{f(\bar{z})}(0,t).
$$
For $t=n$ we have
$$
W_{f(\bar{z})}(0,n)-\pi-W_{f(\bar{z})}(0,0)<W_{\bar{z}}(0,n+1)-W_{\bar{z}}(0,0)<W_{f(\bar{z})}(0,n)-W_{f(\bar{z})}(0,0).
$$
By induction hypothesis, we have
$$
W_{f(\bar{z})}(0,n)-W_{f(\bar{z})}(0,0)\in(-n\pi,0)
$$
and then
$$
W_{\bar{z}}(0,n+1)-W_{\bar{z}}(0,0)\in(-(n+1)\pi,0).
$$
\hfill$\Box$

\noindent Theorem \ref{thm intro 2} implies the following
\begin{corollario}
Let $f:\mathbb{A}\rightarrow\mathbb{A}$ be a positive twist map. Let $\bar{z}\in\mathbb{A}$ be a point at which the torsion exists.\\
\noindent Then
\begin{equation}
Torsion((f_t)_t,\bar{z})\in[-\pi,0]
\end{equation}
where $((f_t)_t)$ is an isotopy joining the identity with $f$.
\end{corollario}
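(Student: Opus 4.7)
The plan is to deduce this corollary directly from Theorem \ref{thm intro 2} together with Proposition \ref{torsione cambia vettore stima}, which states that whenever the torsion limit exists it is independent of the choice of tangent vector in $T_{\bar z}\mathbb{A}\setminus\{0\}$. In particular, since we are given that $Torsion((f_t)_t,\bar z)$ exists, we may evaluate it using the canonical vector $(0,1)$:
\[
Torsion((f_t)_t,\bar z)=\lim_{n\rightarrow+\infty}Torsion_n((f_t)_t,\bar z,(0,1)).
\]

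Now I would invoke Theorem \ref{thm intro 2} applied to the point $\bar z$ and the vertical vector $(0,1)$: for every integer $n\geq 1$,
\[
Torsion_n((f_t)_t,\bar z,(0,1))\in(-\pi,0).
\]
Since each term of the sequence lies in the open interval $(-\pi,0)$, standard closure of limits under non-strict inequalities gives that the limit belongs to the closed interval $[-\pi,0]$.

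There is essentially no obstacle here: all the difficulty has been absorbed into Theorem \ref{thm intro 2} (which was proved by the non-trivial induction on $n$ using the twist property and P.~Le~Calvez's isotopy) and into Proposition \ref{torsione cambia vettore stima} (which allowed us to freely pick $(0,1)$ as the reference vector). The only subtlety worth recording is that the endpoints $-\pi$ and $0$ cannot be excluded from the limit even though they are excluded at every finite time; indeed, the examples mentioned at the end of the introduction show that these extremal values can be attained asymptotically.
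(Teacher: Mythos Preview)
Your proof is correct and is precisely the argument the paper has in mind: the paper does not spell out a separate proof but simply introduces the corollary with the phrase ``Theorem \ref{thm intro 2} implies the following,'' and your deduction---choosing the vector $(0,1)$ via Proposition \ref{torsione cambia vettore stima}, applying the finite-time bound $Torsion_n((f_t)_t,\bar z,(0,1))\in(-\pi,0)$ from Theorem \ref{thm intro 2}, and passing to the limit---is exactly how that implication goes.
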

\begin{remark}
The independence of the torsion from the chosen isotopy is assured by Proposition \ref{indipendenza torsione isotopia}.
\end{remark}

\begin{esempio}
Let $f:\mathbb{A}\rightarrow\mathbb{A}$ be a positive twist map. Any point of an Aubry-Mather set has zero torsion. This result has been proved by S.~Crovisier in \cite{crovisier} (see Theorem 1.2).
\end{esempio}

\begin{esempio}
Let $f:\mathbb{A}\rightarrow\mathbb{A}$ be a positive twist map. If $z\in\mathbb{A}$ is a hyperbolic fixed point such that $Df(z)$ has a negative real eigenvalue, then we have $Torsion(f,z)=-\pi$.\\
\noindent To find an example of such a dynamics, consider the fixed point $(0,0)\in\mathbb{A}$ of the standard map $(x,y)\mapsto f_{\lambda}(x,y)=\left(x+y-\frac{\lambda}{2\pi}\sin(2\pi x),y-\frac{\lambda}{2\pi}\sin(2\pi x)\right)$ for $\lambda\geq 4$.
\end{esempio}

\subsection{Properties of Linking number for twist maps}
\noindent The result presented in Corollary \ref{thm intro 3} provides an estimation of the linking number of the orbit of two points. It is an outcome of Theorems \ref{thm intro 1} and \ref{thm intro 2}.\\
\begin{notazione}
On $\R^2$ we fix the counterclockwise orientation and we consider as reference vector field the constant one $X(z)=(0,1)$ for any $z\in\R^2$.
\end{notazione}

\noindent \textit{Proof of Corollary \ref{thm intro 3}.} Let $f:\mathbb{A}\rightarrow\mathbb{A}$ be a positive twist map and let $(f_t)_t$ be an isotopy in $\text{Diff}^1(\mathbb{A})$ joining the identity to $f_1=f$. Let $(F_t)_t$ be the lift in $\text{Diff}^1(\R^2)$ of $(f_t)_t$ such that $F_0=Id_{\R^2}$. So it joins the identity to $F_1=F$, which is a lift of $f$.\\
\noindent Let $z_1,z_2\in\R^2$, $z_1\neq z_2$ and assume that the limit
$$
Linking((F_t)_t,z_1,z_2) = \lim_{n\rightarrow+\infty}Linking_n((F_t)_t,z_1,z_2)
$$
exists.\\
\noindent For any $n\in\N$ denote $l_n$ as the quantity $Linking_n((F_t)_t,z_1,z_1)$. Fix now $n\in\N$. From Corollary \ref{corollario_nacceso_link}, there exists a point $z$ lying on the segment joining $z_1$ and $z_2$, such that
$$
Torsion_n\left((F_t)_t,z,z_2-z_1\right) = l_n.
$$
\noindent Thanks to the trivialization, we have
$$
Torsion_n\left((F_t)_t,z,z_2-z_1\right) = Torsion_n\left((f_t)_t,\bar{z},z_2-z_1\right)
$$
where $\bar{z}\in\mathbb{A}$ is the projection on the annulus of the point $z\in\R^2$. Therefore Theorem \ref{thm intro 2} tells us that
\begin{equation}\label{torsione nella dim di ln}
Torsion_n((F_t)_t,z,(0,1))\in(-\pi,0).
\end{equation}
By Lemma \ref{lemma vettore n acceso}, it holds
$$
\abs{Torsion_n\left((F_t)_t,z,z_2-z_1\right)-Torsion_n((F_t)_t,z,(0,1))}<\dfrac{\pi}{n}
$$
and then, by \eqref{torsione nella dim di ln},
$$
l_n=Torsion_n\left((F_t)_t,z,z_2-z_1\right)\in\left( -\pi-\dfrac{\pi}{n},\dfrac{\pi}{n} \right).
$$
\noindent We deduce that
$$
l_n = Linking_n((F_t)_t,z_1,z_2)\in\left(-\pi-\dfrac{\pi}{n},\dfrac{\pi}{n}\right).
$$
\noindent Since this holds for any fixed $n\in\N, n\neq 0$, passing to the limit, we conclude that
$$
Linking((F_t)_t,z_1,z_2)\in[-\pi,0].
$$
\hfill$\Box$

\indent We also give an estimation of finite-time linking number under some further assumptions.
\begin{proposizione}\label{proprieta linking n}
Let $F:\R^2\rightarrow\R^2$ be a lift of a positive twist map and let $p_1:\R^2\rightarrow\R$ be the projection over the first coordinate. Let $z_1,z_2\in\R^2,z_1\neq z_2$ be such that $p_1(z_1)=p_1(z_2)$. Then for any $n\in\N$
\begin{equation}
Linking_n((F_t)_t,z_1,z_2)\in(-\pi,0).
\end{equation}
\end{proposizione}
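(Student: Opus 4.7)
The plan is to obtain the inequality as an immediate consequence of Corollary \ref{corollario_nacceso_link} combined with Theorem \ref{thm intro 2}. The hypothesis $p_1(z_1)=p_1(z_2)$ is the key: it forces $z_2-z_1=(0,c)$ with $c\neq 0$, so the vector $z_2-z_1$ is colinear with the reference vector field $(0,1)$ (positively if $c>0$, negatively if $c<0$). This is precisely the setting in which Theorem \ref{thm intro 2} delivers the estimate $(-\pi,0)$.

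First I would apply Corollary \ref{corollario_nacceso_link} to the points $z_1,z_2$ (note that the finite-time linking number $l_n:=Linking_n((F_t)_t,z_1,z_2)$ is always well defined): this furnishes a point $z\in[z_1,z_2]\subset\R^2$ such that
\[
Torsion_n\bigl((F_t)_t,z,z_2-z_1\bigr)=Linking_n((F_t)_t,z_1,z_2).
\]
Next I would observe that finite-time torsion is \emph{exactly} invariant under replacing the tangent vector by a colinear one: if $\xi,\delta\in T_z\R^2\setminus\{0\}$ are positively colinear, then $Df_t(z)\xi$ and $Df_t(z)\delta$ point in the same direction for every $t$, so the angle functions $v((F_t)_t)(z,\xi,\cdot)$ and $v((F_t)_t)(z,\delta,\cdot)$ coincide in $\mathbb{T}$ and share a continuous determination; if they are negatively colinear, the two angle functions differ by the constant $\pi$ and any continuous determination of one differs from a continuous determination of the other by the same constant $\pi$. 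In either case the variation between times $0$ and $n$ is identical, so
\[
Torsion_n\bigl((F_t)_t,z,z_2-z_1\bigr)=Torsion_n\bigl((F_t)_t,z,(0,1)\bigr).
\]

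Finally, passing to the annulus via the trivialization (exactly as in the proof of Corollary \ref{thm intro 3}) and denoting by $\bar{z}=P(z)\in\mathbb{A}$ the projection, we have
\[
Torsion_n\bigl((F_t)_t,z,(0,1)\bigr)=Torsion_n\bigl((f_t)_t,\bar{z},(0,1)\bigr),
\]
and Theorem \ref{thm intro 2} applied to $\bar{z}\in\mathbb{A}$ yields $Torsion_n((f_t)_t,\bar{z},(0,1))\in(-\pi,0)$. Chaining the equalities gives $Linking_n((F_t)_t,z_1,z_2)\in(-\pi,0)$, which is the claim. There is no real obstacle here; the proposition is essentially a corollary of the two main results, the only mild subtlety being the exact (not just up to $\pi/n$) invariance of torsion under colinear change of vector, which has to be pointed out since Lemma \ref{lemma vettore n acceso} only gives the weaker bound $\pi/n$ in general.
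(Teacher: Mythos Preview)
Your proof is correct and follows essentially the same approach as the paper: apply Corollary~\ref{corollario_nacceso_link} to produce $z\in[z_1,z_2]$ with $Torsion_n((F_t)_t,z,z_2-z_1)=Linking_n((F_t)_t,z_1,z_2)$, then use that $z_2-z_1$ is vertical together with Theorem~\ref{thm intro 2}. The paper phrases it as a proof by contradiction and simply writes $Torsion_n((F_t)_t,z,z_2-z_1)=Torsion_n((F_t)_t,z,(0,1))$ without comment; your explicit justification of the colinear-invariance step is a welcome clarification.
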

\begin{proof}
Arguing by contradiction, assume that there exist points $z_1,z_2\in\R^2,z_1\neq z_2$ with $p_1(z_1)=p_1(z_2)$ and $n\in\N, n\neq 0$ such that
\begin{equation}
Linking_n((F_t)_t,z_1,z_2)=l
\end{equation}
with either $l$ smaller or equal $-\pi$ or $l$ greater or equal $0$. From the condition over the first coordinate projection, the vector $z_2-z_1$ joining the two points is vertical. By Corollary \ref{corollario_nacceso_link}, there exists a point $z\in\R^2$ lying on the segment joining $z_2$ and $z_1$ such that
\begin{equation}
Torsion_n\left((F_t)_t,z,z_2-z_1\right)=Torsion_n((F_t)_t,z,(0,1))=l.
\end{equation}
The value $l$ does not belong to the interval $(-\pi,0)$. This contradicts Theorem \ref{thm intro 2} and we conclude.
\end{proof}

\noindent Remark that if two points $z_1,z_2$ do not have the same first coordinate projection, then the result of Proposition \ref{proprieta linking n} does not hold, as shown by the following two examples. Moreover, Examples \ref{exempio link 1} and \ref{exempio link 2} show us that the extremal values $0$ and $-\pi$ of the admissible interval for the linking number in Corollary \ref{thm intro 3} can be actually attained.
\begin{esempio}\label{exempio link 1}
Consider a lift of a $\mathcal{C}^1$ diffeomorphism on $\mathbb{A}$ (not only lifts of twist maps): the linking number of any two points $z_1,z_2=z_1+(2\pi,0)$ is null.
\end{esempio}

\begin{esempio}\label{exempio link 2}
Let $F:\R^2\rightarrow\R^2$ be a lift of a positive twist map on $\mathbb{A}$. Assume that $z_0$ is a hyperbolic fixed point such that $DF(z_0)$ has a negative real eigenvalue of modulus strictly smaller than $1$. Let $z_1$ be a point lying on one of the stable branches of $z_0$. Then $Linking(F,z_0,z_1)=-\pi$.
\end{esempio}

\subsection{Crovisier's Torsion for twist maps: definition and comparison}\label{crovisier}
\indent In \cite{crovisier} S.~Crovisier gives another definition of torsion for a positive twist map. It seems natural then comparing the two definitions: we prove that the two definitions are equivalent and so we deduce that Crovisier's results hold also refering to our Definition \ref{definizione_torsion}.\\
\noindent As before, we fix the counterclockwise orientation and we consider as  reference vector field on $\mathbb{A}$ the constant one $(0,1)$.
\begin{definizione}[Crovisier's definition in \cite{crovisier}]
Let $f:\mathbb{A}\rightarrow\mathbb{A}$ be a positive twist map. Let $\bar{z}\in\mathbb{A}$ and let $\xi\in T_{\bar{z}}\mathbb{A}\setminus\{0\}$. Denote as $z\in\R^2$ a lift of the point $\bar{z}$. We define the function
$$
\theta^0:T_{\bar{z}}\mathbb{A}\setminus\{0\}\rightarrow(-2\pi,0]\subset\R
$$
$$
\xi\mapsto\theta^0(\xi)
$$
as the measure of the oriented angle between the vertical vector $(0,1)$ and $\xi$ contained in the interval $(-2\pi,0]$. The quantity $\theta^0(Df(\bar{z})(0,1))$ is then the measure of the oriented angle between $(0,1)$ and $Df(\bar{z})(0,1)$ contained in the interval $(-2\pi,0]$.\\
\noindent We define the function
$$
\theta^1:T_{\bar{z}}\mathbb{A}\setminus\{0\}\rightarrow(\theta^0(Df(\bar{z})(0,1))-2\pi,\theta^0(Df(\bar{z})(0,1))]
$$
$$
\xi\mapsto\theta^1(\xi)
$$
as the measure of the oriented angle between $(0,1)$ and $Df(\bar{z})\xi$ contained in the real interval $(\theta^0(Df(\bar{z})(0,1))-2\pi,\theta^0(Df(\bar{z})(0,1))]$. We define the following function 
$$
\theta:T_{\bar{z}}\mathbb{A}\setminus\{0\}\rightarrow\R
$$
$$
\xi\mapsto\theta(\xi) := \theta^1(\xi)-\theta^0(\xi)
$$
which is a measure of the oriented angle between $\xi$ and $Df(\bar{z})\xi$.\\
\noindent For a given $n\in\Z$ define
\begin{equation}\label{torsione discreta crovisier}
\theta_n(\xi) := \begin{cases}
\sum_{0\leq k\leq n-1}\theta (Df^k(\bar{z})\xi)\qquad n\geq 0 \\
-\theta_{-n}(Df^n(\bar{z})\xi)\qquad n<0.
\end{cases}
\end{equation}
\end{definizione}

\noindent Observe that for $k\in\N$, the quantity $\theta(Df^k(\bar{z})\xi)$ is the difference between $\theta^1(Df^k(\bar{z})\xi)$ and $\theta^0(Df^k(\bar{z})\xi)$, where $\theta^0(Df^k(\bar{z})\xi)$ is the measure, contained in $(-2\pi,0]$, of the oriented angle between the vectors $(0,1)$ and $Df^k(\bar{z})\xi$. These vectors lie in the tangent space $T_{f^k(\bar{z})}\mathbb{A}$. On the other hand, $\theta^1(Df^k(\bar{z})\xi)$ is the measure, contained in the interval $(\theta^0(Df(f^k(\bar{z}))(0,1))-2\pi,\theta^0(Df(f^k(\bar{z}))(0,1))]$, of the oriented angle between $(0,1)$ and $Df^{k+1}(\bar{z})\xi$. These vectors lie in the tangent space $T_{f^{k+1}(\bar{z})}\mathbb{A}$.

\begin{definizione}
An invariant set $\bar{X}\subset\mathbb{A}$ with respect to a positive twist map $f:\mathbb{A}\rightarrow\mathbb{A}$ and its lift $X\subset\R^2$ have null torsion if for any $\bar{z}\in\bar{X}$ and any $\xi\in T_{\bar{z}}\mathbb{A}\setminus\{0\}=T_z\R^2\setminus\{0\}$ it holds
$$
\lim_{n\rightarrow\pm\infty}\dfrac{\theta_n(\xi)}{n}=0.
$$
\end{definizione}

For a given $n\in\N$, a fixed point $\bar{z}\in\mathbb{A}$ and a fixed vector $\xi\in T_{\bar{z}}\mathbb{A}\setminus\{0\}$ the quantities $\theta_n(\xi)$ and $nTorsion_n\left((f_t)_t,\bar{z},\xi\right)$, as defined in \eqref{intro torsion tempi discreti}, coincide. Hence, Crovisier's and B\'eguin's definitions of torsion are the same.

\begin{proposizione}
Let $\bar{z}\in\mathbb{A}$ and $\xi\in T_{\bar{z}}\mathbb{A}\setminus\{0\}$. Let $f:\mathbb{A}\rightarrow\mathbb{A}$ be a positive twist map and let $(f_t)_t$ be an isotopy in $\text{Diff}^1(\mathbb{A})$ joining the identity to $f_1=f$. Then (see Definitions \eqref{torsione discreta crovisier} and \eqref{intro torsion tempi discreti})
\begin{equation}
nTorsion_n\left((f_t)_t,\bar{z},\xi\right) = \theta_n(\xi).
\end{equation}
\end{proposizione}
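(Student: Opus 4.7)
The plan is to reduce the equality to a single-step statement and then invoke Proposition \ref{main thm torsion} (the base case of Theorem \ref{thm intro 2}) together with Proposition \ref{PROP IMPO vV} to pin down the continuous determination of the angle in the precise interval used by Crovisier.

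\smallskip
\noindent\emph{Step 1 (telescoping).} For the extended isotopy one has $f_t=f_{\{t\}}\circ f^{\lfloor t\rfloor}$, hence for $t\in[k,k+1]$ and $\xi\in T_{\bar z}\mathbb{A}\setminus\{0\}$,
\[
Df_t(\bar z)\xi \;=\; Df_{t-k}\bigl(f^k(\bar z)\bigr)\,\bigl(Df^k(\bar z)\xi\bigr).
\]
Therefore, if $\tilde v(\cdot):=\tilde v((f_t)_t)(\bar z,\xi,\cdot)$ is any continuous determination of the angle function on $[0,n]$, the restriction $\tilde v(k+\cdot)|_{[0,1]}$ is a continuous determination of $v((f_t)_t)(f^k(\bar z),Df^k(\bar z)\xi,\cdot)$. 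Since $Torsion_1$ does not depend on the chosen lift (Proposition \ref{proprieta torsione}), we get the telescoping identity
\[
n\,Torsion_n((f_t)_t,\bar z,\xi) \;=\; \sum_{k=0}^{n-1}\bigl[\tilde v(k+1)-\tilde v(k)\bigr] \;=\; \sum_{k=0}^{n-1} Torsion_1\!\left((f_t)_t,f^k(\bar z),Df^k(\bar z)\xi\right).
\]
It remains to show that for every $\bar w\in\mathbb{A}$ and every $\eta\in T_{\bar w}\mathbb{A}\setminus\{0\}$,
\[
Torsion_1((f_t)_t,\bar w,\eta)\;=\;\theta(\eta)\;=\;\theta^1(\eta)-\theta^0(\eta).
\]

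\smallskip
\noindent\emph{Step 2 (identifying the single-step variation with Crovisier's $\theta$).} Fix $\bar w\in\mathbb{A}$ and consider the function $W=W_{\bar w}:\R\times\R\to\R$ from Proposition \ref{PROP IMPO vV}, using the reference field $X=(0,1)$ so that $JX=(-1,0)$. Choose the continuous determination $\tilde v((f_t)_t)(\bar w,\eta,\cdot)$ normalized by $\tilde v(0)=\theta^0(\eta)\in(-2\pi,0]$; this is admissible because $\theta^0(\eta)$ is a measure of the angle between $X(\bar w)$ and $\eta$. By uniqueness of the lift we then have $\tilde v((f_t)_t)(\bar w,\eta,\cdot)=W(\theta^0(\eta),\cdot)$, so
\[
Torsion_1((f_t)_t,\bar w,\eta)\;=\;W(\theta^0(\eta),1)-\theta^0(\eta).
\]
Now I apply Proposition \ref{main thm torsion} to $\bar w$, which gives $W(0,1)\in(-\pi,0)$; hence $W(0,1)=\theta^0(Df(\bar w)(0,1))$. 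Using property $(iii)$ of Proposition \ref{PROP IMPO vV} (commutation with the $\pi$-translation) iterated twice, $W(-2\pi,1)=W(0,1)-2\pi$, and by strict monotonicity in the first variable (property $(ii)$), for any $s\in(-2\pi,0]$
\[
W(s,1)\;\in\;\bigl(W(0,1)-2\pi,\,W(0,1)\bigr].
\]
In particular, $W(\theta^0(\eta),1)$ lies in the half-open interval $\bigl(\theta^0(Df(\bar w)(0,1))-2\pi,\,\theta^0(Df(\bar w)(0,1))\bigr]$, which is exactly the interval in which $\theta^1(\eta)$ is defined to live. Since both $W(\theta^0(\eta),1)$ and $\theta^1(\eta)$ are measures of the same angle $\theta(X(\bar w),Df(\bar w)\eta)$ inside the same length-$2\pi$ interval, they coincide. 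Therefore $Torsion_1((f_t)_t,\bar w,\eta)=\theta^1(\eta)-\theta^0(\eta)=\theta(\eta)$, and summing over $k=0,\dots,n-1$ concludes via Step 1.

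\smallskip
\noindent The one subtle point, which I regard as the real content of the argument, is Step 2: without the twist hypothesis (entering through Proposition \ref{main thm torsion}) the continuous determination of $\tilde v$ at time $1$ need not fall in the specific interval that Crovisier uses to define $\theta^1$, so the equality of measures could fail by a multiple of $2\pi$. The fact that positivity of the twist forces $W(0,1)\in(-\pi,0)$, together with the $\pi$-translation symmetry of $W(\cdot,t)$, is precisely what synchronizes the two definitions.
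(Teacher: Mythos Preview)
Your proof is correct and follows essentially the same approach as the paper: both reduce to the single-step identity $Torsion_1((f_t)_t,\bar w,\eta)=\theta^1(\eta)-\theta^0(\eta)$ via telescoping, choose the lift so that $\tilde v(0)=\theta^0(\eta)$, use Proposition~\ref{main thm torsion} to identify $W(0,1)=\theta^0(Df(\bar w)(0,1))$, and then invoke the monotonicity and $\pi$-commutation of Proposition~\ref{PROP IMPO vV} to force $W(\theta^0(\eta),1)$ into the interval defining $\theta^1(\eta)$. Your write-up is slightly more explicit about the telescoping step and about iterating property~$(iii)$ to obtain the $2\pi$-shift, but the argument is the same.
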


\begin{proof}
Crovisier's quantity $\theta_n(\xi)$, as defined in \eqref{torsione discreta crovisier}, is then
$$
\theta_n(\xi) = \sum_{0\leq k\leq n-1}\theta(Df^k(z)\xi) = \sum_{0\leq k\leq n-1}\left( \theta^1(Df^k(z)\xi) - \theta^0(Df^k(z)\xi) \right).
$$
\noindent On the other hand, B\'eguin's quantity, as presented in \eqref{intro torsion tempi discreti}, is
$$
n Torsion_n\left((f_t)_t,z,\xi\right) = \sum_{0\leq k\leq n-1} Torsion_1\left((f_t)_t,f^k(z),Df^k(z)\xi\right)=
$$
$$
= \sum_{0\leq k\leq n-1}\left( \tilde{v}\left((f_t)_t\right)\left(f^k(z),Df^k(z)\xi,1\right) - \tilde{v}\left((f_t)_t\right)\left(f^k(z),Df^k(z)\xi,0\right) \right).
$$

We prove that for any $0\leq k\leq n-1$
$$
\theta^1(Df^k(z)\xi)-\theta^0(Df^k(z)\xi) = \tilde{v}\left((f_t)_t\right)\left(f^k(z),Df^k(z)\xi,1\right)-\tilde{v}\left((f_t)_t\right)\left(Df^k(z),Df^k(z)\xi,0\right)
$$
and this concludes the proof.\\
\noindent Show it for $k=0$, being the proof of the equality of the other terms the same.\\
\noindent The oriented angles involved are the same. Indeed, $\theta^0(\xi)$ is a measure of the oriented angle between $(0,1)$ and $\xi$, that is the angle $v((f_t)_t)(z,\xi,0)$; $\theta^1(\xi)$ is a measure of the oriented angle between $(0,1)$ and $Df(z)\xi$, that is the angle $v((f_t)_t)(z,\xi,1)$.\\
\noindent The quantity $\tilde{v}((f_t)_t)(z,\xi,1)-\tilde{v}((f_t)_t)(z,\xi,0)$ does not depend on the chosen lift. We show that, by choosing the lift so that $\tilde{v}((f_t)_t)(z,\xi,0)=\theta^0(\xi)$, it holds $\tilde{v}((f_t)_t)(z,\xi,1)=\theta^1(\xi)$. This implies the required equality.\\
\noindent We refer to the notation of Proposition \ref{PROP IMPO vV}. We choose the lift so that $\tilde{v}((f_t)_t)(z,\xi,0)=\theta^0(\xi)\in(-2\pi,0]$. There exists $s\in(-2\pi,0]$ such that $W_z(s,\cdot)=\tilde{v}((f_t)_t)(z,\xi,\cdot)$.\\
\noindent Observe that $W_z(-2\pi,0)<W_z(s,0)\leq W_z(0,0)$. By point $(ii)$ of Proposition \ref{PROP IMPO vV} it holds
$$
W_z(-2\pi,1)<W_z(s,1)=\tilde{v}((f_t)_t)(z,\xi,1)\leq W_z(0,1).
$$
Point $(iii)$ of Proposition \ref{PROP IMPO vV} tells us that $W_z(-2\pi,1)=W_z(0,1)-2\pi$, so we have
$$
W_z(0,1)-2\pi<\tilde{v}((f_t)_t)(z,\xi,1)\leq W_z(0,1).
$$
By Theorem \ref{thm intro 2} for $n=1$ it holds $W_z(0,1)\in(-\pi,0)\subset(-2\pi,0]$. Being lifts of the same angle both in $(-2\pi,0]$, we have $W_z(0,1)=\theta^0(Df(z)(0,1))$.\\
\noindent We then conclude that $W_z(s,1)=\tilde{v}((f_t)_t)(z,\xi,1)\in(-2\pi+\theta^0(Df(z)(0,1)),\theta^0(Df(z)(0,1))]$ and so $\tilde{v}((f_t)_t)(z,\xi,1)=\theta^1(\xi)$ being lifts of the same angle both contained in the interval\\ \noindent$(-2\pi+\theta^0(Df(z)(0,1)),\theta^0(Df(z)(0,1))]$.
\end{proof}

\indent We recall the result obtained by S.~Crovisier in \cite{crovisier}. Since the two definitions of torsion are equivalent, this result holds true also refering to the torsion presented in Definition \ref{definizione_torsion}. For the definition of \emph{well-ordered sets} we refer to \cite{chenciner} and \cite{crovisier}.
\begin{definizione}[Well-ordered set]
A set $\bar{E}\subset\mathbb{A}$, not empty and invariant for $f$, and its lift $E\subset\R^2$ are said \emph{well-ordered} if
\begin{itemize}
\item[$(i)$] $\bar{p}_1:\bar{E}\rightarrow\mathbb{T}$ is injective;
\item[$(ii)$] for any $z,z'\in E$, lifts of points $\bar{z},\bar{z}'\in\bar{E}$, such that $p_1(z)<p_1(z')$, it holds that $p_1(F(z))<p_1(F(z'))$.
\end{itemize}
\end{definizione}
A rotation number is associated to any well-ordered set (see \cite{lecalvezproprietes}).\\
\begin{teorema}[Theorem 1.2 in \cite{crovisier}]
Let $f:\mathbb{A}\rightarrow\mathbb{A}$ be a positive twist map and let $(f_t)_t$ be an isotopy joining the identity to $f$. Then, any well-ordered set with irrational rotation number has null torsion.
\end{teorema}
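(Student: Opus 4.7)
The plan is essentially to invoke Crovisier's original theorem and transport it via the equivalence of the two torsion definitions just established. Since the statement is attributed to Crovisier (\cite{crovisier}, Theorem 1.2) and no substantially new argument is expected here, my proof will be a brief reduction rather than a self-contained dynamical argument.

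First I would recall the key output of the preceding proposition: for any $\bar z\in \mathbb{A}$, any $\xi\in T_{\bar z}\mathbb{A}\setminus\{0\}$ and any $n\in\N$, one has the identity
\begin{equation*}
\theta_n(\xi) \;=\; n\,\mathrm{Torsion}_n\bigl((f_t)_t,\bar z,\xi\bigr).
\end{equation*}
The same identity, with $n$ replaced by $-n$, is obtained by applying the definition of $\theta_n$ for negative indices together with the invariance of the finite-time Béguin torsion under the shift $(\bar z,\xi)\mapsto (f^n(\bar z),Df^n(\bar z)\xi)$. Dividing through by $n$ and letting $n\to\pm\infty$ shows that the Crovisier limit $\lim_{n\to\pm\infty}\theta_n(\xi)/n$ equals the Béguin limit $\mathrm{Torsion}((f_t)_t,\bar z)$ in the sense of Definition \ref{definizione_torsion} (using Proposition \ref{torsione cambia vettore stima} to remove the dependence on $\xi$), whenever either limit exists.

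Next I would apply Crovisier's Theorem 1.2 in its original formulation: for a positive twist map $f$, every well-ordered invariant set with irrational rotation number has $\lim_{n\to\pm\infty}\theta_n(\xi)/n=0$ for every $\bar z$ in the set and every nonzero tangent vector $\xi$. By the identification above, this is exactly the statement that $\mathrm{Torsion}((f_t)_t,\bar z)=0$ for all such $\bar z$, which is the conclusion we want. Independence from the choice of isotopy is guaranteed by Proposition \ref{indipendenza torsione isotopia}.

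The main (and only) obstacle is really a bookkeeping one: one must check that Crovisier's sign conventions, orientation and choice of reference vector field $(0,1)$ match the conventions fixed in Section \ref{torsion twist map}, and that the extension of the identity $\theta_n=n\,\mathrm{Torsion}_n$ to negative $n$ is correct. Once these conventions are aligned, the argument is a one-line transfer, since the hard dynamical content (the estimate on the sum of turning angles along orbits in a well-ordered set with irrational rotation number) is already contained in \cite{crovisier}.
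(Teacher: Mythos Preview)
Your proposal is correct and matches the paper's own treatment: the paper does not give a proof at all but simply recalls Crovisier's theorem, noting in the sentence preceding it that ``since the two definitions of torsion are equivalent, this result holds true also referring to the torsion presented in Definition \ref{definizione_torsion}.'' Your reduction via the identity $\theta_n(\xi)=n\,\mathrm{Torsion}_n((f_t)_t,\bar z,\xi)$ from the preceding proposition is exactly what the paper intends.
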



\appendix
\section{Proof of Proposition \ref{stimaangoli}}\label{appendice prova prop}

We now present the proof of the technical Proposition \ref{stimaangoli}, used in the discussion of case $(i)$ of Theorem \ref{thm intro 1} (see Subsection \ref{sezione prova linking-torsion}). Consider $J_1(s),J_2(s)$ for $s\in I$. By hypothesis $\pi_M\circ J_1=\pi_M\circ J_2$, so they lie on the same tangent space.\\
\noindent Four different cases can occur:
\begin{itemize}
\item[$(1)$]$J_1(s),J_2(s)$ are positively colinear, i.e. $J_1(s)=\lambda J_2(s)$ for some $\lambda>0$. Hence the associated angle function satisfies $\bar{\theta}(s)=0\ mod\,2\pi$ and any continuous determination $\theta$ verifies $\theta(s)=2\pi k,k\in\Z$.
\item[$(2)$]$J_1(s),J_2(s)$ are negatively colinear, i.e. $J_1(s)=\lambda J_2(s)$ for some $\lambda <0$. Hence the associated angle function satisfies $\bar{\theta}(s)=\pi\ mod\,2\pi$ and any continuous determination $\theta$ verifies $\theta(s)=\pi+2\pi k, k\in\Z$.
\item[$(3)$]$J_1(s),J_2(s)$ are linearly independent and $(J_1(s),J_2(s))$ is a direct basis. Therefore the associated angle function satisfies $\bar{\theta}(s)\in(0,\pi)\ mod\,2\pi$ and any continuous determination $\theta$ verifies $\theta(s)\in(2\pi k, \pi+2\pi k), k\in\Z$.
\item[$(4)$]$J_1(s),J_2(s)$ are linearly independent and $(J_1(s),J_2(s))$ is a non-direct basis. Therefore the associated angle function satisfies $\bar{\theta}(s)\in(\pi,2\pi)\ mod\,2\pi$ and any continuous determination $\theta$ verifies $\theta(s)\in(\pi+2\pi k, 2\pi(k+1)),k\in\Z$.
\end{itemize}

We denote as $\bar{\Theta}(s)$ the oriented angle between $Df\circ J_1(s)$ and $Df\circ J_2(s)$.
\begin{lemma}\label{il pi non lo prendo come valore}
Let $I\subset\R$ and let $M,N$ be 2-dimensional oriented Riemannian manifolds. Let $f:M\rightarrow N$ be a local diffeomorphism which preserves the orientation and let $J_1,J_2:I\rightarrow TM$ be continuous functions that never vanish. Assume also that $\pi_M\circ J_1=\pi_M\circ J_2$. Let $\bar{\theta},\bar{\Theta}:I\rightarrow\mathbb{T}$ be the oriented angles, respectively, between the image vectors $J_1,J_2$ and the image vectors $Df\circ J_1,Df\circ J_2$.\\
\noindent Then, for any $s\in I$
\begin{equation}
(\bar{\theta}-\bar{\Theta})(s)\neq\pi\quad mod\,2\pi.
\end{equation}
\end{lemma}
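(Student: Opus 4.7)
The plan is to reduce the lemma to a purely linear algebraic statement: at each point $s \in I$, both $J_1(s)$ and $J_2(s)$ belong to the same tangent space $T_{\pi_M\circ J_1(s)}M$, and the map taking $(J_1(s),J_2(s))$ to $(Df\circ J_1(s), Df\circ J_2(s))$ is obtained by applying the linear isomorphism $Df(\pi_M\circ J_1(s)) : T_{\pi_M\circ J_1(s)}M \to T_{f(\pi_M\circ J_1(s))}N$, which by hypothesis is orientation preserving. So the task reduces to proving that for any orientation-preserving linear isomorphism $L$ between oriented $2$-dimensional Euclidean vector spaces, and any pair of non-zero vectors $(v_1,v_2)$, the oriented angle between $(v_1,v_2)$ and that between $(Lv_1,Lv_2)$ never differ by $\pi$ modulo $2\pi$.

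To verify this I would use exactly the classification already introduced in the case analysis preceding the lemma. Four mutually exclusive cases exhaust all configurations of the ordered pair $(v_1,v_2)$ of non-zero vectors: $(1)$ positively colinear, $(2)$ negatively colinear, $(3)$ $(v_1,v_2)$ is a direct basis, $(4)$ $(v_1,v_2)$ is an indirect basis. Since $L$ is linear, it preserves colinearity and its sign: cases $(1)$ and $(2)$ transfer unchanged to $(Lv_1,Lv_2)$. Since $L$ additionally preserves the orientation, the direct/indirect character of an ordered basis is preserved, so cases $(3)$ and $(4)$ also transfer unchanged to $(Lv_1,Lv_2)$.

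From this one reads off directly that, at the same $s\in I$, the class of $(J_1(s),J_2(s))$ and the class of $(Df\circ J_1(s),Df\circ J_2(s))$ coincide. Consequently $\bar{\theta}(s)$ and $\bar{\Theta}(s)$ lie in the same one of the four subsets of $\mathbb{T}$ listed in the enumeration: $\{0\}$, $\{\pi\}$, the open arc $(0,\pi)$, or the open arc $(\pi,2\pi)$. In each case the difference $\bar{\theta}(s)-\bar{\Theta}(s)$ lies in $(-\pi,\pi)\subset\mathbb{T}$, and in particular is not equal to $\pi\bmod 2\pi$, which is the claim.

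I do not expect any genuine obstacle here: the statement is really the pointwise observation that an orientation-preserving linear isomorphism of an oriented Euclidean plane cannot reverse the sign of the oriented angle between two non-zero vectors (more precisely, cannot send a pair to one that is antipodal in $\mathbb{T}$). The only care needed is to notice that the hypothesis $\pi_M\circ J_1 = \pi_M\circ J_2$ ensures we really apply one single linear map $Df$ at a single point of $M$ to both $J_1(s)$ and $J_2(s)$; without this the statement would be meaningless because $Df\circ J_1(s)$ and $Df\circ J_2(s)$ would not in general belong to the same tangent space of $N$.
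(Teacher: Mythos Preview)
Your argument is correct and is essentially identical to the paper's own proof: both use the four-case classification (positively colinear, negatively colinear, direct basis, indirect basis), observe that the orientation-preserving linear map $Df$ at the common base point preserves each case, and conclude that $\bar{\theta}(s)$ and $\bar{\Theta}(s)$ lie in the same arc of $\mathbb{T}$ so their difference cannot be $\pi$. The only cosmetic difference is that the paper phrases the final step as a contradiction, whereas you argue directly.
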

\noindent We postopone the proof of this lemma.\\
\noindent Let $\theta$ be a chosen continuous determination of the angle $\bar{\theta}$. Let fix $s_0\in I$. Depending on the cases, we have
$$
\theta(s_0)\begin{cases}
=2k\pi\qquad\text{ if }\bar{\theta}(s_0)=0\ mod\,2\pi \\
=2k\pi+\pi\qquad\text{ if }\bar{\theta}(s_0)=\pi\ mod\,2\pi \\
\in (0,\pi)+2k\pi\qquad\text{ if }\bar{\theta}(s_0)\in(0,\pi)\ mod\,2\pi \\
\in (\pi,2\pi)+2k\pi\qquad\text{ if }\bar{\theta}(s_0)\in(\pi,2\pi)\ mod\,2\pi
\end{cases}
$$
where $k\in\Z$.

\noindent Choose a measure $\Theta(s_0)$ of the angle $\bar{\Theta}(s_0)$ such that
$$
\abs{\theta(s_0)-\Theta(s_0)}<\pi
$$

\noindent By the continuity of the chosen determination $\Theta$, from the relation holding in $s_0$ just shown and from Lemma \ref{il pi non lo prendo come valore}, for any $s\in I$ we conclude
$$
\abs{\theta(s)-\Theta(s)}<\pi.
$$

~\newline
\textit{Proof of Lemma \ref{il pi non lo prendo come valore}.} 
As remarked above, only four cases can occur concerning the relative positions of vectors $J_1(s),J_2(s)$ for any fixed $s\in I$.\\
\noindent We then show that for any $s$
$$
\bar{\theta}(s)-\bar{\Theta}(s)\neq\pi\ mod\,2\pi.
$$
Arguing by contradiction, assume that there exists $s$ so that $\bar{\theta}(s)-\bar{\Theta}(s)=\pi\ mod\,2\pi$. Then the couples of vectors $(J_1(s),J_2(s))$ and $(Df\circ J_1(s),Df\circ J_2(s))$ belong to different cases and this is a contradiction. Indeed, since $f$ is a local diffeomorphism which preserves the orientation, looking at the relative position of vectors $Df\circ J_1(s), Df\circ J_2(s)$, the same four cases presented above can occur and for any fixed $s\in I$ we remain in the same case as $J_1(s),J_2(s)$.
\hfill$\Box$

\clearpage
\bibliographystyle{siam}
\bibliography{Bibliography}

\end{document}